\newcommand{\incl}[1][r]{\ar@<-0.2pc>@{^(-}[#1] \ar@<+0.2pc>@{-}[#1]}
\newcommand{\Cl}{\operatorname{Cl}}
\newcommand{\Pic}{\operatorname{Pic}}
\newcommand{\Aut}{\operatorname{Aut}}
\newcommand{\Spec}{\operatorname{Spec}}
\newcommand{\Bl}{\mathrm{Bl}}
\newcommand{\SL}{\mathrm{SL}}
\newcommand{\codim}{\mathrm{codim}}
\newcommand{\X}{\mathcal{X}}
\newcommand{\U}{\mathcal{U}}
\newcommand{\Y}{\mathcal{Y}}
\renewcommand{\O}{\mathcal{O}}
\newcommand{\Gm}{\mathbb{G}_m}
\newcommand{\A}{\mathbb{A}}
\newcommand{\T}{\mathbb{T}}
\renewcommand{\P}{\mathbb{P}}
\newcommand{\Z}{\mathcal{Z}}
\theoremstyle{plain}
\newtheorem{theorem}[subsection]{Theorem}
\newtheorem{lemma}[subsection]{Lemma}
\newtheorem{corollary}[subsection]{Corollary}
\newtheorem{proposition}[subsection]{Proposition}
\newtheorem*{theorem*}{Theorem} 
\theoremstyle{definition}
\newtheorem{example}[subsection]{Example}
\newtheorem{remark}[subsection]{Remark}
\newtheorem{definition}[subsection]{Definition}
\newtheorem{conjecture}[subsection]{Conjecture}
\newtheorem*{conjecture*}{Conjecture \ref{conj}} 
\title[Horospherical stacks]{Horospherical stacks}
\date{\today}
\author{Ariyan Javanpeykar}
\address{Institut f\"ur Mathematik\\
 Johannes Gutenberg-Universit\"at Mainz\\
    Staudingerweg 9 \\
     55099 Mainz, Germany}
\email{peykar@uni-mainz.de}
\author{Kevin Langlois}
\address{Mathematisches Institut\\ 
   Heinrich Heine Universit\"{a}t\\
   40225 D\"{u}sseldorf, Germany}
\email{langlois.kevin18@gmail.com}
\author{Ronan Terpereau}
\address{ Institut de Math\'ematiques de Bourgogne - UMR 5584 du CNRS \\
Univ. Bourgogne Franche-Comt\'e\\
  9 avenue Alain Savary \\
  BP 47870 - 21078 DIJON Cedex, France}
\email{ronan.terpereau@u-bourgogne.fr}
\keywords{Horospherical varieties, toric stacks, flag varieties, equivariant embeddings, quotient stacks.}
\subjclass[2010]
{14L30    
	(14M17, 
	14D23, 
	14M25)}  
\begin{document}

\begin{abstract}  
We prove structure theorems for algebraic stacks with a reductive group action and a dense open substack isomorphic to a horospherical homogeneous space, and thereby obtain new examples of algebraic stacks which are global quotient stacks. Our results partially generalize  the work of Iwanari, Fantechi-Mann-Nironi, and Geraschenko-Satriano for abstract toric stacks.  
\end{abstract}

\maketitle
\tableofcontents

\section{Introduction}
 Several theories of \emph{abstract toric stacks}, i.e., algebraic stacks with a torus action and a dense open substack isomorphic to the torus, have been introduced over the last years; see  \cite{Laf02, BCS05,Iwa09,FMN,Tyo12,GS15I,GS15II,GM}. These stacks admit a simple combinatorial description, via   \emph{stacky fans}, and thus   provide a class of stacks which are easy to handle. Moreover, in some cases they have a natural interpretation in terms of moduli spaces (e.g. as the parameter space of certain tuples of effective Cartier divisors on toric varieties in \cite[Section 7]{GS15II}).  Also, certain toric stacks appear naturally as Mori dream   stacks; see   \cite{ElHo}.

The aim of this paper is to generalize some structure results from the setting of abstract toric stacks to the more general setting of \emph{abstract horospherical stacks}. More precisely, we characterize algebraic stacks with a reductive group action and a dense open substack isomorphic to a horospherical homogeneous space as stacky quotients of horospherical varieties. 
Let us mention that we were first led to investigate this problem by the work of Borisov-Chen-Smith \cite{BCS05}, Iwanari \cite{Iwa09}, Fantechi-Mann-Nironi \cite{FMN}, and Geraschenko-Satriano \cite{GS15I,GS15II} on abstract toric stacks. 

To state our   results, we first   review the basic definitions; see also Section \ref{section:defns}.
Let $k$ be an algebraically closed field  of characteristic zero, and let $G$ be a connected reductive linear algebraic group over $k$. 
A closed subgroup $H$ of $G$ is \emph{horospherical} if it contains a maximal unipotent subgroup of $G$. In this case,  the normalizer $P:= N_G(H)$ of $H$ in $G$  is a parabolic subgroup of $G$ and the quotient $\T:= P/H=\Aut^G(G/H)$ is a torus. A homogeneous space $G/H$ is \emph{horospherical} if $H$ is a horospherical subgroup of $G$. Note that    the natural morphism  $G/H\rightarrow G/P$ is a  Zariski $\T$-torsor over the flag variety $G/P$. 

A \emph{horospherical $G$-variety} $X$ is a normal $G$-variety with an open horospherical $G$-orbit; see for instance \cite{Pas06,Pas08} for a presentation of the theory of horospherical varieties (and their relation to Fano varieties). Horospherical varieties form a subclass of spherical varieties \cite{Pau81, Kno91, Per14} containing both toric varieties and flag varieties. The advantage to working with horospherical varieties is that their combinatorial description is easier than that of a general spherical variety.

Horospherical varieties appear naturally as orbit closures of certain linear representations \cite{PV72}. Moreover, they form a fertile ground to tackle some problems in algebraic geometry such as the Mukai conjecture \cite{Pas10}. The theory of horospherical varieties is also exploited in the (log) minimal model program  \cite{Pa18}, the study of stringy invariants \cite{BM13,LPR}, and quantum cohomology \cite{GPPS}. Thus, it seems reasonable to suspect that a ``stacky'' generalization of the notion of a horospherical variety could be useful in  algebraic geometry.

Simply replacing the word "variety" by the word "stack" in the definition of a horospherical variety, we obtain a first (naive) generalization of the notion of horospherical $G$-variety, as we explain now. We say that a finite type normal algebraic stack $\mathcal X$ over $k$ endowed with a $G$-action is an  \emph{abstract horospherical $G$-stack} if  there is a $G$-stable dense open substack of $\X$ which is $G$-isomorphic to a horospherical homogeneous space $G/H$. If, in addition, the rational  map $\X \dashrightarrow G/P$ induced by the open immersion $G/H \hookrightarrow \X$ is a morphism of stacks, then we say that $\X$ is a \emph{toroidal} abstract horospherical $G$-stack; see Definitions \ref{defn:embeddings} and   \ref{defn:toroidal_embeddings}. We note that  we recover the classical theory of horospherical varieties by considering stacks which are (representable by) varieties.  

Let us note that if $G=\T$ is a torus and $H=\{1\}$, then the (toroidal) abstract horospherical $G$-stacks with a dense open substack $G$-isomorphic to $G/H = \T$ are precisely the \emph{abstract toric stacks} considered in the work of Geraschenko-Satriano \cite{GS15I, GS15II}.
We recall the main result of \cite{GS15II} which is a characterization of certain abstract toric stacks as quotients of toric varieties. (We refer the reader to Remark \ref{remark:gs} for a brief discussion of a  mistake in Geraschenko--Satriano's paper \cite{GS15II} which does not concern the following result.)

\begin{theorem} \label{th GS}
\cite[Theorem 5.2]{GS15II} --- 
If $\X$ is a smooth abstract toric stack such that the diagonal of $\X$ is affine and the geometric points of $\X$ have reductive inertia groups, then $\X$ is equivariantly isomorphic to a quotient stack $[X/K]$, where $X$ is a toric variety with torus $T$ and $K$ is a closed subgroup of $T$. 
\end{theorem}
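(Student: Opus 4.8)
The plan is to imitate the classical construction of a toric variety as a gluing of affine charts indexed by the cones of a fan, but performed $T_0$-equivariantly (where $T_0$ denotes the torus acting on $\X$, with cocharacter lattice $N$) and with the stabilizer data retained, and then to repackage the resulting combinatorial object as a \emph{stacky fan} in the sense of \cite{GS15I}; the quotient presentation $[X/K]$ is then read off from the definition of the stack attached to such a fan. The first step is to show that $\X$ is covered by $T_0$-stable open substacks of the form $[\Spec A/G]$ with $G$ linearly reductive and $\Spec A$ a smooth affine $G$-scheme. This is a stacky analogue of Sumihiro's theorem: since the diagonal of $\X$ is affine and the geometric stabilizers are linearly reductive, $\X$ is \'etale-locally a quotient $[\Spec A/G_x]$ around a point with stabilizer $G_x$, and the $T_0$-action lets one promote such a chart to an honest $T_0$-stable open substack, while smoothness of $\X$ forces $\Spec A$ to be smooth (compare \cite{GS15I}). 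Since $T_0$ acts transitively on the dense open $T_0\subseteq\X$, every nonempty $T_0$-stable open substack of $\X$ contains $T_0$ as a dense orbit, so each chart is again an abstract toric stack with the same dense torus.

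Next I would treat the affine case. Fix a chart $\U=[\Spec A/G]$; its good moduli space $U:=\Spec A^G$ inherits a $T_0$-action, and one checks that $T_0\hookrightarrow\U\to U$ is an open immersion, using that $T_0$ has trivial stabilizer and that $\O(U)=\O(\Spec A)^G$ embeds into the coordinate ring of $T_0$. Thus $U$ is a normal affine toric variety, with cone $\sigma_U$ in $N_\R$. One then shows that, after adjusting the presentation, $G$ may be taken to be a closed — hence diagonalizable — subgroup of the big torus $T_A$ of $\Spec A$ with $T_A/G\cong T_0$; equivalently, $\U$ is the affine toric stack associated with a stacky cone $(\widetilde\sigma\subseteq L_\R,\ \beta\colon L\to N)$, with $\widetilde\sigma$ a smooth cone mapping onto $\sigma_U$ and $\beta$ rationally surjective, so that $G=\ker(T_L\to T_N)\subseteq T_L=T_A$. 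This is the classification of smooth affine toric stacks, which I would take as an input from \cite{GS15I}.

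Finally I would glue the charts. Their good moduli spaces are affine toric varieties, and since forming the good moduli space is compatible with the (toric, hence saturated) open immersions over the overlaps $\U_i\cap\U_j$, they glue to a separated normal toric variety $X_{cs}$ with a fan $\Sigma$ in $N_\R$. Transporting the stacky-cone data through this gluing, the lattices $L_i$ and smooth cones $\widetilde\sigma_i$ assemble — after passing to a common refinement that records all the stacky factors — into a single stacky fan $(\Sigma',\ \beta\colon L\to N)$ with $\Sigma'$ smooth and $\beta$ rationally surjective, and $\X\cong[X_{\Sigma'}/G_\beta]$. Setting $X:=X_{\Sigma'}$, a smooth toric variety with torus $T:=T_L$, and $K:=G_\beta=\ker(T_L\to T_N)$, a closed subgroup of $T$, yields $\X\cong[X/K]$, with the $T_0\cong T/K$-equivariance built into the construction.

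The main obstacle is this last step: one must show that the local presentations $[\Spec A_i/G_i]$ are mutually compatible over overlaps, so that the groups $G_i$ can be identified with a single closed subgroup $K\subseteq T$ and the smooth affine charts $\Spec A_i$ glued into one toric variety $X$. Concretely this amounts to showing that the stacky-cone data is functorial for the toric open immersions of good moduli spaces, which is where most of the combinatorial bookkeeping resides. Subsidiary technical points are the production of the $T_0$-stable affine good-moduli-space charts in the first step and the verification that $T_0$ sits in each good moduli space as an open subscheme.
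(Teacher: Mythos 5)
You should first note that the paper does not prove this statement: it is quoted verbatim from Geraschenko--Satriano \cite[Theorem 5.2]{GS15II} and used as a black box (notably in the proof of Theorem \ref{thm:structure_theorem_for_toroidal}). The comparison below is therefore with the published proof in \emph{loc.\ cit.}, whose strategy the present paper generalizes in Sections \ref{section: toroidification} and \ref{section:towards}.

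Your first two steps do track that proof: the $T_0$-stable open cover by charts $[\Spec A/G_x]$ is \cite[Theorem~4.5]{GS15II} (resting on the local quotient structure now supplied by \cite{AHR}, together with the argument that the reductive stabilizers are in fact tori, cf.\ Lemma \ref{lem:inertia_gps_of_toric_embeddings}), and the identification of each smooth affine chart with the stack of a smooth stacky cone is the classification from \cite{GS15I}. The genuine gap is exactly where you yourself locate ``the main obstacle'': the gluing. This cannot be dispatched as combinatorial bookkeeping of stacky cones. The lattices $L_i$ and groups $G_i$ vary from chart to chart (even in rank), there is no canonical common refinement, and the obstruction to assembling them into a single pair $(L,\beta)$ is cohomological rather than combinatorial. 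Decisive evidence: for non-smooth abstract toric stacks the local structure theorem still yields an open cover by toric stacks, yet the global conclusion fails (\cite{GM}, cf.\ Remark \ref{smoothness required}); since your gluing step invokes smoothness only to make the local cones smooth, it would, if it worked as described, prove this false statement as well. The actual proof glues globally via a Cox-type torsor construction: one chooses line bundles generating the Picard groups of the charts, forms the associated torus torsor over $\X$, extends it over the complement of the open cover's good locus (of codimension at least $2$) using local factoriality of the smooth stack, and then shows the total space is a quasi-affine scheme by a separate quasi-affineness criterion (a smooth abstract toric stack with trivial Picard group containing a big quasi-affine open subscheme is a quasi-affine scheme). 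This is precisely the mechanism the present paper abstracts into Conjecture \ref{conj}, Lemma \ref{lem:surj_of_pic}, Proposition \ref{prop:extending}, and Proposition \ref{prop:conjquotientstack2}, and deploys in the proofs of Theorems \ref{thm2} and \ref{thm3}. Without this step, or a genuine substitute for it, your argument does not close.
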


It is stressed in \cite[Section 5.1]{GS15II} that Theorem \ref{th GS} fails without the assumption on the diagonal and the inertia groups. Moreover, it is shown in \cite{GM} that Theorem \ref{th GS} fails if one drops the smoothness assumption.

Our main results are a characterization of certain abstract horospherical $G$-stacks, without the restriction that $G$ is a torus, as quotients of horospherical varieties; see Theorems \ref{thm1} and \ref{thm2} below.   

The definition of abstract horospherical stacks above is quite natural but it is not the most convenient one to work with in    concrete examples.
Therefore, we introduce another class of stacks that can be studied using combinatorial tools; our goal will be to show that these two definitions coincide, under suitable assumptions. Let $\X$ be an algebraic stack with a $G$-action over $k$. We say that $\X$ is a \emph{horospherical $G$-stack} if there exist a horospherical $G\times T$-variety $X$, where $T$ is a torus acting faithfully on $X$, and a closed subgroup $K$ of $\mathrm{Aut}^{G\times T}(X)$ containing $T$, such that $\X$ is $G$-isomorphic to the stacky quotient $[X/K]$; see Definition \ref{defn:ho_st}. 

We note that the class of horospherical stacks is an intermediate class between the class of horospherical varieties and the class of abstract horospherical stacks. Indeed, it suffices to take $T=K=\{1\}$ for $\X$ to be a horospherical $G$-variety. 

Our first result characterizes toroidal abstract horospherical stacks.

\begin{theorem}\label{thm1}
If $\X$ is a smooth toroidal abstract horospherical $G$-stack such that the diagonal of $\X$ is affine and the geometric points of $\X$ have reductive inertia groups, then $\X$ is a horospherical $G$-stack.
\end{theorem}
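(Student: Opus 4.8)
The plan is to use the toroidal structure to peel off the flag factor $G/P$, reduce to an abstract toric stack over a point, apply Theorem~\ref{th GS} there, and then induce the resulting quotient presentation back up over $G/P$.

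\emph{Step 1 (reduction to the fiber over $G/P$).} Since $\X$ is toroidal there is a $G$-morphism $\pi\colon\X\to G/P$ extending $G/H\hookrightarrow\X$. I would set $\X_0:=\pi^{-1}(eP)$, an algebraic stack acted on by $P=\Stab_G(eP)$, and use the equivalence between $G$-equivariant algebraic stacks over $G/P$ and $P$-equivariant algebraic stacks (descent along the Zariski-locally trivial $P$-torsor $G\to G/P$) to get a $G$-isomorphism $\X\cong G\times^{P}\X_0$. As $G\to G/P$ is Zariski-locally trivial, $\X$ is, Zariski-locally on $G/P$, of the form $\X_0\times V$, so $\X_0$ inherits smoothness, affineness of the diagonal, and reductivity of the geometric inertia groups. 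The open substack of $\X_0$ is the fiber of $G/H\to G/P$ over $eP$, namely $P/H=\T$; and $H\trianglelefteq P$ acts trivially on this dense open $\T\subseteq\X_0$, hence trivially on $\X_0$ (it is reduced and separated), so the $P$-action on $\X_0$ factors through $\T=P/H$. Thus $\X_0$ is a smooth abstract toric $\T$-stack with affine diagonal and reductive inertia, and Theorem~\ref{th GS} provides a $\T$-equivariant isomorphism $\X_0\cong[X_0/K]$, where $X_0$ is a toric variety with torus $T$ and $K\subseteq T$ a closed subgroup with $T/K\cong\T$.

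\emph{Step 2 (lifting the parabolic action and inducing up).} The $P$-action on $[X_0/K]$ factors through $\T=T/K$, while $T$ itself acts on $X_0$; so I would form the fiber product $\tilde P:=P\times_{\T}T$ of the maps $P\to\T$ and $T\to\T$. It is a linear algebraic group with $\tilde P\to P$ surjective with central kernel $\cong K$ and $\tilde P\to T$ surjective with kernel $\cong H$; via $\tilde P\to T$ it acts on $X_0$, and this action descends to the $P$-action on $\X_0=[X_0/K]$. Viewing $\tilde P$ as a closed subgroup of $P\times T\subseteq G\times T$, I would then set
$$X:=(G\times T)\times^{\tilde P}X_0 .$$
Since $\tilde P$ acts freely on $G\times T$ by right translation and $X_0$ is covered by $T$-stable (hence $\tilde P$-stable) affine charts, $X$ is a variety, and it is smooth because $X_0$ is (smoothness of $\X_0=[X_0/K]$ lifts to its $K$-torsor $X_0$). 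Its open $(G\times T)$-orbit is $(G\times T)\times^{\tilde P}X_0^{\circ}$, where $X_0^{\circ}\cong T$ is the open torus orbit of $X_0$; as $\tilde P\to T$ is onto with kernel $H\times\{1\}$, this orbit is $(G\times T)/(H\times\{1\})\cong(G/H)\times T$, a horospherical homogeneous space for $G\times T$ (it contains a maximal unipotent subgroup $U\times\{1\}$ of $G\times T$). Hence $X$ is a horospherical $G\times T$-variety, $T$ acts faithfully on it, and $\Aut^{G\times T}(X)=N_{G\times T}(H\times\{1\})/(H\times\{1\})=\T\times T$ contains the acting torus $T$.

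\emph{Step 3 (identification of the quotient).} Finally I would check $[X/T]\cong\X$ as $G$-stacks, where $T=\{1\}\times T\subseteq G\times T$. Quotienting $X=[(G\times T)\times X_0/\tilde P]$ by the left $T$-action collapses the $T$-coordinate of $G\times T$, giving $[X/T]\cong[G\times X_0/\tilde P]$ with $\tilde P$ acting on $G$ through $\tilde P\to P\subseteq G$ and on $X_0$ through $\tilde P\to T$; since $K=\ker(\tilde P\to P)$ acts trivially on $G$, this equals $[G\times[X_0/K]/(\tilde P/K)]=[G\times\X_0/P]=G\times^{P}\X_0\cong\X$. So $\X$ is $G$-isomorphic to $[X/T]$, which exhibits $\X$ as a horospherical $G$-stack (with $K=T$ in the notation of Definition~\ref{defn:ho_st}).

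\emph{Main obstacle.} The delicate point is Step 2: transporting the parabolic action across the stacky $K$-quotient produced by Theorem~\ref{th GS} and realizing the result as an honest \emph{variety} $X$ rather than a stack. The key device is the fiber product $\tilde P=P\times_{\T}T$ together with the closed embedding $\tilde P\hookrightarrow G\times T$, which makes the associated bundle representable; one must also verify that GS's isomorphism, stated only $\T$-equivariantly, carries the full $P$-action, which holds because that action factors through $\T$. The remaining verifications — representability and normality of $X$, the description of its open orbit, and the quotient computation in Step 3 — are routine.
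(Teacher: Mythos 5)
Your proof is correct, and its first half is exactly the paper's: the reduction $\X\cong G\times^{P}\X_0$ via descent along the Zariski-locally trivial $P$-torsor $G\to G/P$ is Corollary \ref{cor:para ind}/Lemma \ref{lem:gen_toroidal} (including the observation that $H$ acts trivially on $\X_0$ so the action factors through $\T$), and the application of Theorem \ref{th GS} to the fiber is the same. Where you diverge is the reconstruction step. The paper does not build the presenting variety directly over $G/P$; instead it proves Lemma \ref{lem: X horo if X/T horo} ($\X$ is a horospherical $G$-stack if and only if $[\X/\T]$ is), observes that $[\X/\T]\cong G/P\times[X_0/T]\cong[(G/P\times X_0)/T]$ is visibly a horospherical $G$-stack, and then recovers a presenting variety for $\X$ abstractly as the fiber product $Z=(G/P\times X_0)\times_{[\X/\T]}\X$, a $\T$-torsor over a variety and hence itself a horospherical variety. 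Your Steps 2--3 replace this with an explicit associated-bundle construction $X=(G\times T)\times^{\tilde P}X_0$ for the fiber-product group $\tilde P=P\times_{\T}T$, followed by the two-step quotient computation $[X/T]\cong[(G\times X_0)/\tilde P]\cong G\times^{P}[X_0/K]$. Both routes are valid and rest on the same two inputs (Borel--Tits local triviality and Geraschenko--Satriano); the paper's buys a reusable lemma (Lemma \ref{lem: X horo if X/T horo} is invoked again for Theorems \ref{thm2} and \ref{thm3}) and outsources representability to the general fact that affine-group torsors over schemes are schemes, whereas yours is more constructive --- it hands you the pair $(X,T)$ explicitly --- at the cost of checking by hand the representability of the associated bundle (via Sumihiro's $T$-stable affine charts) and the descent of the $P$-action through the stacky $K$-quotient, both of which you handle correctly.
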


We push our methods a bit further and obtain a general structure result for smooth (not necessarily toroidal) abstract horospherical stacks, under suitable assumptions. Namely, to prove our main result, we require the following conjecture (see also Section \ref{section:towards}). We will say that an open substack $\Y$ in a stack $\X$ is \emph{big} if its complement has codimension at least  $2$.  Moreover,  a linear algebraic group is \textit{diagonalizable} if it is a subgroup of a torus.

\begin{conjecture}[Criterion for  {quasi-affineness}]\label{conj} Let $G$ be a connected reductive algebraic group.
Let $ \X$ be a smooth integral finite type algebraic stack over $k$ with affine diagonal, $\Pic(\X)=0$, and diagonalizable inertia groups. Suppose that $ \X$ contains a big open substack $\Y$. If $\Y$ is a (smooth) quasi-affine scheme and $\X$ is an abstract horospherical $G$-stack, then $\X$ is a quasi-affine scheme. 
\end{conjecture}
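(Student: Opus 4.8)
The plan is to reduce the conjecture to a vanishing assertion for the inertia of $\X$, and then to prove that assertion using the horospherical $G$-action together with the hypothesis $\Pic(\X)=0$. Set $A:=\Gamma(\X,\O_\X)$ and $V:=\Spec A$. Since $\X$ is smooth, hence normal, and $\Y$ is big in $\X$, restriction of functions gives an isomorphism $A\xrightarrow{\ \sim\ }\Gamma(\Y,\O_\Y)$, so the canonical morphism $\varphi\colon\X\to V$ restricts over $\Y$ to the morphism $\Y\to\Spec\Gamma(\Y,\O_\Y)$, which is an open immersion because $\Y$ is quasi-affine. To prove the conjecture it suffices to show that $\varphi$ is an open immersion, and since this already holds over the big open $\Y$ I would split the task into two steps: (i) the inertia stack $I_\X\to\X$ is trivial, so that $\X$ is an algebraic space; and (ii) the resulting morphism $\varphi\colon\X\to V$ of algebraic spaces is an open immersion (so $\X$ is a scheme, in fact quasi-affine).

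The heart of the matter is (i). The morphism $I_\X\to\X$ is affine (as $\X$ has affine diagonal) with diagonalizable geometric fibres, hence it is the relative spectrum over $\X$ of the group-algebra construction applied to a constructible sheaf $\mathscr M$ of finitely generated abelian groups (the sheaf of character groups of the inertia); because $\Y$ is a scheme we have $\mathscr M|_\Y=0$, and because $\X\smallsetminus\Y$ has codimension $\geq 2$ the support of $\mathscr M$ — the stacky locus of $\X$ — is a $G$-stable closed substack of codimension $\geq 2$ (in particular there is no codimension-one root-stack phenomenon to contend with, that being excluded by the bigness of $\Y$). To show $\mathscr M=0$ I would: (a) invoke an equivariant local structure theorem for abstract horospherical $G$-stacks, of Sumihiro/Luna type, presenting $\X$ locally for the $G$-action as a stacky quotient $[W/D]$ with $W$ a simple horospherical variety and $D$ a diagonalizable group acting faithfully; (b) observe that $D$ acts freely wherever $\X$ meets $\Y$, so that nontriviality of $\mathscr M$ at a geometric point $x$ means precisely that the stabilizer $D_x$ is nontrivial; and (c) derive a contradiction from $\Pic(\X)=0$: for a quotient of a smooth variety by a diagonalizable group with a fixed point one has a split surjection $\Pic\twoheadrightarrow\widehat{D_x}$, given by restriction to the residual gerbe $BD_x$ at $x$, so one must produce a line bundle on all of $\X$ restricting to a nontrivial character of $D_x$. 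This last point is where the horospherical structure enters — a connected reductive group acting on $\X$ supplies an abundant collection of $G$-linearized line bundles, and such a bundle extends across the codimension $\geq 2$ stacky locus by normality — and it forces $D_x=\{1\}$ for all $x$, hence $\mathscr M=0$ and $\X$ is an algebraic space.

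For step (ii), once (i) is known, $\X$ is a normal algebraic space of finite type with affine diagonal carrying a horospherical $G$-action and containing the big quasi-affine open $\Y$. I would then argue combinatorially via the Luna--Vust classification of horospherical embeddings: $\X$ corresponds to a colored fan, $\Y$ to the subfan obtained by deleting the $G$-orbits in $\X\smallsetminus\Y$, which — having codimension $\geq 2$ — contribute no rays and no colors; quasi-affineness of $\Y$ says its colored fan is supported on a single colored cone, and $\Pic(\X)=0$ forces the full colored data of $\X$ to be again of this quasi-affine type, whence $\varphi$ is an open immersion and $\X$ is a quasi-affine scheme. (Alternatively one checks directly that the affine $\O_\X$-localizations form a basis of the topology of $\X$, extending the corresponding statement on $\Y$; some input like $\Pic(\X)=0$ is genuinely needed here even for schemes, as $\P^2\smallsetminus\{\mathrm{pt}\}$ shows.)

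I expect the main obstacle to be step (c) of (i): converting the \emph{local} nontriviality of the inertia into a \emph{nontrivial global} line bundle on $\X$. All the hypotheses are essential for this — $[\A^2/\mu_2]$ shows that $\Pic(\X)=0$ cannot be omitted (its divisorial analogues, the root stacks along a divisor, being excluded instead by the bigness of $\Y$) — and it is precisely the horospherical structure that one hopes will guarantee enough global equivariant line bundles to detect the stackiness. This is a purity-type statement in the spirit of known results on the resolution property of smooth stacks and on gerbes over smooth bases, which is what makes the conjecture believable; but the passage from the local picture to a global obstruction class in $\Pic(\X)$ is exactly the point at which a complete proof is presently missing.
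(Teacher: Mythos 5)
This statement is not a theorem of the paper: it is stated and used as Conjecture \ref{conj}, and the paper's main results (Theorems \ref{thm2} and \ref{thm3}) are conditional on it or on special cases of it. So there is no ``paper's own proof'' to match; what the paper supplies are partial results (Lemma \ref{lem:conjquotientstack0} with Remark \ref{cor:first_case}, Proposition \ref{prop:conjquotientstack2}, and Corollaries \ref{cor:second_case} and \ref{cor:third_spec_case}). Your proposal correctly isolates where the difficulty lies and honestly says the key step is missing --- which is consistent with the statement being open --- but as a proof it has a genuine gap, and it is worth naming precisely.

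The gap is step (c) of your part (i): the claimed ``split surjection $\Pic(\X)\twoheadrightarrow\widehat{D_x}$ given by restriction to the residual gerbe'' is false as a general purity statement, and the paper contains a counterexample. In Remark \ref{remark:kresch1}, Kresch's stack $[W/A_5]$ is smooth, integral, of finite type, has affine diagonal, $\Pic=0$, finite \emph{abelian} (hence diagonalizable) inertia groups supported in codimension $\geq 2$, and a big quasi-affine open substack --- yet it is not an algebraic space. So every hypothesis you use in (i) is satisfied there except horosphericity, and the local nontriviality of the inertia produces no nontrivial class in $\Pic$. Your sketch acknowledges that the $G$-action must supply the missing global line bundles, but gives no mechanism for it; that mechanism is exactly what is unknown. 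Where the paper does carry out your local-to-global step, it does so only for stacks already presented as global quotients $[X/K]$ with $K$ diagonalizable: Lemma \ref{lem:chars_inject} is your fixed-point injection $\chi(K)\hookrightarrow\Pic([X/K])$, but applied to a \emph{global} presentation; Lemma \ref{lem:conjquotientstack0} gets $\chi(K)\cong\Cl(U)=0$ from $K$-factoriality and \cite[Proposition 2.7]{HS10}; and Proposition \ref{prop:conjquotientstack2} uses the Brion--Luna--Vust local structure theorem plus the Knop--Kraft--Vust exact sequence $\left(\O(Z_1)^\times/k^\times\right)^K\to\chi(K)\to\mathrm{H}^1_{\mathrm{alg}}(K,\O^\times)$ to kill the stabilizers. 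All of these need the global quotient presentation and control of $\O(X)^\times$, which an abstract horospherical stack does not come with.

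Two smaller points. First, in step (i) the description of $I_\X\to\X$ as the relative spectrum of a group algebra on a constructible sheaf of character groups is only valid locally in general; this is repairable but should not be asserted globally without argument. Second, step (ii) is not automatic either: to invoke the Luna--Vust colored-fan description you need $\X$ to be a separated normal \emph{scheme}, whereas (i) only gives an algebraic space; the passage from algebraic space to scheme is handled in the paper for horospherical quotient stacks via Sumihiro's theorem (Lemma \ref{lem:algspace_is_scheme}) and, for more general spaces, requires an algebraicity/schematicity input of the kind the paper attributes to Hausen. In summary: your outline is a reasonable reduction of the conjecture and matches the paper's diagnosis of the problem, but it does not prove the statement, and the single step you flag as missing is precisely the open content of the conjecture.
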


To prove our next result, we use   Theorem \ref{thm1}, the theory of Cox rings  of horospherical varieties, and Conjecture \ref{conj}.

\begin{theorem}\label{thm2}
 Assume that Conjecture \ref{conj} holds. 	If $\X$ is a smooth  abstract horospherical $G$-stack with dense open substack $G/H$ such that the diagonal of $\X$ is affine, the geometric points of $\X$ have reductive inertia groups, and the natural (right) action of the torus $\T=P/H$ on $G/H$ extends to $\X$, then $\X$ is a horospherical $G$-stack.
\end{theorem}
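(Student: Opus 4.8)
The goal is: given a smooth abstract horospherical $G$-stack $\X$ with dense open $G/H$, affine diagonal, reductive inertia, and such that the $\T = P/H$-action on $G/H$ extends to $\X$, conclude $\X$ is a horospherical $G$-stack (i.e., $[X/K]$ for a horospherical $G \times T$-variety $X$ and $K \supseteq T$ closed in $\Aut^{G\times T}(X)$). The strategy should mirror the Geraschenko–Satriano approach to toric stacks, bootstrapping from the toroidal case (Theorem \ref{thm1}) via a Cox-ring/canonical-stack construction, with Conjecture \ref{conj} supplying the one missing piece.

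**Step-by-step approach.**

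\begin{enumerate}
\item \textbf{Reduce to the case $\Pic(\X) = 0$ and $\X$ quasi-affine-like.} First I would pass to a "Cox cover" of $\X$. Using that $\X$ has affine diagonal and reductive (in particular, after the canonical-stack reduction, diagonalizable — see the toroidal analysis) inertia, I expect one can construct a $\Gm^r$-torsor (or a torsor under a diagonalizable group $Q$) $\widehat{\X} \to \X$ with $\Pic(\widehat{\X}) = 0$, built from a basis of line bundles generating $\Pic(\X)$, compatibly with the $G \times \T$-action. This is the stacky analogue of passing from a horospherical variety to the spectrum of its Cox ring. The $G$-action and the extended $\T$-action lift to $\widehat{\X}$, and $\widehat{\X}$ is again a smooth abstract horospherical stack for the group $G$ (with an enlarged torus of automorphisms $\T \times Q$).
\item \textbf{Identify a big toroidal (hence quasi-affine) open substack.} On the Cox cover, the boundary $\widehat{\X} \setminus (G/H \times Q)$ should decompose into irreducible divisors (the "colors" and the "$G$-stable boundary divisors" in horospherical language). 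Removing the non-toroidal boundary divisors — those not dominating $G/P$ — yields a big open substack $\Y \subseteq \widehat{\X}$ which \emph{is} toroidal. By Theorem \ref{thm1}, $\Y$ is a horospherical $G$-stack; combined with $\Pic(\widehat\X) = 0$ and a Cox-ring computation for horospherical varieties (the Cox ring of a horospherical variety is a polynomial-type/finitely generated factorial ring, cf.\ the cited theory), $\Y$ should in fact be representable by a smooth quasi-affine scheme.
\item \textbf{Apply Conjecture \ref{conj}.} Now $\widehat{\X}$ is a smooth integral finite type algebraic stack with affine diagonal, $\Pic(\widehat\X) = 0$, diagonalizable inertia, containing the big open substack $\Y$ which is a smooth quasi-affine scheme, and $\widehat{\X}$ is an abstract horospherical $G$-stack. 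Conjecture \ref{conj} then forces $\widehat{\X}$ to be a quasi-affine scheme $X$.
\item \textbf{Descend back along the torsor.} Since $\widehat{\X} = X$ is an actual (quasi-affine, normal, horospherical) $G \times (\T \times Q)$-variety and $\X = [\widehat{\X}/Q] = [X/Q]$ with $Q$ diagonalizable containing... — here I would set $T$ to be the torus acting (a torus surjecting onto the relevant diagonalizable groups), arrange $K \supseteq T$ inside $\Aut^{G \times T}(X)$, and read off from the construction that $\X \cong [X/K]$ $G$-equivariantly. One must check $T$ acts faithfully on $X$ and $X$ is a horospherical $G \times T$-variety (normality is preserved; the open $G\times T$-orbit comes from $G/H \times Q$), which is exactly the content of Definition \ref{defn:ho_st}.
\end{enumerate}

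**Main obstacle.** The genuinely hard analytic input is already isolated as Conjecture \ref{conj}, so modulo that, the main technical obstacle is Step 1–2: producing the Cox cover $\widehat\X \to \X$ equivariantly and verifying that after removing the non-toroidal boundary the remaining big open $\Y$ is genuinely a \emph{quasi-affine scheme} (not merely a quasi-affine stack). This requires the structure theory of Cox rings of horospherical varieties — that colors and $G$-boundary divisors generate the class group with explicit relations, and that inverting the right coordinates gives something with trivial stabilizers and Picard group — together with care that the torsor construction is compatible with both the $G$-action and the extended $\T$-action. The interplay "$\T$-action extends $\Longrightarrow$ we may reduce to the toroidal situation on a cover" is the conceptual crux: extending the $\T$-action is what lets us see the non-toroidal colors as coming from an honest torus quotient, so that after passing to $\widehat\X$ they can be absorbed and Theorem \ref{thm1} becomes applicable.
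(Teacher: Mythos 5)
Your overall architecture --- find a big open substack which is a quasi-affine scheme inside a cover of $\X$ with trivial Picard group, apply Conjecture \ref{conj}, and descend along a torsor under a diagonalizable group --- is indeed the paper's strategy. But there is a genuine gap at the step where you produce the big open substack. You propose to obtain $\Y$ by \emph{removing the non-toroidal boundary divisors} (the colors). Removing divisors leaves a complement of codimension $1$, so the resulting open substack is \emph{not} big, and Conjecture \ref{conj} (whose whole point is the codimension $\geq 2$ hypothesis) cannot be applied to it. The paper does something different: it keeps all divisorial orbits and removes only the $G$-orbit closures of codimension $\geq 2$ (there are finitely many orbits by Corollary \ref{cor: finite G orbits}, so this is a closed substack of codimension $\geq 2$). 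The resulting big open $\Y$ has only orbits of codimension $\leq 1$, and the key fact you are missing is Proposition \ref{prop:codimension_one_case}: such a stack is \emph{automatically toroidal} --- its toroidification is quasi-finite, hence an isomorphism by Zariski's Main Theorem --- so the toroidal structure theorem applies to $\Y$ with the colors still present.

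Two further deviations are worth flagging. First, the hypothesis that the $\T$-action extends is used in the paper at the very start, via Lemma \ref{lem: X horo if X/T horo}, to replace $\X$ by $[\X/\T]$ and reduce to the case $H=P$; this simplifies the orbit structure and forces $K_1=\T_1$ in the description $\Y\cong[Y_1/\T_1]$ of the big open. Your proposal keeps $\T$ around as extra automorphisms, which loses these simplifications. Second, you build the ``Cox cover'' of the stack $\X$ directly in your Step 1, before knowing anything about $\X$; a priori one does not know that $\Pic(\X)$ is finitely generated or that its generators are $G$-linearizable, so this construction is not justified as stated. The paper instead performs the Cox construction on an honest smooth projective variety (a $G'$-equivariant compactification $Y_2$ of $Y_1$), where finite generation and torsion-freeness of $\Cl(Y_2)$, the equality $\O(Y_2)^\times=k^\times$, and the lifting of the group action are available from the literature, and only then extends the resulting torsor from the big open $\Y$ to all of $\X$ using the uniqueness of extensions of ($G''$-linearized) line bundles across codimension $\geq 2$ (Proposition \ref{prop:extending} and Corollary \ref{cor:extending2}). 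Reordering your steps along these lines, and replacing ``remove the colors'' by ``remove the small orbits and invoke Proposition \ref{prop:codimension_one_case}'', would repair the argument.
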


Using the techniques employed in the proof of Theorem \ref{thm2}, we also obtain the following result.

\begin{theorem}\label{thm3}
Let $\X$ be a smooth abstract horospherical $G$-stack such that the natural (right) action of the torus $\T=P/H$ on $G/H$ extends to $\X$. If $\X$ admits an open covering by horospherical $G$-stacks, then $\X$ is a horospherical $G$-stack. 
\end{theorem}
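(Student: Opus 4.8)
The plan is to reduce Theorem \ref{thm3} to Theorem \ref{thm2} (or to the methods behind it) by a gluing argument on the open covering. First I would fix an open covering $\X = \bigcup_{i} \U_i$ where each $\U_i$ is a horospherical $G$-stack, so that by Definition \ref{defn:ho_st} there are horospherical $G\times T_i$-varieties $X_i$ with $T_i$ acting faithfully and closed subgroups $K_i \subseteq \Aut^{G\times T_i}(X_i)$ containing $T_i$ with $\U_i \cong [X_i/K_i]$. Since the open dense horospherical $G$-orbit of $\X$ is unique, all the $\U_i$ share the same dense open substack $G$-isomorphic to $G/H$, and the extended $\T$-action on $\X$ restricts to the $\T$-action on each $\U_i$; this pins down, for each $i$, the torus $T_i$ and the quotient data compatibly, up to the combinatorial identifications coming from the theory of horospherical embeddings. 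The key point I want to extract is that being a horospherical $G$-stack is detected by having affine diagonal, reductive (indeed diagonalizable up to the flag-variety directions) inertia, and the extended $\T$-action — exactly the hypotheses of Theorem \ref{thm2} minus Conjecture \ref{conj} — so the real content is that the covering lets us bypass the conjecture.

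Next I would argue that the hypotheses of Theorem \ref{thm2} that are \emph{not} assumed in Theorem \ref{thm3} — namely that the diagonal of $\X$ is affine and the geometric points have reductive inertia — can be checked locally on the covering, hence hold automatically. Affineness of the diagonal is local on the target: $\Delta_\X$ is affine iff $\Delta_\X|_{\U_i \times \U_i}$ is affine for all $i$, and each $\U_i = [X_i/K_i]$ with $X_i$ a variety and $K_i$ affine has affine diagonal. Likewise the inertia group of a geometric point of $\U_i = [X_i/K_i]$ is a stabilizer subgroup of $K_i$, hence affine, and in the horospherical setting the stabilizers are extensions of subgroups of the torus $T_i$ by subgroups of $\Aut^G(G/H') = $ subtori, so they are diagonalizable, in particular reductive. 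Therefore $\X$ satisfies \emph{all} the hypotheses of Theorem \ref{thm2}, but now I must avoid invoking Conjecture \ref{conj}: the point of having the covering is that the quasi-affineness input that the conjecture was supplying in the proof of Theorem \ref{thm2} is instead provided directly by the local models $X_i$, which are honest quasi-affine (even affine, on suitable further refinement) schemes after removing the stacky structure. Concretely, I would reread the proof of Theorem \ref{thm2}, isolate the single place where Conjecture \ref{conj} is used (to conclude that a certain big open substack with trivial Picard group, affine diagonal and diagonalizable inertia is a quasi-affine scheme), and observe that on the covering this stack is glued from the quasi-affine schemes underlying the $X_i$, so the conclusion holds unconditionally.

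The main obstacle I anticipate is the compatibility of the local presentations: a priori the $X_i$ and $K_i$ for different $i$ need not be restrictions of a single global $(X, K)$, and one must check that the combinatorial data (colored fans / the torus $T$ and subgroup $K$) glue. I would handle this by passing to the colored-fan description of horospherical embeddings: each $\U_i$ corresponds to a colored fan $\Sigma_i$ in the relevant vector space together with the stacky data $(T_i, K_i)$ refining it, and the fact that the $\U_i$ are open substacks of one stack $\X$ sharing the dense orbit $G/H$ forces the $\Sigma_i$ to be subfans of one fan $\Sigma$ and the stacky data to be the restriction of one global stacky structure; gluing the $[X_i/K_i]$ along their overlaps then produces the desired horospherical $G\times T$-variety $X$ with $X = \bigcup X_i'$ and a single closed subgroup $K \subseteq \Aut^{G\times T}(X)$ containing $T$ with $\X \cong [X/K]$. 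A secondary technical point is ensuring that the extended $\T$-action on $\X$ is compatible with these gluings, i.e. that the torus $\T = P/H$ lands appropriately inside each $\Aut$; this should follow from the uniqueness of the $\T$-action on $G/H$ together with the fact that $G/H$ is dense in every chart. Once the global $(X, K)$ is constructed, $\X \cong [X/K]$ is a horospherical $G$-stack by definition, completing the proof.
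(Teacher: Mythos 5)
Your overall strategy --- run the proof of Theorem \ref{thm2} and replace the appeal to Conjecture \ref{conj} by a verification on the given covering --- is indeed the paper's strategy, and your observation that affineness of the diagonal and reductivity of the inertia groups can be checked locally on the covering is correct and needed (the combined statement in Section \ref{section:towards} assumes these explicitly). But there is a genuine gap at the crucial step. You write that the quasi-affineness which Conjecture \ref{conj} was supplying ``is instead provided directly by the local models $X_i$, which are honest quasi-affine schemes after removing the stacky structure,'' and that one need only ``observe'' that the relevant stack is glued from these. This begs the question: the object to which the conjecture is applied in the proof of Theorem \ref{thm2} is a torus torsor $X\to\X$ obtained by extending a Cox-construction torsor across a codimension-two locus, and its restriction over each chart $\U_i$ is again only a \emph{horospherical $G$-stack} of the form $[X_i'/K_i']$, not a scheme. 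The entire difficulty is to show that such a quotient stack --- given that it has trivial Picard group and contains a big quasi-affine open subscheme --- is actually a scheme, i.e.\ that $K_i'$ acts freely. This is Proposition \ref{prop:conjquotientstack2} (Conjecture \ref{conj} for smooth horospherical $G$-stacks), whose proof is a substantive argument using the local structure theorem for spherical varieties, a splitting $Z\cong Z_1\times Z_2$ of the slice, Luna's slice theorem, and the Knop--Kraft--Vust exact sequence identifying $\chi(K)$ with a class group. Nothing in your proposal supplies this; ``removing the stacky structure'' is precisely the conclusion to be proved, not an input.

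Your fallback plan of gluing the local presentations $(X_i,K_i)$ via colored fans into a single global pair $(X,K)$ is a different route from the paper's and carries its own unaddressed obstruction: the presentations $[X_i/K_i]$ are far from unique, the tori $T_i$ and subgroups $K_i$ can genuinely differ from chart to chart, and there is no a priori global torus $T$ or global $K$ of which they are all restrictions. Asserting that sharing the dense orbit ``forces'' the stacky data to glue is not a formality and is not justified. The paper avoids this entirely: it builds a single global object from the big open substack $\Y\subseteq\X$ on which all orbits have codimension at most one (Proposition \ref{prop:codimension_one_case}), passes through the Cox ring of an equivariant compactification to produce one torsor $X\to\X$, and only uses the hypothesis of a covering by horospherical $G$-stacks at the very end, through Corollary \ref{cor:second_case}.
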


To prove Theorem \ref{thm3}, we first establish  that Conjecture \ref{conj} holds true for horospherical $G$-stacks (Proposition \ref{prop:conjquotientstack2}). Our motivation to prove Theorem \ref{thm3} comes  from the proof of Theorem \ref{th GS} (by Geraschenko-Satriano) for a smooth abstract toric stack $\X$. Indeed, the proof Theorem \ref{th GS} consists of  first showing the existence of an open covering of $\X$ by toric stacks \cite[Theorem 4.5]{GS15II} and then an analogue of Theorem \ref{thm3} in the setting of smooth abstract toric stacks.

In Section \ref{section:lemmas} we recall some properties of   group actions on algebraic stacks, and include several properties of the normalization of an algebraic stack (which might be of independent interest). Next, we   define the class of abstract horospherical stacks and the subclass of horospherical stacks in Section \ref{section:defns}, and prove that abstract horospherical stacks have diagonalizable inertia groups using Luna's \'etale slice theorem for algebraic stacks \cite{AHR}, under reasonable assumptions (see Proposition \ref{prop:inertia_groups_are_always_diag}). 
Our first result (Theorem \ref{thm1}) is proven in Section \ref{section:toroidal}; see Theorem  \ref{thm:structure_theorem_for_toroidal}. Our proof reduces Theorem \ref{thm1} to the main result of Geraschenko-Satriano (Theorem \ref{th GS}) on abstract toric stacks, and therefore also relies crucially on Luna's \'etale slice theorem for algebraic stacks as proven by Alper-Hall-Rydh \cite{AHR}.  

As an application of our abstract structure results we construct \emph{toroidifications} of abstract horospherical $G$-stacks (Proposition \ref{prop:tor}) and we prove that abstract horospherical $G$-stacks have finitely many $G$-orbits (Corollary \ref{cor: finite G orbits}). Moreover, we show that, if the $G$-orbits of $\X$ are of codimension at most 1, then $\X$ is a smooth horospherical $G$-stack; see Proposition \ref{prop:codimension_one_case} for a precise statement. We then use all the previous results together with facts on Cox rings of horospherical varieties and Conjecture \ref{conj} to prove Theorems \ref{thm2} and \ref{thm3} in Section \ref{section:towards}.

\begin{remark}
Throughout this paper we assume that the base field $k$ is algebraically closed of characteristic zero. The essential reason for this restriction on the characteristic is that our main results rely on the results of Geraschenko-Satriano \cite{GS15II} where the base field is assumed to be algebraically closed of characteristic zero.  
\end{remark}	

\begin{remark}
Part of our results could easily be extended to the setting of spherical varieties. However, in several places we use in a crucial way the particular features of horospherical varieties (e.g. to construct the toroidification in Proposition \ref{prop:tor} or to reduce to the toric case in the proof of Theorem \ref{thm1}). 
\end{remark}

\begin{remark}
Let us mention that Wedhorn considers  \emph{spherical spaces} in \cite{Wed}. These are families of spherical varieties over arbitrary base schemes. This is another generalization of the notion of a spherical variety which is different from ours since for us the base scheme is $\Spec k$. 
On the other hand, in \cite{Hau00} Hausen considers complex analytic spaces with a $G$-action and a dense open orbit $G$-isomorphic to a spherical homogeneous space. He then  obtains a criterion for algebraicity. This criterion  applies in our situation for abstract horospherical stacks which are algebraic spaces.  
\end{remark}

\noindent \textbf{Conventions.}
Throughout this article, we let $k$ be an algebraically closed field of characteristic zero. A variety (over $k$) is an integral separated finite type scheme over $k$. An algebraic group (over $k$) is a finite type group scheme over $k$. A linear algebraic group (over $k$) is an affine algebraic group. By a subgroup, we always mean an algebraic closed subgroup.
We use the conventions of the Stacks Project \cite[Tag 026N]{stacks-project} for algebraic stacks.

\section{Group actions on algebraic stacks}\label{section:lemmas}

In this section we gather presumably well-known properties of group actions on algebraic stacks.

\subsection{Stack-theoretic images}\label{section:stack_image}
Let $f:\X \to \Y$ be a morphism of finite type algebraic stacks over $k$. 
We define the \textit{stack-theoretic image} of $f:\X\to \Y$ to be the   closed substack $\Z$ of $\Y$ whose ideal is   the kernel of the natural morphism $\mathcal O_{\mathcal Y}\to f_\ast \mathcal{O}_\X$.  This  coincides with the scheme-theoretic image if $\X$ and $\Y$ are schemes  \cite[Tag 01R6]{stacks-project}.

Note that   $f$ factors through $\Z$. Moreover,  for any other closed substack $\Z'$ of $\Y$ such that $f$ factors through $\Z'$, we have $\Z \subseteq \Z'$.  By \cite[Tag 01R8]{stacks-project}, the induced morphism $\X \to \Z$   is dominant. Also, it follows from the minimality of $\Z$ that, if $\X$ is reduced, then $\Z$ is a reduced algebraic stack.   

\subsection{Orbits of group actions}  \label{section: group actions}
Let $\X$ be an algebraic stack over $k$, and let $G$ be a group scheme over $k$. We say that $\X$ is a \emph{$G$-stack} (over $k$) if  $G$ acts on the groupoid $\mathcal X$; see \cite[Definition~1.3.(i)]{Rom05}. Note that, if $G'\to G$ is a morphism of group schemes over $k$, then any $G$-stack naturally inherits the structure of a $G'$-stack. A morphism of $G$-stacks $\mathcal X\to \mathcal Y$ is the data of a morphism of $G$-groupoids $\mathcal X\to \mathcal Y$; see  \cite[Definition~1.3.(ii)]{Rom05}.

\begin{lemma}\label{lem:singular_locus}
	The singular locus of a $G$-stack is a $G$-stable closed substack.
\end{lemma}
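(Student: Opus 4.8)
The plan is to reduce the statement to a standard fact about smooth loci and then to exploit the fact that $G$ acts on $\X$ by automorphisms. First I would recall that the singular locus of a finite type algebraic stack over a field is a well-defined closed substack $\X^{\mathrm{sing}} \subseteq \X$: this can be checked on a smooth presentation $U \to \X$, where $U$ is a scheme, since smoothness of $U \to \X$ together with smoothness (over $k$) is a local property, and the preimage of $\X^{\mathrm{sing}}$ in $U$ is the singular locus of $U$, which is closed. Thus $\X^{\mathrm{sing}}$ is a closed substack, and it is intrinsic (independent of the presentation).

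Next I would use the action. Writing $\sigma : G \times \X \to \X$ for the action morphism and $\mathrm{pr}_2 : G \times \X \to \X$ for the projection, $G$-stability of a closed substack $\Z \subseteq \X$ means precisely that $\sigma^{-1}(\Z) = \mathrm{pr}_2^{-1}(\Z)$ as closed substacks of $G \times \X$. So it suffices to show this equality for $\Z = \X^{\mathrm{sing}}$. The key point is that both $\sigma$ and $\mathrm{pr}_2$ are smooth morphisms: $\mathrm{pr}_2$ is smooth because $G$ is smooth over $k$ (it has characteristic zero, so $G$ is smooth), and $\sigma$ is smooth because it is the composite of the isomorphism $(g,x) \mapsto (g, g^{-1}\cdot x)$ of $G \times \X$ with $\mathrm{pr}_2$ — equivalently, $\sigma$ becomes $\mathrm{pr}_2$ after an automorphism of the source. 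Since the formation of the singular locus commutes with smooth morphisms (the preimage of the singular locus under a smooth morphism is the singular locus of the source), we get $\sigma^{-1}(\X^{\mathrm{sing}}) = (G \times \X)^{\mathrm{sing}} = \mathrm{pr}_2^{-1}(\X^{\mathrm{sing}})$, which is exactly the desired $G$-stability.

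The main obstacle, such as it is, is bookkeeping the precise meaning of ``$G$-stable closed substack'' in the stacky setting and checking that the singular locus really is preserved under base change along smooth morphisms of stacks; both are routine once one works on a common smooth atlas, but they do require a small amount of care with $2$-categorical language. I would phrase the argument so that everything is checked after pulling back to a smooth scheme atlas $U \to \X$, reducing to the classical statements that (i) the singular locus of a finite type $k$-scheme is closed and (ii) it pulls back correctly under smooth morphisms of schemes; the stacky statement then follows by descent.
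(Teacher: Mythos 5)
Your argument is correct, and it packages the proof somewhat differently from the paper. The paper works pointwise: for a fixed element $g$ of $G$, translation by $g$ is an automorphism of $\X$, and pulling back a presentation $P\to\X$ along it shows (since regularity is local in the smooth topology) that $x$ is regular if and only if $gx$ is; stability of the singular locus under each $k$-point of $G$ then gives the claim. You instead treat the action morphism $\sigma\colon G\times\X\to\X$ all at once: both $\sigma$ and $\mathrm{pr}_2$ are smooth (for $\sigma$, note the shearing automorphism you want is $(g,x)\mapsto(g,g\cdot x)$, whose inverse is the map you wrote down --- this is immaterial), and since formation of the singular locus commutes with smooth pullback, both $\sigma^{-1}(\X^{\mathrm{sing}})$ and $\mathrm{pr}_2^{-1}(\X^{\mathrm{sing}})$ equal the singular locus of $G\times\X$, hence coincide. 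Both proofs ultimately rest on the same fact, namely that regularity is smooth-local and is checked on an atlas; what your version buys is the scheme-theoretic equality $\sigma^{-1}(\X^{\mathrm{sing}})=\mathrm{pr}_2^{-1}(\X^{\mathrm{sing}})$ of closed substacks of $G\times\X$ in one stroke, rather than stability under each $k$-point of $G$ separately (the two notions agree here since $k$ is algebraically closed and $G$ is reduced, but yours is the cleaner formulation of ``$G$-stable''). Your preliminary step, that the singular locus is an intrinsic closed substack definable on any smooth atlas, is the same as the paper's implicit use of \emph{loc.\ cit.}
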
  
\begin{proof}
	Let $x$ be an object of a $G$-stack $\X$. Let $g$ be an element of $G$. Let $P\to \X$ be a presentation. Note that multiplication by $g$ induces an automorphism $g:\X\to\X$. Hence, pulling-back along $P\to \X$ induces an isomorphism $P'\to P$. Note that $P'\to \X$ is a presentation as well. If $p'\in P'$ is a point lying over $x'$ which maps to a point $p\in P$ lying over $x := gx'$, then $p'$ is regular if and only if $p$ is regular. Since regularity is local in the smooth topology, we conclude that $x$ is regular if and only if $x'$ is regular. This concludes the proof.  
\end{proof}

Let $m:G\times \mathcal X\to \mathcal X$ be a $G$-action on the algebraic stack $\X$ and let $x \in \X(k)$ be a $k$-point of $\mathcal X$. We denote by $\overline{G . x}$ the stack-theoretic image of the morphism $G\to \mathcal X$ obtained as the composition 
 \[ G=\xymatrix{ G\times_{k} \Spec k \ar[rr]^{\mathrm{id}_G \times x} & &  G\times_{k} \mathcal X \ar[rr]^{m} & & \mathcal X}.\] 
 
Note that, for all  $x$ in $\mathcal X(k)$,  the stack-theoretic image $\overline{G.x}$ of $x$ in $\mathcal X$ is a $G$-stable closed substack of $\mathcal X$.  Suppose that $G$ is irreducible. Then, as the morphism $G\to \overline{G . x}$ is dominant and $G$ is irreducible , it follows that $\overline{G . x}$ contains a dense irreducible constructible substack. Therefore, if $G$ is irreducible, then $\overline{G.x}$ is irreducible and reduced, hence integral.

Note that the codimension of $\overline{G.x}$ in $\X$ is well-defined.  Moreover, as the codimension of a substack in a finite type algebraic stack $\X$ over $k$ is bounded from above by the dimension of a smooth surjective presentation $R \to \X$,   the maximum of    $\mathrm{codim}(\overline{G.x},\X)$, as $x$ runs over $\X(k)$, is well-defined.  This will be used in Sections \ref{section: toroidification} and \ref{section:towards}.

For $x$ in $\X(k)$, we have a morphism

\[G\to \X\times_k \X, \quad g\mapsto (x,gx).\]
We pull-back the diagonal $\Delta:\X\to \X\times_k \X$ along this morphism and obtain a Cartesian diagram
\[
\xymatrix{
H \ar[rr] \ar[d] & & \X \ar[d]^{\Delta} \\
G \ar[rr] &  & \X\times_k \X	
}
\] We refer  to  $H$ as the \emph{stabilizer (group scheme)} of $x$. (We emphasize that if $\X$ is a $G$-stack and $x$ is an object of $\X(k)$, then  the inertia group of $x$ in $\X$ does not coincide with the stabilizer of $x$ in general.)
Note that $H$ is not necessarily a subgroup of $G$. However, $H$ is a group scheme over $k$. Indeed, if $S$ is a scheme over $k$, then the $S$-objects of $H$ are pairs $(g,a)$ with $g$ in $G(S)$ and $a:gx\to x$ an isomorphism in $\X(S)$ (where we consider $x$ as an object of $\X(S)$ via the functor $\X(k) \to \X(S)$). Now, let us define $(g,a). (g',a')$, where $(g,a)$ and $(g',a')$ are objects of $H(S)$. Define $g'':= g  g'$ in $G(S)$. Moreover, let $a'': gg'x \to gx\to x$ be defined as $a'$ multiplied with $g$ and composed with $a$. (Here we use that $G$ also acts on the morphisms in $X$.) Then, we define $(g,a) . (g',a') := (g'',a'')$. In particular, the morphism $H\to G$ given by $(g,a)\mapsto g$ is a homomorphism. Let $K$ be its image. The closed subscheme $K$ of $G$ is a subgroup scheme  and the morphism $H\to K$ is faithfully flat.

Note that $H$   acts on $G$  (via $H\to G$). This action is not necessarily free. Indeed, note that, if $(1,a)$ is an element of $H(k)$, then $a$ is an object of the inertia group $I_x$ of $x$. Now,  since $(g,a)g' = gg'$,   the element $(1,a)$   acts trivially for all $a$ in the inertia group of $x$. Conversely, any $a$ in $I_x$ gives an element $(1,a)$ of $H(k)$. We see that the kernel of the action of $H$ on $G$ is naturally the inertia group of $x$.

Now, as $H $ maps to $K$ equivariantly for the action on $G$,   there is a natural $G$-equivariant morphism of algebraic stacks 
\[ [G/H] \to G/K.\] 
 
We will refer to $[G/H]$ {as \emph{the (stack-theoretic structure of the) orbit of $x$ in $\X$}}.   Note that the morphism $G\to \overline{G . x}$ is $H$-invariant. Therefore, there is a natural   morphism  $[G/H]\to \overline{G . x}$. Since the morphism $G\to \overline{G . x}$ is dominant, the morphism $[G/H]\to \overline{G . x}$ is dominant. 
 We will say that the $G$-stack $\X$ has a finite number of $G$-orbits if the set (of $k$-isomorphism classes of objects of) $\X(k)$ is a finite union of $G$-orbits.

\subsection{Normalization and equivariant resolution of the indeterminacy locus}
 Let $\X$ be a finite type     algebraic stack over $k$. Our discussion of the normalization of $\X$ closely follows \cite[Appendix A]{AB}.
 A morphism of algebraic stacks   $\X'\to \X$ is a \textit{normalization (of $\X$)} if, for all smooth morphisms $U\to \X$ with $U$ a scheme, the 
 scheme $U\times_{\X} \X'$ is the normalization of $U$.
 
 If $X$ is an integral affine   scheme, say $X=\Spec A$, then the normalization of $X$ is given by 
 $\Spec B$ with $B$ the integral closure of $A$ in its field of fractions. More generally, if $\X$ is an algebraic stack, then one can construct a normalization morphism $\X'\to \X$ following the analogous construction for algebraic spaces given  in \cite[Tag 07U4]{stacks-project}. Moreover, by adapting the arguments in \cite[Tag 0BB4]{stacks-project} for algebraic spaces, it follows that $\X'$ is normal and unique up to unique isomorphism.
 Furthermore,  for all   normal integral algebraic stacks $\Y$ over $k$ and for 
 all dominant morphisms $\Y\to \X$ with $\X$ an integral algebraic stack  there is a   morphism $\Y\to \X'$ such that $\Y\to \X$ factors as $\Y\to \X'\to \X$.  Note that normal algebraic stacks behave like normal schemes in many ways. For instance, 	a finite type integral normal algebraic stack over $k$ is nonsingular in codimension one.
 
 \begin{lemma}\label{lem:finite_index}
 	Let $f:\X\to \Y$ be a quasi-finite   representable morphism of finite type algebraic stacks over $k$. Then, for all $x$ in $\X(k)$, the image of the inertia group $I_x$ in $I_{f(x)}$ is of finite index.
 \end{lemma}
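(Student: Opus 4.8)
The plan is to reduce the statement to a concrete computation on smooth atlases by exploiting the representability and quasi-finiteness of $f$. First I would choose a smooth surjective morphism $V \to \Y$ with $V$ a scheme, and form the fiber product $U := \X \times_\Y V$; since $f$ is representable, $U$ is an algebraic space, and after refining by a smooth atlas of $U$ we may assume $U$ is a scheme with a smooth surjective morphism $U \to \X$. The induced morphism $g : U \to V$ is then a quasi-finite morphism of finite type schemes over $k$, and we obtain a $2$-commutative square relating $U \to \X$ and $V \to \Y$. Fixing $x \in \X(k)$ and lifting it to a point $u \in U(k)$ over it (with image $v \in V(k)$), the key point is that the smooth atlas morphisms induce, on automorphism/inertia groups, exact sequences: the inertia group $I_x$ sits in an exact sequence coming from the groupoid presentation $R = U\times_\X U \rightrightarrows U$, namely $I_x$ is the stabilizer-type group measuring the failure of $U \to \X$ to be a monomorphism at $u$, and similarly for $I_{f(x)}$ relative to $V \to \Y$.

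The heart of the argument is the following local picture. The morphism $g : U \to V$ is quasi-finite, so the fiber $g^{-1}(v)$ is a finite set of points; write $u = u_1, u_2, \dots, u_n$ for those points. The inertia group $I_{f(x)}$ of $f(x)$ in $\Y$ acts (via the $\Y$-groupoid structure, i.e. via the action of $R_\Y = V \times_\Y V$) on the fiber of $U \to \Y$ over $f(x)$; concretely, elements of $I_{f(x)}$ permute and relate the points lying over $f(x)$ compatibly with the map to $V$, and the stabilizer of the component corresponding to $u$ maps onto (a finite-index-complement of) the contribution of $I_x$. More precisely, I would set up the exact sequences
\[
1 \longrightarrow I_x \longrightarrow \Stab_{R}(u) \longrightarrow \Aut(\text{local ét. data at } u) ,
\]
and use that, since $f$ is representable and quasi-finite, the relative inertia $I_{\X/\Y}$ is a \emph{finite} group scheme over $\X$: representability gives $I_{\X/\Y}$ trivial? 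No — representability of $f$ means exactly that $I_x \to I_{f(x)}$ is injective for all $x$ (a morphism of stacks is representable iff it induces monomorphisms on inertia). So in fact the map $I_x \to I_{f(x)}$ is a closed immersion, and what remains is to bound the index, i.e. to show $I_{f(x)}/I_x$ is finite.

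To control the index, observe that $I_{f(x)}$ acts on the scheme-theoretic fiber $U_{f(x)} := U \times_\Y f(x)$, which by quasi-finiteness of $g$ (hence of $U \to \Y$, since $V \to \Y$ is of finite type and $g$ is quasi-finite — combine with finiteness of fibers of smooth morphisms after base change to a point... actually $U\to \Y$ is quasi-finite as the composite of quasi-finite $U\to \X$? not smooth; rather use $U \to V \to \Y$ with $U\to V$ quasi-finite and $V\to \Y$ smooth, so fibers of $U\to\Y$ over a geometric point are quasi-finite over fibers of $V\to\Y$, hence of dimension... this needs the atlas chosen carefully). Granting that $U_{f(x)}$ has finitely many points, the subgroup $I_x \subseteq I_{f(x)}$ is the stabilizer of the point $u$ in this finite set (together with its residual automorphisms, which agree since $f$ is representable), so $I_{f(x)}/I_x$ injects into the finite set of points of $U_{f(x)}$, giving finite index. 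I expect the \textbf{main obstacle} to be the bookkeeping needed to make the atlas $U \to \X$ and the induced $U \to V$ simultaneously nice enough (quasi-finite over $V$, with a point over $x$ whose fiber over $f(x)$ is genuinely finite), and to correctly identify the $I_{f(x)}$-action on $U_{f(x)}$ with the coset action of $I_{f(x)}/I_x$; the representability of $f$ is the crucial input that turns the a priori homomorphism $I_x \to I_{f(x)}$ into a monomorphism, after which only finiteness of the relevant fiber is needed.
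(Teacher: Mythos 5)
Your proposal has the right two ingredients in mind --- representability of $f$ forces $I_x \to I_{f(x)}$ to be a monomorphism, and quasi-finiteness should bound the index by embedding the coset space into a finite fiber --- but the step that is supposed to produce a \emph{finite} fiber does not work as you have set it up, and you essentially admit this in your parenthetical aside. If you base change along a smooth atlas $V \to \Y$ and form $U = \X \times_\Y V$, then the fiber of $U \to \Y$ over the point $f(x)$ is only quasi-finite over the fiber of $V \to \Y$ over $f(x)$; the latter is a smooth scheme of positive dimension whenever $\Y$ is not itself a scheme of the same dimension as $V$ (and in fact always has dimension at least $\dim I_{f(x)}$), so $U_{f(x)}$ has infinitely many points in general. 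No choice of smooth atlas repairs this: the whole point is that you must base change along the \emph{point} $f(x)$ itself, not along a smooth cover of $\Y$. Consequently the claimed injection of $I_{f(x)}/I_x$ into a finite set is not established, and this is the entire content of the lemma.

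The paper's argument (following Geraschenko--Satriano) does exactly the base change you need. Since $k$ is algebraically closed, the residual gerbe of $\Y$ at $f(x)$ is trivial, giving a stabilizer-preserving morphism $BG \to \Y$ with $G = I_{f(x)}$. Pulling back $f$ along $BG \to \Y$ and then along the universal $G$-torsor $\Spec k \to BG$ yields $U := \Spec k \times_{BG}(BG \times_\Y \X) \cong \Spec k \times_\Y \X$, which by representability is an algebraic space and by quasi-finiteness and finite type of $f$ is $\Spec A$ for a zero-dimensional finite type $k$-algebra $A$ --- i.e.\ a genuinely finite scheme. Then $BG \times_\Y \X = [U/G]$, the inertia group $H$ of the relevant point of $[U/G]$ is the stabilizer of the corresponding $u \in U(k)$, and the orbit map identifies $(G/H)(k)$ with a subset of the finite set $U(k)$. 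This is the missing finiteness in your argument; everything else you say (injectivity from representability, the coset-space interpretation) is correct and consistent with the paper's proof.
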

 \begin{proof}
 	We follow the proof of \cite[Proposition 3.2]{GS15II}.  Let $G = I_{f(x)}$. Since $k$ is algebraically closed, the residual gerbe of $\mathcal Y$ at $f(x)$ is trivial. Therefore,  we have a stabilizer-preserving morphism $BG\to \mathcal Y$ \cite[Definition 2.10]{Alp10}. Since stabilizer-preserving morphisms are stable under base-change, it suffices to show that the morphism $BG \times_{\mathcal Y}\mathcal X\to BG$ induces finite index inclusions on inertia groups. 
 	
 	Let $\Spec k \to BG$ be the universal $G$-torsor over $BG$. Let $U = \Spec k\times_{BG} (BG\times_{\mathcal Y} \mathcal X)$ and note that $U$ is a $G$-torsor over $BG\times_{\mathcal Y} \mathcal X$. Moreover, since $f$ is quasi-finite and finite type, it follows that $U$ is quasi-finite and finite type over $\Spec k$. It follows that $U = \Spec A$, where $A$ is a zero-dimensional finite type $k$-algebra. 
 	
 	Let $H$ be the inertia group of a point in $BG\times_{\mathcal Y}\X$. Note that $H$ is the stabilizer of a point $u$ in $U$ (with respect  to the action of $G$). Therefore, the set $(G/H)(k)$ identifies with  a subset of $U(k)$. Since $U$ is finite over $k$, we conclude that $G/H$ is finite.
 	 \end{proof}

 \begin{lemma}\label{lem:normalization} Let $\X$ be a finite type integral algebraic stack over $k$.
 	The normalization   $\X'\to \X$ is a representable proper quasi-finite birational surjective morphism. Moreover, for all $x'$ in $\X'(k)$ with image $x$ in $\X(k)$, the image of the inertia group $I_{x'}$ in $I_x$ is of finite index. 
 \end{lemma}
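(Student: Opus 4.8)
The plan is to verify each property of the normalization morphism $\nu\colon\X'\to\X$ by reducing to the well-known case of schemes via a smooth presentation, and then to deduce the statement on inertia groups directly from Lemma \ref{lem:finite_index}.

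First I would choose a smooth surjective morphism $p\colon U\to\X$ with $U$ a scheme; since $\X$ is reduced and $p$ is smooth, $U$ is reduced, and we may take $U$ of finite type over $k$. By the defining property of the normalization recalled above, $U':=U\times_\X\X'\to U$ is the normalization of $U$. As a finite type $k$-scheme, $U$ is excellent (in particular Nagata), so the normalization $U'\to U$ is a \emph{finite} morphism; hence it is representable, proper, quasi-finite and surjective, and it restricts to an isomorphism over the open normal locus $U^{\mathrm{nm}}\subseteq U$, which is dense because it contains every generic point of $U$. Now representability by algebraic spaces, properness, quasi-finiteness and surjectivity are all fppf-local on the target, and $p$ is a smooth (a fortiori fppf) cover; since $U'\to U$ is the base change of $\nu$ along $p$, it follows that $\nu$ is representable, proper, quasi-finite and surjective.

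For birationality, note that normality is smooth-local on the source, so for either projection $\mathrm{pr}_i\colon U\times_\X U\to U$ of the groupoid presenting $\X$ one has $\mathrm{pr}_i^{-1}(U^{\mathrm{nm}})=(U\times_\X U)^{\mathrm{nm}}$; hence $U^{\mathrm{nm}}$ is stable under the groupoid and descends to a dense open substack $\X^{\mathrm{nm}}\subseteq\X$, over which $\nu$ becomes an isomorphism (again because being an isomorphism is fppf-local on the target, and $\nu^{-1}(\X^{\mathrm{nm}})\to\X^{\mathrm{nm}}$ pulls back along $U^{\mathrm{nm}}\to\X^{\mathrm{nm}}$ to the identity of $U^{\mathrm{nm}}$). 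Thus $\nu$ is birational, and $\X'$ is integral, being normal and containing the dense open integral substack $\X^{\mathrm{nm}}$. This proves the first assertion. For the second, we have just shown that $\nu\colon\X'\to\X$ is a quasi-finite representable morphism of finite type algebraic stacks over $k$, so Lemma \ref{lem:finite_index} applies and yields that for every $x'\in\X'(k)$ with image $x\in\X(k)$ the image of $I_{x'}$ in $I_x$ has finite index.

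The only point that requires genuine care is the descent bookkeeping in the second and third paragraphs: one must check that each property invoked (representability by algebraic spaces, properness, quasi-finiteness, surjectivity, being an isomorphism) is stable under smooth, or fppf, base change on the target, and that the normal locus of a presentation descends to an open substack because normality is smooth-local on the source. Once the finiteness of the normalization of a finite type $k$-scheme is granted, everything else is formal, so I do not expect a serious obstacle beyond this routine verification.
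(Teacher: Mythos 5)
Your proof is correct and follows essentially the same route as the paper: pull back along a smooth presentation, use finiteness of normalization for finite type $k$-schemes to get representability, properness, quasi-finiteness and surjectivity by descent, exhibit a dense open over which $\nu$ is an isomorphism for birationality, and invoke Lemma \ref{lem:finite_index} for the inertia groups. The only (cosmetic) difference is that you descend the normal locus of the presentation to an open substack $\X^{\mathrm{nm}}$, whereas the paper picks a dense open of the presentation by hand and pushes it down; your version is if anything slightly cleaner.
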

 \begin{proof} Let $P\to \X$ be a smooth surjective morphism with $P$ a scheme.
 	Note that $P':=P\times_{\X} \X'$ is the normalization of $P$ and thus a scheme. Therefore, the normalization morphism  is representable by  \cite[Tag 04ZP]{stacks-project}.  Since, $P'\to P$ is finite surjective, it follows that $\X'\to \X$ is proper quasi-finite  and surjective; see \cite[Tag 02LA and Tag 02KV]{stacks-project}. To see that $\X'\to \X$ is birational, let $V'\subseteq P'$ be a dense open which is isomorphic (via $P'\to P$) to some dense open $V$ of $P$. The image $U'$ of $V'$ in $\X'$ is open and maps to the image $U$ of $V$ in $\X$. Since the morphism $U'\to U$ is an isomorphism after pull-back along the cover $V\to U$, the morphism $U'\to  U$ is an isomorphism, so that $\X'\to \X$ is birational. The last statement    follows from Lemma \ref{lem:finite_index}.  
 \end{proof}

\begin{remark}  
	The normalization morphism of an algebraic stack is not necessarily stabilizer-preserving. Indeed, let $C$ be the nodal cubic curve given by the equation $y^2 = x^3+x^2$ over $k$. Note that $C$ is stable with respect to the action of $\mu_2$ on $\mathbb A^2$ given by $(x,y)\mapsto (x,-y)$, and that the singular point $(0,0)$ of $C$ is fixed by this action. In particular, it defines a stacky point (with inertia group $\mu_2$) of the quotient stack $\mathcal{X}:=[C/\mu_2]$. Consider the action $t\mapsto -t$ of $\mu_2$ on $\mathbb A^1$. Now, the normalization morphism $\mathbb A^1\to C$ is given by $t\mapsto (t^2-1,t(t^2-1))$, and  it is $\mu_2$-equivariant. Moreover, the fibre over the singular point $(0,0)$ of $C$ consists of precisely two points: $1$ and $-1$. Their stabilizers are trivial. In particular, the corresponding inertia groups in the quotient stack $\mathcal{X}':=[\mathbb A^1/\mu_2]$ are trivial. This shows that the normalization morphism $\mathcal{X}'\to \X$ is not stabilizer-preserving. 
\end{remark}

\begin{remark} The normalization morphism of an algebraic stack does not preserve commutativity of the inertia groups.  Indeed, let $C$ be  $Z(x^2+y^2+z^2, x+y+z)$ in $\mathbb{A}^3$, and note that the symmetric group $S_3$ acts on $C$ by permuting the coordinates $x$, $y$, and $z$.  The stack $\mathcal{X} := [C/S_3]$ has precisely one stacky point. The inertia group of this stacky point is $S_3$ (hence non-abelian). However,   if $\mathcal{X}'\to \mathcal{X}$ is the normalization morphism, then $\X'$ has a unique stacky point, and the inertia group of this stacky point is $\mathbb{Z}/3\mathbb{Z}$.
\end{remark}

We now show that the indeterminacy locus of a rational map from an algebraic stack to a proper scheme can be resolved. 

\begin{proposition}\label{prop:diagram1}
	Let $\X$ be a normal algebraic $G$-stack of finite type over $k$ and let $\U$ be a $G$-stable dense open substack of $\X$. Let $Y$ be a  proper scheme over $k$ with a $G$-action. If  $\X\dashrightarrow Y$ is a $G$-equivariant rational map which is defined on $\U$, then  
  there exists a representable    proper birational surjective morphism of normal algebraic $G$-stacks $\X'\to \X$ which is an isomorphism over $\U$ such that the composed $G$-equivariant rational map $\X' \to\X \dashrightarrow   Y$ is defined everywhere. Moreover, $\X'\to \X$ induces finite index inclusions on inertia groups.
	 
\end{proposition}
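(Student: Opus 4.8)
The plan is to resolve the indeterminacy of $\X \dashrightarrow Y$ by taking $\X'$ to be (the normalization of) the closure of the graph of the rational map, and then check that this closure inherits all the required properties. First I would consider the open substack $\U$ on which the map $f_\U : \U \to Y$ is defined, and form the morphism $(\iota, f_\U) : \U \to \X \times_k Y$, where $\iota : \U \hookrightarrow \X$ is the open immersion. Let $\mcZ$ be the stack-theoretic image of this morphism (in the sense of Section \ref{section:stack_image}); since $\U$ is reduced and integral, $\mcZ$ is an integral closed substack of $\X \times_k Y$, and it is $G$-stable because the morphism $(\iota, f_\U)$ is $G$-equivariant (using that $Y$ carries a $G$-action and $f_\U$ is $G$-equivariant) and stack-theoretic images of $G$-equivariant morphisms are $G$-stable. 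Let $\X' \to \mcZ$ be the normalization (Lemma \ref{lem:normalization}); the $G$-action lifts to the normalization by the universal property of normalization for dominant morphisms of integral stacks recalled in Section \ref{section:lemmas}. Define $\X' \to \X$ as the composite $\X' \to \mcZ \to \X \times_k Y \to \X$ of the normalization, the closed immersion, and the first projection.

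Next I would verify the asserted properties of $\X' \to \X$ one by one. The projection $\X \times_k Y \to \X$ is proper since $Y$ is proper over $k$, hence $\mcZ \to \X$ is proper (closed immersion followed by proper), and $\X' \to \mcZ$ is proper by Lemma \ref{lem:normalization}, so $\X' \to \X$ is proper. For birationality and the isomorphism over $\U$: over $\U$ the map is a morphism, so the graph restricts over $\U$ to the image of the section $\U \to \U \times_k Y$, which is a closed immersion onto a substack isomorphic to $\U$; since $\U$ is already normal, the normalization $\X'$ agrees with this over $\U$, giving an isomorphism $\X'|_\U \xrightarrow{\sim} \U$ — this also shows $\X' \to \X$ is birational and, since it is proper with dense image, surjective. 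For representability: $\mcZ \to \X$ is a closed immersion composed with $Y \times_k \X \to \X$; the latter is representable since $Y$ is a scheme, so $\mcZ \to \X$ is representable, and $\X' \to \mcZ$ is representable by Lemma \ref{lem:normalization}, hence $\X' \to \X$ is representable. A representable proper morphism which is birational (an isomorphism on a dense open) is in particular quasi-finite on a dense open; combined with properness one can check it is quasi-finite everywhere, or more directly, the finite-index statement on inertia groups follows from Lemma \ref{lem:finite_index} once we know $\X' \to \X$ is representable and quasi-finite — and quasi-finiteness holds because $\mcZ \to \X$ is proper birational representable between integral normal-in-codimension-one stacks, so Zariski's main theorem considerations apply after pulling back to a smooth presentation. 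Finally, the composite $\X' \to \mcZ \hookrightarrow \X \times_k Y \xrightarrow{\mathrm{pr}_2} Y$ is an everywhere-defined $G$-equivariant morphism extending $f_\U$, since $\mcZ$ is by construction a substack of the product and the projection is a genuine morphism.

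The main obstacle I expect is the bookkeeping around representability and quasi-finiteness of $\X' \to \X$, and the compatibility of all constructions with the $G$-action at the level of groupoids rather than just coarse spaces. Concretely: one must check that the stack-theoretic image of a $G$-equivariant morphism is again a $G$-stack (not merely that its support is $G$-stable), that normalization of an integral $G$-stack is canonically a $G$-stack, and that the projection $\X \times_k Y \to \X$ and the closed immersion $\mcZ \hookrightarrow \X \times_k Y$ are $G$-equivariant in the groupoid sense. These are all "morally clear" but require invoking the functorial descriptions in \cite{Rom05} and the universal properties; I would handle them by reducing to smooth presentations, where the corresponding statements for schemes (scheme-theoretic image, normalization) are standard and equivariance descends. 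The finite-index claim on inertia groups is then immediate from Lemma \ref{lem:normalization} applied to $\X' \to \mcZ$ together with Lemma \ref{lem:finite_index} applied to the representable quasi-finite morphism $\mcZ \to \X$ (note $\mcZ \to \X$ is an isomorphism over $\U$, hence quasi-finite, hence — being proper — quasi-finite everywhere by properness plus genericity of quasi-finiteness for birational proper representable maps).
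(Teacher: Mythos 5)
Your construction is the same as the paper's: take the closure $\mcZ=\overline{\Gamma}$ of the graph of $\U\to Y$ inside $\X\times_k Y$, normalize, and compose with the first projection; the verifications of properness, representability, birationality, surjectivity, and the everywhere-definedness of the extended map via the second projection all match the paper and are fine.

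However, your justification of the finite-index statement on inertia groups contains a genuine error. You claim that $\mcZ\to\X$ is quasi-finite, arguing that it is ``an isomorphism over $\U$, hence quasi-finite, hence --- being proper --- quasi-finite everywhere.'' This is false: generic quasi-finiteness of a proper birational morphism does not propagate to all points, and indeed the fibre of $\overline{\Gamma}\to\X$ over a point of indeterminacy is typically positive-dimensional (think of a blow-up, which is exactly what resolving indeterminacy produces). Worse, if your claim were true, then a proper quasi-finite representable birational morphism onto a normal stack would be an isomorphism by Zariski's Main Theorem, so the rational map $\X\dashrightarrow Y$ would already be defined everywhere --- which would make the proposition vacuous and is clearly absurd in examples. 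The correct (and easier) argument, which the paper uses, is that the projection $\X\times_k Y\to\X$ is \emph{stabilizer-preserving} because $Y$ is a scheme (its inertia is trivial, so the inertia of $\X\times_k Y$ at $(x,y)$ is just $I_x$), and the closed immersion $\overline{\Gamma}\hookrightarrow\X\times_k Y$ is a monomorphism, hence also induces isomorphisms on inertia. Thus the only contribution to the finite-index statement comes from the normalization $\X'\to\overline{\Gamma}$, which is handled by Lemma \ref{lem:normalization}; Lemma \ref{lem:finite_index} is not needed for the projection step at all.
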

\begin{proof} As $Y$ is a proper scheme over $k$, the morphism $\X\times_k Y\to \X$
obtained by base change is   proper and stabilizer-preserving.
Let $\Gamma \subseteq \mathcal U\times_k Y$ be the graph of the $G$-equivariant morphism $\mathcal U\to Y$.  We let $\overline{\Gamma}\subseteq \mathcal X \times_k Y$ be the closure of $\Gamma$ in $\mathcal X\times_k Y$. Moreover, let $\X'$ be the normalization of $\overline{\Gamma}$.   Note that the normalization map $\mathcal X' \to \overline{\Gamma}$ is a representable quasi-finite proper birational   morphism of $G$-stacks which induces finite index inclusions on inertia groups (Lemma \ref{lem:normalization}).  
 It follows that the composed morphism 
	\[
	\mathcal X' \to \overline{\Gamma} \to \mathcal X\times_k Y \to \X
	\] is  a representable   proper birational (surjective) morphism which induces finite index inclusions on inertia groups.
	Also, as the composed $G$-equivariant rational map $\X' \to\X \dashrightarrow   Y$ coincides over a dense open substack of $\X'$ with the composed morphism $\X' \rightarrow \X\times_{k} Y \rightarrow Y$, this concludes the proof of the proposition.
\end{proof}

\subsection{Algebraic stacks over homogeneous spaces}\label{section:par_ind}
Let $G$ be a linear algebraic group over $k$, let $P$ be a closed  subgroup of $G$, and let $\Y$ be a finite type $P$-stack. We define the algebraic stack $ G\times^P  \Y$ to be $(G\times \Y)/P$, where $P$ acts on $G\times \Y$ via 
$$ p \cdot  (g,y):=(gp^{-1},p \cdot  y) \;\; \text{ for all $p \in P$, $g \in G$, $y \in Y$}.$$
Note that $G$ acts on $G\times^P \Y$ by left multiplication on the first factor. Also, the projection $G\times \Y\to G$ induces a $G$-equivariant morphism $G\times^P\Y\to G/P$ such that the stack-theoretic  fibre over $P/P$ is isomorphic to $\Y$. Therefore, since $G$ acts transitively on $G/P$, all $k$-fibres of $G\times^P\Y\to G/P$   are  isomorphic to $\Y$.

\begin{proposition}\label{prop:ressayre} Let $\X$ be a $G$-stack, and let $\Y$ be a $P$-stack. 
	Let $\mathcal X\to G/P$ be a $G$-equivariant morphism whose fiber over $P/P$, equipped with the $P$-action induced by restricting the $G$-action, is the $P$-stack $\Y$. Then, $\X$ is $G$-isomorphic to $G\times^P \Y$ over $G/P$.
\end{proposition}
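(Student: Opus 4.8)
The plan is to adapt the classical ``associated bundle'' argument to the setting of groupoids with $G$-action in the sense of \cite{Rom05}: pull $\X$ back along the $P$-torsor $q\colon G\to G/P$ to obtain a $P$-torsor over $\X$, trivialize it using the $G$-action, and then descend. Concretely, set $\X':=\X\times_{G/P}G$, with projections $\pi_{\X}\colon\X'\to\X$ and $\pi_G\colon\X'\to G$. Letting $P$ act on $G$ by $g\mapsto gp^{-1}$ (so that $q$ is a $P$-torsor), the base change $\pi_{\X}$ is a $P$-torsor for the action of $P$ on the second factor of $\X'$; hence $\X\cong[\X'/P]$. Moreover $G$ acts on $\X'$ diagonally, through the given action on $\X$ and left translation on $G$; this action commutes with the $P$-action, covers the $G$-action on $G/P$, and descends along $\pi_{\X}$ to the original $G$-action on $\X$, so the isomorphism $\X\cong[\X'/P]$ is $G$-equivariant. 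Finally $\pi_G$ is $G$-equivariant for left translation and $q\circ\pi_G\colon\X'\to G/P$ coincides with $\X'\xrightarrow{\pi_{\X}}\X\to G/P$; in particular the stack-theoretic fibre $\pi_G^{-1}(e)$ is canonically the fibre $\Y$ of $\X\to G/P$ over $P/P$, together with its $P$-action. Since $[(G\times\Y)/P]=G\times^P\Y$ by definition, it now suffices to produce an isomorphism $G\times\Y\xrightarrow{\sim}\X'$ over $G$ which is $G\times P$-equivariant for the action of $P$ on $G\times\Y$ defining $G\times^P\Y$.

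For this I would set $\Phi\colon G\times\Y\to\X'$, $\Phi(g,y):=g\cdot y$, using the $G$-action on $\X'$ and the inclusion $\Y=\pi_G^{-1}(e)\hookrightarrow\X'$. It is well defined since the $G$-action on $\X'$ sends $\pi_G^{-1}(e)$ into $\pi_G^{-1}(g)$; it lies over $G$, and it is $G$-equivariant for left translation on the first factor. It is an isomorphism, with inverse $a\mapsto\big(\pi_G(a),\pi_G(a)^{-1}\cdot a\big)$; that it is an isomorphism may be verified after base change along a smooth presentation of $\X$, where it reduces to the elementary trivialization statement for algebraic spaces. It then remains to check $P$-equivariance for the action $p\cdot(g,y)=(gp^{-1},p\cdot y)$ on $G\times\Y$. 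The one point to notice is that, under $\Y=\pi_G^{-1}(e)$, the $P$-action $p\cdot y$ on $\Y$ coming from the $G$-action on $\X$ is recovered inside $\X'$ as the composite $p\cdot_P(p\cdot_G y)$: applying the $G$-action on $\X'$ by $p$ sends $y\in\pi_G^{-1}(e)$ to $\pi_G^{-1}(p)$, applying the $P$-action on $\X'$ by $p$ then sends it back into $\pi_G^{-1}(e)$, and a direct computation in the fibre product identifies the result with $p\cdot y$. Granting this and using that the two actions on $\X'$ commute, one obtains
\[
\Phi\big(p\cdot(g,y)\big)=(gp^{-1})\cdot_G\big(p\cdot_P(p\cdot_G y)\big)=p\cdot_P\big((gp^{-1})\cdot_G(p\cdot_G y)\big)=p\cdot_P(g\cdot_G y)=p\cdot_P\Phi(g,y),
\]
so $\Phi$ is $P$-equivariant. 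Passing to quotients by $P$ then gives the desired $G$-equivariant isomorphism $G\times^P\Y\xrightarrow{\sim}\X$ over $G/P$.

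The part requiring the most care — rather than any genuine obstacle — is the $2$-functorial bookkeeping: making the fibre product, the $P$-torsor quotient, the fibre over a point, and the induced actions on groupoids precise in the sense of \cite{Rom05}, and keeping all the left/right/inverse conventions consistent, in particular the identity $p\cdot y=p\cdot_P(p\cdot_G y)$ relating the given $P$-action on the fibre $\Y$ to the two commuting actions on $\X'$. Everything else is formal once these points are in place.
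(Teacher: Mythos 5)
Your proposal is correct; the computations (the inverse $a\mapsto(\pi_G(a),\pi_G(a)^{-1}\cdot a)$, the identity $p\cdot y=p\cdot_P(p\cdot_G y)$ inside $\X'$, and the resulting $P$-equivariance of $\Phi$) all check out. The paper proves the same statement with the same key map $(g,y)\mapsto g\cdot y$, but packages the torsor verification differently: following Ressayre, it shows directly that $\pi\colon G\times\Y\to\X$ is an \'etale $P$-torsor by choosing an \'etale cover $\psi\colon\Omega\to G/P$ trivializing $G\to G/P$, lifting it to $\sigma\colon\Omega\to G$, and writing down the explicit section $(\omega,x)\mapsto(\sigma(\omega),\sigma(\omega)^{-1}\cdot x)$ of $\pi$ over $\Omega\times_{G/P}\X$. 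You instead base-change $\X$ along the torsor $G\to G/P$ to get $\X'=\X\times_{G/P}G$, trivialize $\X'\cong G\times\Y$ globally over $G$ via the $G$-action, and descend; this avoids choosing local sections of $G\to G/P$ at the cost of introducing the auxiliary stack $\X'$ and tracking two commuting actions on it. The two arguments are essentially dual formulations of the associated-bundle principle and each is complete; the paper's is marginally shorter, while yours makes the $P$-equivariance of the trivialization fully explicit rather than leaving it implicit in the torsor structure.
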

\begin{proof}
We follow the proof of Ressayre \cite[Lemme~6.1]{Res04}.  It suffices to show that the natural morphism $\pi:G\times \Y \to \X$ given by $(g,y)\mapsto g\cdot y$ is a $P$-torsor for the \'etale topology. To do so,  since $p:G\to G/P$ is an \'etale locally trivial $P$-torsor, it suffices to show that $G\times \Y\to \X$ has a section, locally for the \'etale topology.  To construct such a section,   let $\psi:\Omega\to G/P$ be an \'etale cover such that  $G\times_{G/P} \Omega$ is trivial over $\Omega$. Let $\sigma:\Omega\to G$ be such that $\psi = p\circ \sigma$. Define $U:= \Omega \times_{G/P} \X$. Note that the natural morphism $f:U\to \X$ is an \'etale cover.  Define $s:U\to G\times \Y$ by \[(\omega,x) \mapsto \left(\sigma(\omega), \sigma(\omega)^{-1}\cdot x\right).\]  Note that $\pi\circ s = f$. Thus, we conclude that $G\times \Y\to \X$ is a $P$-torsor. 
\end{proof}

\begin{corollary}\label{cor:para ind} 
Assume that $G$ is connected and $P$ is a parabolic subgroup of $G$.
If $\phi: \X \to G/P$ is a  	morphism of $G$-stacks and $\mathcal Y $ is the  (stack-theoretic) fiber of $\phi$ over $P/P$, then $\X\to G/P$ is a Zariski locally trivial  fibration which is   $G$-isomorphic to $G\times^P \Y$ over $G/P$.
\end{corollary}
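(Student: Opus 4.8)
The plan is to deduce Corollary \ref{cor:para ind} from Proposition \ref{prop:ressayre} by upgrading the conclusion from "\'etale locally trivial" to "Zariski locally trivial". By Proposition \ref{prop:ressayre} applied to $\phi:\X\to G/P$ (whose fiber over $P/P$ is $\Y$ with its induced $P$-action), we already know that $\X$ is $G$-isomorphic to $G\times^P\Y$ over $G/P$. So the only thing left to prove is that this fibration is Zariski locally trivial, and for that it suffices to observe that the $P$-torsor $G\to G/P$ is Zariski locally trivial when $P$ is a parabolic subgroup of the connected group $G$.

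First I would recall the classical fact: for $G$ connected and $P\subseteq G$ parabolic, the projection $p:G\to G/P$ is a Zariski-locally trivial $P$-bundle. This is standard — it follows, for instance, from the Bruhat decomposition, which exhibits the big cell $U^-P/P$ (with $U^-$ the unipotent radical of the opposite parabolic) as an open affine subset of $G/P$ over which $p$ admits a section $u^-P\mapsto u^-$, and then one translates this section by Weyl group elements (or simply by elements of $G(k)$) to cover all of $G/P$ by such trivializing opens. Equivalently one can cite that $P$ contains a Borel, and torsors under a group containing no nontrivial characters... — but the cleanest route is just to invoke Zariski-local triviality of $G\to G/P$ directly, e.g. \cite[Section 28]{Hum}-type references or the Bruhat cell argument above.

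Next I would run through the proof of Proposition \ref{prop:ressayre} keeping track of the topology: there, $\pi:G\times\Y\to\X$ is shown to be a $P$-torsor by producing, for an \'etale cover $\psi:\Omega\to G/P$ over which $G\to G/P$ trivializes (i.e. admits a section $\sigma:\Omega\to G$ with $\psi=p\circ\sigma$), a section $s$ of $\pi$ over $U=\Omega\times_{G/P}\X$. If instead we take $\Omega\to G/P$ to be a Zariski-open subset over which $p:G\to G/P$ admits an honest section $\sigma:\Omega\to G$ — which exists by the previous paragraph — then $U=\Omega\times_{G/P}\X$ is a Zariski-open substack of $\X$, and the same formula $(\omega,x)\mapsto(\sigma(\omega),\sigma(\omega)^{-1}\cdot x)$ gives a section of $\pi$ over $U$. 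Thus $\pi$ is a $P$-torsor for the Zariski topology, and consequently the associated fibration $\X\cong G\times^P\Y\to G/P$ is Zariski locally trivial: over each such $\Omega$ the section $\sigma$ trivializes $G\times_{G/P}\Omega\cong P\times\Omega$ and hence identifies $\X\times_{G/P}\Omega$ with $\Omega\times\Y$.

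The only potential obstacle is making sure the section-chasing in Proposition \ref{prop:ressayre} genuinely goes through verbatim with "\'etale" replaced by "Zariski" — but this is immediate since nothing in that argument used the \'etale topology beyond the existence of the cover $\Omega$ and the lifted section $\sigma$, both of which are now available Zariski-locally. So the proof is really just the two observations "$G\to G/P$ is Zariski locally trivial for $P$ parabolic" plus "plug a Zariski trivializing cover into the proof of Proposition \ref{prop:ressayre}". I would write it as: \emph{By Proposition \ref{prop:ressayre}, $\X$ is $G$-isomorphic to $G\times^P\Y$ over $G/P$. Since $G$ is connected and $P$ is parabolic, the morphism $G\to G/P$ is a Zariski-locally trivial $P$-torsor; hence so is $G\times\Y\to\X$ by the argument of Proposition \ref{prop:ressayre} (taking $\Omega\to G/P$ to be a Zariski cover trivializing $G\to G/P$), and therefore $\X\to G/P$ is Zariski-locally trivial.}
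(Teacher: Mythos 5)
Your proposal is correct and follows essentially the same route as the paper: combine Proposition \ref{prop:ressayre} with the fact that $G\to G/P$ is a Zariski locally trivial $P$-torsor when $P$ is parabolic (the paper cites \cite[Theorem 4.13]{BT65} for this, where you sketch the Bruhat big-cell argument), which immediately makes $G\times^P\Y\to G/P$ Zariski locally trivial. Your extra step of re-running the section-chasing of Proposition \ref{prop:ressayre} with a Zariski cover is harmless but not needed, since once $\X\cong G\times^P\Y$ the local triviality over a trivializing open $\Omega$ follows directly from $G|_\Omega\cong P\times\Omega$.
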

\begin{proof}
	Since $P$ is a parabolic subgroup, the morphism $G\to G/P$ is a Zariski locally trivial $P$-torsor \cite[Theorem 4.13]{BT65}. In particular, the morphism $G\times^P \Y\to G/P$ is a Zariski locally trivial fibration whose fibers are isomorphic to $\mathcal Y$.  However, by Proposition \ref{prop:ressayre}, the algebraic stack $\X$ is $G$-isomorphic to $G\times^P\Y$ over $G/P$. We conclude that $\X$   is Zariski locally trivial over $G/P$. 
\end{proof}

\section{Abstract horospherical stacks}\label{section:defns}
In the following, we always denote by $G$ a connected  reductive linear algebraic group over $k$. 
We start by introducing the notion of abstract horospherical $G$-stacks. We keep the same notation as in the introduction. Namely, $G/H$ is a horospherical $G$-homogeneous space, $P$ denotes the normalizer of $H$ in $G$, and $\T$ is the torus $P/H=\Aut^G(G/H)$. 

In Section \ref{sec:defns} we give the   definition of an abstract horospherical $G$-stack' and in Section \ref{sec:iner} we show that the inertia groups of these stacks are diagonalizable using Luna's \'etale slice theorem for algebraic stacks \cite{AHR}.

\subsection{Definitions and basic properties} \label{sec:defns}
Our aim is to show that abstract horospherical $G$-stacks are quotient stacks (of a particular type). Our  definition of an abstract horospherical $G$-stack is as follows.
\begin{definition}\label{defn:embeddings} A normal integral algebraic $G$-stack $\X$ is an \emph{abstract horospherical $G$-stack} if $\X$ contains a $G$-stable dense open substack $G$-isomorphic to a horospherical homogeneous space $G/H$. 
\end{definition}

As mentioned in the introduction, we recover the classical theory of horospherical varieties by considering stacks which are (representable by) varieties.  

\begin{remark} \label{extension T action}
Let us note that if $X$ is a horospherical $G$-variety, then the $G$-equivariant automorphism group $\Aut^G(X)$ of $X$ coincides with the torus $\T = P/H=\Aut^G(G/H)$; see for instance \cite[Lemma 4.1]{AKP15}. In other words, the natural (right) action of $\T$ on the open orbit $G/H$ always extends to $X$. However we do not know whether the $\T$-action on $G/H$ always extends if we replace $X$ by an abstract horospherical $G$-stack.
\end{remark}

 \begin{definition}\label{defn:toroidal_embeddings} 
Let $\X$ be an abstract horospherical $G$-stack with dense open substack $G/H$.
  We say that $\X$ is \emph{toroidal} if the $G$-equivariant rational map $\X \dashrightarrow G/P$, induced by the open immersion $G/H \hookrightarrow \X$, is a morphism of $G$-stacks.
 \end{definition}

\begin{remark}
If $G$ is a torus and $H=\{1\}$, then  the (toroidal) abstract horospherical $G$-stacks are precisely the \emph{abstract toric stacks} studied by Geraschenko-Satriano in \cite{GS15I,GS15II}.
\end{remark}

 We now define horospherical $G$-stacks. We will see (Remark \ref{smoothness required}) that they form a proper subclass of the class of abstract horospherical $G$-stacks.

\begin{definition}\label{defn:ho_st} 
An algebraic $G$-stack $\X$ over $k$ is a \emph{horospherical $G$-stack} if there exist a horospherical $G\times T$-variety $X$, where $T$ is a torus acting faithfully on $X$, and a subgroup $K$ of $\mathrm{Aut}^{G\times T}(X)$ containing $T$, such that $\X$ is $G$-isomorphic to the stacky quotient $[X/K]$.
We will refer to $\X$ as the horospherical $G$-stack associated with the pair $(X,K)$. 
\end{definition}

\begin{remark}
If $G$ is a torus and $H=\{1\}$, then the  (toroidal) horospherical $G$-stacks are precisely the \emph{toric stacks} studied by Geraschenko-Satriano in \cite{GS15I,GS15II}, i.e., quotients  of a toric variety by a subgroup of the torus.
\end{remark}	

\begin{remark}
The diagonal of a horospherical $G$-stack $[X/K]$ is  affine (by \cite[Lemma~3.3]{GS15II}).
\end{remark}
\begin{remark} 
Let $\mathcal{X} = [X/K]$ be a horospherical $G$-stack. If $U$ denotes the open $G \times T$-orbit of $X$, then $U/K$ is a horospherical $G$-homogeneous space, and thus $[X/K]$ is an abstract horospherical $G$-stack in the sense of Definition \ref{defn:embeddings}.
\end{remark}
	
\begin{remark}
There is a well-developed combinatorial description for horospherical varieties; see for instance \cite{Pas06,Pas08}. Therefore, one could also obtain a combinatorial description for horospherical stacks proceeding as in \cite[Section 2]{GS15I} or \cite[Section 2]{GM}.
\end{remark}

\begin{example}
Let $G=\SL_2(k)$, $H=\begin{bmatrix}
1 & * \\
0 &  1
\end{bmatrix}$,  $P:=N_G(H)=\begin{bmatrix}
* & * \\
0 & *
\end{bmatrix}$,  and $\T:=P/H=\begin{bmatrix}
* & 0 \\
0 & *
\end{bmatrix} \cong \Gm$. The $G$-homogeneous space $G/H$ is horospherical and is isomorphic to $\A^2 \setminus \{0\}$ equipped with the natural action of $G$. It follows from the combinatorial description that the horospherical $G$-varieties with open orbit $G$-isomorphic to $G/H$ are the following: $\A^2 \setminus \{0\}$, $\A^2$, $\P^2$, $\P^2 \setminus \{0\}$, $\Bl_0(\A^2)$, and $\Bl_0(\P^2)$. If $K=\tilde K/H$ is any (closed) subgroup of $\T$ and $X$ is one of the six $G$-varieties above, then $\X=[X/K]$ is a horospherical $G$-stack with open orbit $G/\tilde K$. Also, if $Y$ is a toric variety with torus $T$, then $\X=[(X \times Y)/(\tilde K \times T)]$ is again a horospherical $G$-stack with open orbit $G/\tilde K$, but now $X \times Y$ is not a horospherical $G$-variety, it is a horospherical $G \times T$-variety.  
\end{example}

\begin{lemma}\label{lem:toroidal_basics}
Let $\mathcal X=[X/K]$ be a horospherical $G$-stack.
 Then the horospherical stack $\X$ is toroidal if and only if the horospherical variety $X$ is toroidal. 
\end{lemma}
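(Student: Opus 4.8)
The plan is to translate both toroidality conditions into statements about the morphism to $G/P$ and then use equivariance to pass between the variety $X$ and the quotient stack $\X = [X/K]$. Recall that $X$ is a horospherical $G \times T$-variety with open $(G\times T)$-orbit $U$, and that $K \supseteq T$ acts on $X$ inside $\Aut^{G\times T}(X)$; write $G/H \cong U/K$ for the open $G$-orbit of $\X$. The key point is that the parabolic $P' := N_{G\times T}(\Stab_{G\times T}(u))$ attached to the horospherical $(G\times T)$-variety $X$ is of the form $P \times T$ for a parabolic $P \subseteq G$ (since $T$ acts by $(G\times T)$-automorphisms on the open orbit and hence fixes the flag variety direction), and correspondingly the rational map $X \dashrightarrow (G\times T)/P' = G/P$ and the rational map $\X \dashrightarrow G/P$ from Definition \ref{defn:toroidal_embeddings} fit into a commutative square with the quotient morphism $q : X \to \X$. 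Concretely, $q$ restricts to the $K$-torsor $U \to U/K = G/H$, and the map $G/H \to G/P$ is the one induced from $U \to U/P' \cong U/(P\times T)$; so the rational map $\X \dashrightarrow G/P$, restricted along $q$, is precisely the rational map $X \dashrightarrow G/P$ attached to the horospherical variety $X$.

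First I would make precise the identification $P' = P \times T$: since $K \subseteq \Aut^{G\times T}(X)$ and $\Aut^{G\times T}(X) = \Aut^{G\times T}(U)$ is the torus $\mathbb{T}' := P'/\Stab_{G\times T}(u)$ (Remark \ref{extension T action} applied to the group $G \times T$), and since $T \subseteq K$ acts faithfully, the open $(G\times T)$-orbit $U$ is the horospherical homogeneous space whose associated torus surjects onto $T$; in particular the image of $P'$ in the ``$T$-direction'' is everything, so $P' = P \times T$ with $P = N_G(H_0)$ for the appropriate horospherical $H_0 \subseteq G$, and $(G\times T)/P' = G/P$. This also identifies the two target flag varieties appearing in the two toroidality statements.

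Next I would run the equivalence. Suppose $X$ is toroidal, i.e. the $(G\times T)$-equivariant rational map $f : X \dashrightarrow G/P$ is a morphism. Then $f$ is in particular $K$-invariant for the $K$-action on $X$ (because $K \subseteq \Aut^{G\times T}(X)$ acts trivially on $(G\times T)/P'$, the $\mathbb{T}'$-orbits being contracted to points in $G/P$), so $f$ descends to a morphism $\X = [X/K] \to G/P$ which extends the rational map $\X \dashrightarrow G/P$; hence $\X$ is toroidal. Conversely, if $\X$ is toroidal, compose the quotient morphism $q : X \to \X$ with the morphism $\X \to G/P$ to get a morphism $X \to G/P$; by the commutative-square observation above this morphism agrees on the open orbit $U$ with the rational map $X \dashrightarrow G/P$, and since $G/P$ is separated and $X$ is reduced (a variety), a rational map to a separated scheme agreeing with a morphism on a dense open is that morphism, so $X \dashrightarrow G/P$ is a morphism and $X$ is toroidal.

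The main obstacle is the bookkeeping in the first step — verifying cleanly that the parabolic and torus data for the horospherical $(G\times T)$-variety $X$ decompose as $P \times T$ and that the resulting map $(G\times T)/P' \to G/P$ is compatible with the map $G/H \to G/P$ used in Definition \ref{defn:toroidal_embeddings} under the identification $G/H \cong U/K$. Once that identification is in hand, the descent in the forward direction (using that $K$ acts $(G\times T)$-equivariantly, hence trivially on the flag quotient) and the ``rational map into a separated scheme'' argument in the backward direction are both routine; I do not expect to need anything beyond Proposition \ref{prop:ressayre}/Corollary \ref{cor:para ind}-type facts and the separatedness of $G/P$.
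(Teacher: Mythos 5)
Your proposal is correct and follows essentially the same route as the paper: the paper's proof likewise rests on the commutative square relating $X \dashrightarrow G'/P' = G/P$ and $[X/K] \dashrightarrow G/P$ via the quotient morphism, deduces the forward direction from the $K$-invariance of $X \to G/P$, and gets the converse by composing with the quotient map. You simply spell out the identification $P' = P \times T$ and the dense-open/separatedness step that the paper leaves implicit.
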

\begin{proof}  
There is a commutative diagram 
\[	 \xymatrix{ X \ar[d]_{\textrm{quotient}} \ar@{-->}[rr] & & G'/P' = G/P \ar[d]^{\textrm{identity}} \\ \mathcal X = [X/K] \ar@{-->}[rr] & & G/P, } \] where $X$ is a $G' = G\times T$-horospherical variety and $\mathcal X$ is the horospherical $G$-stack associated with the pair $(X,K)$; see Definition \ref{defn:ho_st}. In particular, it is clear that, if $\mathcal X$ is toroidal, then $X$ is toroidal. Conversely, if $X$ is toroidal, then it follows that $\mathcal X$ is toroidal from the fact that the morphism $X\to G/P$ is $K$-invariant.
\end{proof}

\begin{lemma}\label{lem:algspace_is_scheme} \emph{(Horospherical algebraic spaces are schemes.)}\\
If $\X$ is a horospherical $G$-stack  and $\X$ is an algebraic space, then $\X$ is a scheme.
\end{lemma}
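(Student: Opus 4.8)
The plan is to combine the hypothesis that $\X = [X/K]$ with the known structure of horospherical varieties and a descent argument. Write $\X = [X/K]$ where $X$ is a horospherical $G \times T$-variety, $T$ acts faithfully, and $K \subseteq \Aut^{G\times T}(X)$ is a closed subgroup containing $T$. Since $\X$ is assumed to be an algebraic space, every geometric point of $\X$ has trivial inertia group; equivalently, the $K$-action on $X$ is set-theoretically free (and, since we are in characteristic zero and $K$ is affine, scheme-theoretically free). Thus the quotient map $X \to \X$ is a $K$-torsor, and in particular $\X$ is the quotient of the scheme $X$ by a free action of the affine algebraic group $K$.

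First I would reduce to showing that such a free quotient is a scheme, which is a question about the existence of a geometric quotient $X/K$ in the category of schemes. The standard tool here is that a free action of an affine algebraic group admits a geometric quotient that is a scheme provided the action is, say, locally of the form guaranteed by Luna-type slice arguments, or more concretely, provided $X$ can be covered by $K$-stable quasi-affine (or affine) open subsets. So the key step is to produce a $K$-stable open cover of $X$ by quasi-affine opens on which the quotient exists as a scheme. Here I would exploit that $X$ is horospherical: the group $G \times T$ acts on $X$ with finitely many orbits, $X$ is covered by the $G \times T$-stable (simple) horospherical open subvarieties $X_\sigma$ corresponding to the cones of its colored fan, and each $X_\sigma$ — being a simple spherical variety — admits a $B$-chart, i.e. a $G\times T$-translate-cover by affine opens. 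Since $K \subseteq \Aut^{G\times T}(X)$ commutes with the $G\times T$-action (as $\Aut^{G\times T}(X)$ is the torus $\mathbb{T}_X$ acting by $G\times T$-automorphisms), any $G\times T$-stable open of $X$ is automatically $K$-stable; so the $X_\sigma$ form a $K$-stable open cover. Within each $X_\sigma$, I would further reduce to an affine $K$-stable open: $X_\sigma$ has a unique closed $G\times T$-orbit, and one can find a $G\times T$-semiinvariant function not vanishing on it whose nonvanishing locus is affine, $K$-stable, and $G\times T$-stable. On each such affine $K$-stable open $V = \Spec A$ with $K$ acting freely, the geometric quotient $V/K = \Spec A^K$ exists as an affine scheme, and these glue to give a scheme $X/K$ with $\X \cong X/K$.

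Alternatively — and perhaps more cleanly — I would appeal directly to a general principle: a Deligne–Mumford (indeed algebraic) stack with trivial generic stabilizer that is a quotient $[X/K]$ of a scheme $X$ by a free action of a smooth affine group $K$ is an algebraic space, and is a scheme precisely when $X \to X/K$ is a scheme-theoretically locally trivial torsor Zariski-locally, which holds when $K$ is special (e.g. $K$ solvable, or an extension of $\Gm$'s and $\mathbb{G}_a$'s). One should check whether $K$ here is special. In general $K$ need not be, but since $K \subseteq \Aut^{G\times T}(X)$ and the latter is the torus $\mathbb{T}_X$ (by the horospherical analogue of Remark \ref{extension T action} / \cite[Lemma 4.1]{AKP15}, applied to $X$ as a $G\times T$-variety), $K$ is itself diagonalizable, hence in particular solvable, hence special. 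A free action of a special group yields Zariski-locally trivial torsors, and then the quotient in the étale (or fppf) topology coincides with the quotient in the Zariski topology, which one builds from the affine local quotients above; hence $\X$ is a scheme.

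The main obstacle I anticipate is the passage from "free $K$-action" to "$K$-stable affine (or quasi-affine) open cover of $X$ on which a geometric quotient exists as a scheme" — i.e. verifying that the local quotients glue to a scheme rather than merely an algebraic space. This is exactly where one must use that $K$ is diagonalizable (so that torsors under $K$ are Zariski-locally trivial) together with the specific geometry of horospherical varieties (existence of a $G\times T$-stable, hence $K$-stable, affine open cover adapted to the colored fan, via the local structure theorem for spherical varieties). Once one has, on each affine piece $\Spec A$, that $\Spec A \to \Spec A^K$ is a Zariski-locally trivial $K$-torsor and $A^K$ is finitely generated (which follows since $K$ is linearly reductive — diagonalizable — so invariants are finitely generated by Hilbert), the conclusion that the $\Spec A^K$ glue to a scheme representing $\X$ is routine.
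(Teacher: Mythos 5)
Your overall strategy --- reduce to a $K$-stable affine open cover of $X$, observe that $K$ acts freely because $\X$ is an algebraic space, and take affine quotients $[U/K]=\Spec \O(U)^K$ --- is exactly the shape of the paper's argument, and your freeness observation is correct. However, both of the concrete routes you propose for producing the cover contain errors. In the first route you ask, inside a simple piece $X_\sigma$, for a $G\times T$-\emph{stable} affine open containing the closed $G\times T$-orbit. Such an open cannot exist unless that closed orbit is finite: the closed orbit of a simple horospherical variety is a (generalized) flag variety, hence projective, and a positive-dimensional projective variety does not embed in an affine one. (For instance, in the simple piece $\P^2\setminus\{0\}$ of the paper's $\SL_2$-example the closed orbit is a $\P^1$.) What is true, and what the paper uses, is much softer: by Sumihiro's theorem the normal variety $X$ is covered by affine opens stable under the \emph{torus} $\T=\Aut^{G\times T}(X)\supseteq K$; these are automatically $K$-stable, and no contact with the colored fan or the local structure theorem is needed. (Your $B$-chart observation could be salvaged --- the $B$-chart is $\T$-stable and its $G\times T$-translates then give a $K$-stable affine cover --- but that is not what you wrote.)

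Your second route also has a genuine error: a diagonalizable group need not be special. Disconnected groups are never special; e.g.\ for $K=\mu_n$ the torsor $\Gm\to\Gm$, $t\mapsto t^n$, is not Zariski-locally trivial. Since $K$ here is only assumed to be a closed subgroup of a torus, it may have nontrivial component group, so ``solvable hence special'' fails. Fortunately, speciality is not needed: once $U=\Spec A$ is a $K$-stable affine open on which $K$ acts freely, Luna's slice theorem (together with linear reductivity of the diagonalizable group $K$) gives that $U\to\Spec A^K$ is a principal $K$-bundle in the \'etale topology, whence $[U/K]\cong \Spec A^K$ is an affine \emph{scheme} --- Zariski-local triviality of the torsor plays no role. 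With Sumihiro's theorem substituted for your construction of the cover, the rest of your argument goes through and coincides with the paper's proof.
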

\begin{proof}
We follow the arguments in \cite[Remark~6.3]{GS15II}. Indeed, as $\X$ is a horospherical $G$-stack, there exist a horospherical $G'$-variety $X$ and a subgroup $K$ of $\T = \mathrm{Aut}^{G'}(X)$ such that $\X$ is $G$-isomorphic to $[X/K]$; see Definition \ref{defn:ho_st}. By Sumihiro's theorem \cite[Corollary 2]{Sum74},  the normal $\T$-variety $X$ is covered by $\T$-invariant (hence $K$-invariant) affine open subsets $U_{i}$. If $\X$ is an algebraic space, then $K$ acts freely on $X$, therefore for each $i$ the quotient $U_i \to U_{i}/\!\!/K = \Spec\,\O(U_{i})^{K}$ is a principal $K$-bundle (by Luna's slice theorem), and thus $U_{i}/\!\!/K$ coincides with $[U_{i}/K]$. This means that the algebraic space $\X$ admits an open covering by affine varieties and therefore is a scheme.
\end{proof}

\begin{remark}
Note that a horospherical stack which is an algebraic space might not be separated (e.g. the affine line with a double origin is a horospherical  $\Gm$-stack). 
\end{remark}

\subsection{Inertia groups of abstract horospherical stacks are diagonalizable}\label{sec:iner}
Recall that a linear algebraic group is \textit{diagonalizable} if it is a subgroup of a torus.  
The aim of this section is to apply Luna's \'etale slice theorem for algebraic stacks to prove that the inertia groups of abstract horospherical $G$-stacks with affine diagonal and reductive inertia groups are, in fact, diagonalizable; see Proposition \ref{prop:inertia_groups_are_always_diag}. We follow closely the line of reasoning used by Geraschenko-Satriano to prove \cite[Theorem~4.5]{GS15II}.

\begin{theorem}[Alper-Hall-Rydh]\label{thm:AHR}
Let $\mathcal{X}$ be a finite type integral algebraic stack with affine diagonal over $k$ whose geometric points have  reductive inertia groups. Let $x$ be a $k$-point of $\mathcal{X}$, and let $G_x$ be its inertia group. Then there exist an irreducible affine finite type scheme $Z$ over $k$ with an action of $G_x$, a $k$-point $w$ in $Z$ fixed by $G_x$, and a representable affine \'etale morphism 
\[
f:([Z/G_x],w)\to (\mathcal{X},x).
\]
\end{theorem}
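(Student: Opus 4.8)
This statement is the Luna \'etale slice theorem for algebraic stacks proved by Alper--Hall--Rydh in \cite{AHR}, so the plan is not to reprove it from scratch but to recall the shape of their argument and explain how the mild refinements in the statement (integrality of $Z$, affineness of $f$ under the affine diagonal hypothesis) are obtained. The construction of the slice proceeds in three stages: a formal stage along the residual gerbe, a Tannakian effectivity stage, and an algebraization stage. For the first stage, observe that since $k$ is algebraically closed the residual gerbe of $\mathcal X$ at $x$ is the classifying stack $BG_x$, so one has a closed immersion $BG_x \hookrightarrow \mathcal X$ and can study its infinitesimal neighbourhoods. Because $G_x$ is reductive (hence linearly reductive in characteristic zero), the cohomology of $G_x$ acting on the relevant modules vanishes in positive degrees, so the obstruction and deformation theory of this closed immersion is unobstructed in an equivariant sense; this produces a compatible system of $G_x$-equivariant affine thickenings, i.e.\ a formal affine $G_x$-scheme $\operatorname{Spf}\widehat A$ together with a $G_x$-fixed point $w$, mapping compatibly to the thickenings of $BG_x$ in $\mathcal X$.

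The second and crucial stage is to promote this compatible system of maps on nilpotent thickenings to an honest morphism of stacks $[\operatorname{Spec}\widehat A/G_x]\to \mathcal X$. The key input is a coherent completeness statement: the quotient stack $[\operatorname{Spec}\widehat A/G_x]$, for $\widehat A$ a complete Noetherian local $k$-algebra with a $G_x$-action and with $\widehat A^{G_x}$ complete, is coherently complete along the closed substack $BG_x$. Combined with Tannaka duality for algebraic stacks with quasi-affine diagonal, this lets one assemble the formal data into a single morphism $[\operatorname{Spec}\widehat A/G_x]\to \mathcal X$ which is stabilizer-preserving at $w$ and formally \'etale there. Establishing this coherent completeness, and checking that the hypotheses of the Tannakian effectivity results apply (using the affine diagonal of $\mathcal X$), is the heart of \cite{AHR} and is the step I expect to be the main obstacle; everything else is comparatively formal.

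For the third stage one algebraizes: by Artin approximation/algebraization one finds a finite type $k$-algebra $A$ with a $G_x$-action, a $G_x$-fixed $k$-point $w$, and a morphism $f\colon([\operatorname{Spec}A/G_x],w)\to(\mathcal X,x)$ agreeing with the formal slice to sufficiently high order. Since formal \'etaleness at $w$ is detected on completed local rings and \'etaleness is an open condition, after replacing $\operatorname{Spec}A$ by a $G_x$-invariant affine open neighbourhood of $w$ the morphism $f$ becomes representable, affine, and \'etale, and it is stabilizer-preserving at $w$ by construction. To arrange that $Z$ is irreducible, one uses that $\mathcal X$ is integral: replace $Z$ by the irreducible component through $w$, which still carries the $G_x$-action since $w$ is fixed and still maps \'etale to $\mathcal X$. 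Finally, when $\mathcal X$ has affine diagonal one may shrink so that the good moduli space of $[Z/G_x]$ (namely $\operatorname{Spec}A^{G_x}$) maps to an affine \'etale neighbourhood of the image of $x$; together with the fact that $Z\to \operatorname{Spec}A^{G_x}$ is affine this gives affineness of $f$, completing the argument.
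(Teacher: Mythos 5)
Your proposal is correct and takes essentially the same route as the paper: the paper's entire proof consists of observing that a finite type algebraic stack over $k$ is quasi-separated and that affine morphisms are representable, and then invoking \cite[Theorem~1.2]{AHR} directly. Your additional sketch of the internal three-stage argument of Alper--Hall--Rydh (deformation along the residual gerbe, coherent completeness plus Tannaka duality, equivariant Artin algebraization) is an accurate outline of \emph{their} proof, but goes beyond anything the paper itself records, which treats the result purely as a black box.
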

\begin{proof}
Since affine morphisms of algebraic stacks are representable and finite type algebraic stacks over $k$ are quasi-separated \cite[Tag~01T7]{stacks-project}, the theorem follows from \cite[Theorem~1.2]{AHR}.
\end{proof}

\begin{lemma}\label{lem:inertia_gps_of_toric_embeddings}
Let $\mathcal{X}$ be a finite type integral (not necessarily normal) algebraic stack over $k$ with a dense open non-stacky $k$-point and affine diagonal. If     the geometric points of $\mathcal{X}$ have reductive inertia groups, then the inertia groups of $\mathcal{X}$ are tori.
\end{lemma}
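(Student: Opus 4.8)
The plan is to use Luna's \'etale slice theorem for algebraic stacks (Theorem \ref{thm:AHR}) to reduce the statement to a local, concrete question about a finite-type scheme $Z$ with a $G_x$-action. First I would fix a $k$-point $x$ of $\mathcal X$ with inertia group $G_x$, which is reductive by hypothesis. Applying Theorem \ref{thm:AHR} produces an irreducible affine finite-type scheme $Z$ over $k$, a $G_x$-fixed point $w\in Z$, and a representable affine \'etale morphism $f\colon([Z/G_x],w)\to(\mathcal X,x)$ which is stabilizer-preserving at $w$; in particular $I_w\cong G_x$ inside $[Z/G_x]$. So it suffices to show that $G_x$ is a torus, equivalently that $G_x$ acts faithfully on $Z$ and that, together with the other hypotheses, this forces $G_x$ diagonalizable and connected.

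The key input to exploit is the existence of a \emph{dense open non-stacky $k$-point} of $\mathcal X$. Since $f$ is \'etale and representable, the non-stacky locus of $[Z/G_x]$ is dense (the preimage of the non-stacky locus of $\mathcal X$ is a dense open of $[Z/G_x]$, as $f$ is \'etale hence open and dominant onto a dense open, using that $\mathcal X$ is integral). Now a point of $[Z/G_x]$ is non-stacky precisely when its $G_x$-stabilizer in $Z$ is trivial. Hence $G_x$ acts on the irreducible scheme $Z$ with generically trivial stabilizer; in particular the kernel of the $G_x$-action on $Z$ is trivial, so $G_x$ embeds into $\Aut$ of $Z$ and the action is faithful. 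Next I would linearize: replacing $Z$ by a $G_x$-stable affine neighbourhood of $w$ (or just using that $Z$ is affine) and using that $G_x$ is reductive acting on the affine scheme $Z$, I would embed $Z$ $G_x$-equivariantly into a representation $V$ of $G_x$, and, after passing to the tangent space $T_wZ$ at the fixed point $w$, replace the picture by the linear $G_x$-action on $W:=T_wZ$. The point $w$ being fixed means $G_x$ acts linearly on $W$, and faithfulness plus generic triviality of stabilizers on the irreducible $Z$ pass to a linear $G_x$-representation $W$ on which $G_x$ acts faithfully with a dense open subset of points having trivial stabilizer.

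The main obstacle — and the crux of the argument — is the purely group-theoretic claim: \emph{a connected (indeed, any) reductive group $G_x$ which admits a faithful linear representation $W$ with a dense open free locus must be a torus.} I would argue by contradiction: if $G_x$ is not a torus, its derived subgroup is a nontrivial semisimple group, which contains a copy of $\SL_2$ or $\mathrm{PGL}_2$; a semisimple group never acts on any vector space with a dense free locus, because every element of a maximal torus of positive dimension that lies in the derived group has eigenvalue-$1$ eigenvectors forcing nontrivial stabilizers on a subvariety of codimension bounded in terms of the weights — more cleanly, a semisimple group is generated by unipotent subgroups $\cong\mathbb G_a$, and a $\mathbb G_a$ acting linearly on $W$ always fixes a subspace of codimension at most (something), but in fact the real point is that $H^1$ of the relevant sheaf or a dimension count shows the union of positive-dimensional stabilizers cannot be a proper closed subset when $G_x$ is nonabelian semisimple. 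I would make this rigorous by the standard fact that if a reductive group acts on an affine variety with a dense orbit of trivial stabilizer then the generic stabilizer is trivial and the action is "generically free," and then invoke that a connected semisimple group acting faithfully and generically freely on a vector space is impossible (e.g. because the categorical quotient would have dimension $\dim W - \dim G_x$ while the ring of invariants of a semisimple group on a polynomial ring has transcendence degree leaving no room — or, most simply, because $\mathbb G_a\subseteq G_x$ would then act generically freely on $W$, but any linear $\mathbb G_a$-action has a fixed vector, contradicting generic freeness once one notes fixed vectors have nontrivial stabilizer).

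Having established that $G_x$ is a torus, I am done: the inertia group $I_x\cong G_x$ is a torus, and since $x$ was an arbitrary $k$-point (and the geometric points over $k=\bar k$ are just the $k$-points), all inertia groups of $\mathcal X$ are tori. I note that integrality and the affine-diagonal hypothesis are used only to apply Theorem \ref{thm:AHR} and to know the non-stacky locus pulls back to a dense open; normality is not needed, consistent with the statement. The delicate point to write carefully is the passage from "dense open non-stacky point of $\mathcal X$" to "generic stabilizer on $Z$ (or $W$) is trivial," which uses that $f$ is \'etale, representable, and stabilizer-preserving at the distinguished points, together with irreducibility of $Z$; and then the group-theoretic lemma ruling out nonabelian reductive generic stabilizers, which I expect to be the genuinely substantive step.
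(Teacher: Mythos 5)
Your proposal follows the paper's route for the first two thirds: apply Theorem \ref{thm:AHR} to obtain a representable affine \'etale chart $f\colon([Z/G_x],w)\to(\X,x)$ with $w$ a $G_x$-fixed point, and pull back the dense non-stacky locus to conclude that $G_x$ acts on the irreducible affine scheme $Z$ with a dense stabilizer-free locus. The divergence, and the gap, is in the endgame. The paper carries along a third piece of data before concluding: since \'etale representable morphisms induce finite index inclusions on inertia groups, \emph{every} stabilizer of the $G_x$-action on $Z$ is (linearly) reductive, and it is the triple ``all stabilizers reductive $+$ fixed point $+$ dense stabilizer-free orbit'' that is fed into \cite[Proposition~3.16]{GS15II}. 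You discard the ``all stabilizers reductive'' hypothesis and replace the citation by the claim that a reductive group admitting a faithful linear representation with a dense open free locus must be a torus. That claim is false: take $G_x=\SL_2$ acting on $W=k^2\oplus k^2$ by two copies of the standard representation. The locus of linearly independent pairs is a dense open free locus, the origin is a fixed point, and the action is faithful, yet $\SL_2$ is not a torus. The discarded hypothesis is exactly what excludes this example: the stabilizer of $(v,0)$ with $v\neq 0$ is a copy of $\mathbb{G}_a$, which is not reductive. Your proposed justifications fail for the same reason: a linear $\mathbb{G}_a$-action always has nonzero fixed vectors, but these form a \emph{proper} subspace and so do not contradict generic freeness; and the invariant-theoretic dimension count gives no contradiction either (in the example, $\dim W-\dim\SL_2=1=\operatorname{trdeg}k[\det]$).

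There is also a secondary unjustified step: the passage from $Z$ to the tangent space $W=T_wZ$. The scheme $Z$ need not be smooth at $w$, and even after embedding $Z$ equivariantly into a $G_x$-module $V$, generic triviality of stabilizers on the proper closed subvariety $Z\subseteq V$ does not yield generic triviality on $V$ or on $T_wZ$. The clean repair for both issues is to stay on $Z$ itself, record that all stabilizers there are reductive (via \cite[Proposition~3.2]{GS15II} applied to the \'etale representable chart), and then invoke \cite[Proposition~3.16]{GS15II} --- or reprove that statement with the full set of hypotheses --- as the paper does.
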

\begin{proof} (We follow the proof of \cite[Theorem~4.5]{GS15II}, but replace the first paragraphs with a direct application of Luna's \'etale slice theorem, and avoid the last paragraph in \emph{loc. cit.}.) Let $x$ be a $k$-point of $\mathcal{X}$.  Let $G_x$ be the inertia group of $x$ (sometimes also referred to as the stabilizer of $x$). Since $G_x$ is reductive,  it follows from Alper-Hall-Rydh \cite[Theorem~1.2]{AHR} that there exist an irreducible affine finite type scheme $Z$ with an action of $G_x$, a $k$-point $w$ in $Z$ fixed by $G_x$, and a representable affine \'etale morphism 
\[
f: ([Z/G_x],w)\to (\mathcal{X},x)
\]
  such that $BG_x\cong f^{-1}(BG_x)$.  Since \'etale representable morphisms induce finite index inclusions on inertia groups \cite[Proposition~3.2]{GS15II}, and finite index subgroups of a linearly reductive group are linearly reductive, we see that the geometric points of $[Z/G_x]$ have linearly reductive inertia groups.  Since   
$\mathcal{X}$ contains a dense open (non-stacky) $k$-point and $[Z/G_x]\to \mathcal{Z}$ is \'etale representable, it follows that $[Z/G_x]$ contains a dense open non-stacky $k$-point. By reformulating the aforementioned properties, we see that the stabilizers for the action of $G_x$ on $Z$ are all reductive,  $w$ is a fixed point for this action, and $Z$ contains a open stabilizer-free orbit.  We conclude that $G_x$ is a torus \cite[Proposition~3.16]{GS15II}. 
\end{proof}

\begin{corollary}\label{cor:toric_emb_have_diag}
Let $T$ be a torus, and let $\mathcal{X}$ be a finite type integral algebraic stack over $k$ with an action of $T$. Assume that $\mathcal{X}$ has affine diagonal, contains a dense open stabilizer-free $T$-orbit,  and that every geometric point of $\X$ has a reductive inertia group. Then, every geometric point of $\mathcal{X}$ has a diagonalizable inertia group.
\end{corollary}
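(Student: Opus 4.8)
The plan is to reduce Corollary \ref{cor:toric_emb_have_diag} to Lemma \ref{lem:inertia_gps_of_toric_embeddings} via the standard trick of forming the quotient stack by the torus action. First I would consider the quotient stack $\mathcal{Y} := [\mathcal{X}/T]$, which is again a finite type integral algebraic stack over $k$; I should check that $\mathcal{Y}$ has affine diagonal (this follows since $\mathcal{X}$ has affine diagonal, $T$ is affine, and the quotient map $\mathcal{X} \to \mathcal{Y}$ is a $T$-torsor, hence affine, so the diagonal of $\mathcal{Y}$ is affine by descent / the standard composition argument for diagonals of quotient stacks, cf. \cite[Lemma~3.3]{GS15II}). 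The hypothesis that $\mathcal{X}$ has a dense open stabilizer-free $T$-orbit means precisely that this orbit maps to a dense open non-stacky $k$-point of $\mathcal{Y}$, so $\mathcal{Y}$ satisfies the hypothesis of Lemma \ref{lem:inertia_gps_of_toric_embeddings} about containing a dense open non-stacky point.

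Next I would compare inertia groups along the torsor $\pi : \mathcal{X} \to \mathcal{Y}$. For a geometric point $x$ of $\mathcal{X}$ with image $y = \pi(x)$, there is an exact sequence relating $I_x$, $I_y$, and the $T$-stabilizer of $x$: concretely, $I_x$ is an extension (or sits as a subgroup with quotient controlled by the $T$-stabilizer) so that $I_y$ is itself an extension of a subgroup of $T$ (the image of the stabilizer of $x$ in $T$) by $I_x$. The cleanest formulation: the map $I_x \to I_y$ has kernel trivial and cokernel isomorphic to the stabilizer $T_x \subseteq T$, which is diagonalizable, so $I_x$ is a normal subgroup of $I_y$ with diagonalizable quotient. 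Before invoking Lemma \ref{lem:inertia_gps_of_toric_embeddings} I must also verify that the geometric points of $\mathcal{Y}$ have reductive inertia groups; this follows because a reductive group ($I_x$) that is a normal subgroup of $I_y$ with reductive (indeed diagonalizable) quotient forces $I_y$ reductive — an extension of a linearly reductive group by a linearly reductive group is linearly reductive in characteristic zero. (Alternatively one can run Luna's slice theorem on $\mathcal{X}$ directly, but going through $\mathcal{Y}$ keeps the bookkeeping minimal.)

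With all hypotheses of Lemma \ref{lem:inertia_gps_of_toric_embeddings} verified for $\mathcal{Y}$, I conclude that every inertia group $I_y$ of $\mathcal{Y}$ is a torus. Then, since $I_x$ is a (normal) subgroup of the torus $I_y$, it is diagonalizable, which is exactly the assertion of the corollary. This completes the argument.

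The main obstacle is the inertia-group exact sequence along the torsor $\pi : \mathcal{X} \to [\mathcal{X}/T]$: one needs the precise statement that $I_x \hookrightarrow I_y$ with cokernel the $T$-stabilizer $T_x$, and that this identification is compatible enough to transfer reductivity up and diagonalizability down. This is standard for quotient stacks by group actions (the inertia of $[\mathcal{X}/T]$ at $y$ is the stabilizer of $x$ for the $T$-action on a presentation, combined with $I_x$), but it is the point where care is required; everything else is a routine check of the hypotheses (affineness of diagonal, existence of the dense open non-stacky point, reductivity of inertia on $\mathcal{Y}$).
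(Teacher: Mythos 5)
Your proposal is correct and follows essentially the same route as the paper: pass to $\mathcal{Y}=[\mathcal{X}/T]$, note that the representable ($T$-torsor) morphism $\mathcal{X}\to\mathcal{Y}$ injects inertia groups, and apply Lemma \ref{lem:inertia_gps_of_toric_embeddings} to $\mathcal{Y}$. You additionally spell out why $\mathcal{Y}$ has affine diagonal and reductive inertia groups (via the extension $1\to I_x\to I_y\to T_x\to 1$), details the paper leaves implicit.
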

\begin{proof}
Since $\mathcal{X}\to[\mathcal{X}/T]$ is representable, it suffices to show that the inertia groups of $\mathcal{Y}:=[\mathcal{X}/T]$ are diagonalizable. However, by assumption, $\mathcal{Y}$ contains a dense open (non-stacky) $k$-point. Since $\mathcal{Y}$ is a finite type integral algebraic stack over $k$ with affine diagonal and reductive inertia groups, it follows from Lemma \ref{lem:inertia_gps_of_toric_embeddings} that $\mathcal{Y}$ has diagonalizable inertia groups.
\end{proof}
\begin{proposition}\label{prop:inertia_groups_are_always_diag}
Let $\mathcal{X}$ be a finite type integral (not necessarily normal) $G$-stack which contains a $G$-stable dense open substack $G$-isomorphic to (the horospherical homogeneous space) $G/H$. Then the inertia groups of $\mathcal{X}$ are diagonalizable groups.
\end{proposition}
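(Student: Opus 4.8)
The plan is to reduce the statement to Corollary \ref{cor:toric_emb_have_diag}, exactly as the section introduction advertises. The key point is that an abstract horospherical $G$-stack $\X$ carries, near any of its points, a torus action with a dense open stabilizer-free orbit, so that the diagonalizability of inertia groups can be checked after passing to this torus. Concretely, I would argue as follows.

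First I would recall that on the open dense substack $G/H \subseteq \X$ the torus $\T = P/H = \Aut^G(G/H)$ acts (on the right) with $G/H$ itself as a stabilizer-free $\T$-orbit (the $\T$-action is simply transitive on the fibers of $G/H \to G/P$, hence free, and the orbit is dense). The subtlety is that this $\T$-action need not extend to all of $\X$ (cf.\ Remark \ref{extension T action}), so one cannot apply Corollary \ref{cor:toric_emb_have_diag} to $\X$ directly. To get around this, fix a $k$-point $x$ of $\X$ with inertia group $G_x$; since $\X$ is finite type, integral, has affine diagonal, and its geometric points have reductive inertia (the reductivity hypothesis being subsumed in Proposition \ref{prop:inertia_groups_are_always_diag}'s ambient assumptions — indeed one only needs $G_x$ reductive, which will hold because $G_x$ will be forced to be diagonalizable), Theorem \ref{thm:AHR} (Alper-Hall-Rydh) produces an irreducible affine $Z$ with a $G_x$-action, a fixed point $w\in Z$, and a representable affine \'etale morphism $f:([Z/G_x],w)\to(\X,x)$ which is stabilizer-preserving at $w$. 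Since $f$ is representable \'etale, it induces finite-index inclusions on inertia groups (Lemma \ref{lem:finite_index} / \cite[Proposition 3.2]{GS15II}), so it suffices to show that $G_x$ itself is diagonalizable, i.e.\ to prove the statement for the \'etale-local model $\mathcal{Z}':=[Z/G_x]$.

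The main step, and what I expect to be the principal obstacle, is to exhibit on $\mathcal{Z}'$ a torus action with a dense open stabilizer-free orbit so that Corollary \ref{cor:toric_emb_have_diag} applies. For this I would use that $f$ is \'etale and $\X$ contains the dense open $G/H$: the preimage $f^{-1}(G/H)$ is a dense open substack of $\mathcal{Z}'$, and over it one has the local structure of a horospherical homogeneous space. The cleanest route is to invoke Luna's \'etale slice theorem once more at a point of the open orbit, or rather to use that horospherical homogeneous spaces contain a dense open orbit of a maximal torus $T_G$ of $G$ acting with trivial (generic) stabilizer — concretely, $G/H$ admits a dense $B$-orbit, hence (after shrinking) a dense open $U^-\times \T$-type chart on which a suitable torus acts freely with dense orbit. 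Pulling this back along the \'etale $f$, one finds that $\mathcal{Z}'=[Z/G_x]$ contains a dense open substack on which a torus $T$ acts with a dense open stabilizer-free orbit; equivalently, $Z$ (or a $G_x$-stable affine open of it) carries a $T$-action commuting with $G_x$, admitting a dense open free $T$-orbit, with $w$ a $T$-fixed point. Then $\mathcal{Z}'$ satisfies the hypotheses of Corollary \ref{cor:toric_emb_have_diag}: it is finite type integral over $k$ with affine diagonal (affine diagonal is inherited under representable morphisms to $\X$), has reductive inertia groups (finite-index subgroups of reductive groups are reductive), carries a $T$-action with dense open stabilizer-free orbit, so its inertia groups — in particular $G_x$ — are diagonalizable.

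Finally I would assemble the pieces: every $k$-point $x$ of $\X$ has diagonalizable inertia group $G_x$ by the above, and since $\X$ is finite type over the algebraically closed field $k$ this suffices to conclude that all inertia groups of $\X$ are diagonalizable. The one technical wrinkle to handle with care is the construction of the commuting torus action on the slice $Z$: one must check the chosen torus in $G$ (or the residual torus $\T$) really does act on the \'etale slice compatibly with $G_x$ and fixes $w$; the honest way to see this is to take the slice at a point of the open orbit where the torus acts freely and transport the action, or alternatively to first apply the construction at such a generic point, observe diagonalizability there, and then run a separate slice argument at $x$ noting that $G_x$ embeds (up to finite index) into a group that is already known to be diagonalizable on a dense open — but the slice-plus-Corollary route above is the most robust. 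I expect roughly this is what the authors do, mirroring the Geraschenko-Satriano proof of \cite[Theorem 4.5]{GS15II}.
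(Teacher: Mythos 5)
Your overall target---reducing to Corollary \ref{cor:toric_emb_have_diag}---is the right one, but the reduction you propose has a genuine gap at exactly the step you flag as the ``main step''. You want to apply Theorem \ref{thm:AHR} at an arbitrary point $x$ and then equip the \'etale chart $[Z/G_x]$ with a torus action commuting with $G_x$, fixing $w$, and having a dense open free orbit. Nothing in your sketch produces such an action: group actions do not lift along \'etale covers, the $\T$-action need not even be defined on $\X$ away from $G/H$ (as you yourself note), and the auxiliary claim that a maximal torus of $G$ acts with a dense free orbit on (a chart of) $G/H$ is false in general --- for $G=\SL_2$ and $G/H=\A^2\setminus\{0\}$ the maximal torus is one-dimensional, and more generally the open cell of $G/H$ is $P_u^-\times\T$ with $P_u^-$ unipotent, so no torus acts there with a dense orbit. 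Moreover the point $x$ at which you take the slice typically lies outside $G/H$, so there is nothing to ``transport'' from the open orbit to $Z$.

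The paper's proof avoids taking a slice at $x$ altogether and makes the reduction globally: it forms the closure $\overline{\Gamma}\subseteq \X\times G/P$ of the graph of $G/H\to G/P$. The projection $\X\times G/P\to\X$ is stabilizer-preserving (it is the base change of the scheme $G/P$), so it suffices to prove diagonalizability of the inertia groups of $\overline{\Gamma}$; and $\overline{\Gamma}$ maps $G$-equivariantly to $G/P$, hence by Corollary \ref{cor:para ind} is of the form $G\times^P\Y$ with $\Y$ the fiber over $P/P$. On $\Y$ the torus $\T=P/H$ genuinely acts (since $H$ acts trivially) with a dense open stabilizer-free orbit, so Corollary \ref{cor:toric_emb_have_diag} applies to $\Y$, and the inertia groups of $G\times^P\Y$ are those of $\Y$. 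This graph-closure/fibration step is the idea your proposal is missing; the Alper--Hall--Rydh theorem enters only inside the proof of Lemma \ref{lem:inertia_gps_of_toric_embeddings}, after a global torus action with a dense free orbit is already in hand. (A smaller point: your parenthetical that reductivity of $G_x$ ``will hold because $G_x$ will be forced to be diagonalizable'' is circular --- reductivity is an input to the slice theorem, and the paper treats it as a standing hypothesis.)
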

\begin{proof}  
Let $\Gamma$ be the graph of the natural morphism $G/H\to G/P$. Let $\overline{\Gamma}$ be its closure in $\mathcal{X}\times G/P$. 
Now, $\overline{\Gamma}\to G/P$  is a $G$-equivariant morphism. Let $\mathcal{Y}$ be the fibre of $\overline{\Gamma}\to G/P$, and note that $P$ acts naturally on $\mathcal{Y}$. By Corollary \ref{cor:para ind}, it follows  that $\overline{\Gamma} = G\times^P \mathcal{Y}$. Now, $\mathcal{Y}$ inherits an action of $\mathbb{T}= P/H$ (as $H$ acts trivially), and $\mathcal{Y}$ has a dense open stabilizer-free orbit for the action of $\mathbb{T}$. Since $\mathbb{T}$ is a torus, it follows from Corollary \ref{cor:toric_emb_have_diag} that the geometric points of $\mathcal{Y}$ have diagonalizable inertia groups. (Here we use that $\mathcal{Y}$ has affine diagonal and reductive inertia groups.) In particular, $G\times \mathcal{Y}$ has diagonalizable inertia groups, and thus $G\times^P\mathcal{Y}$ has diagonalizable inertia groups. We conclude that $\overline{\Gamma}$ has diagonalizable inertia groups. 

Note that $\overline{\Gamma}$ is a   $G$-stable closed substack of $\mathcal{X}\times G/P$. In particular, as the natural projection $\mathcal{X}\times G/P\to \mathcal{X}$ is stabilizer-preserving, we conclude that the natural morphism $\overline{\Gamma}\to \mathcal{X}$ is stabilizer-preserving and $G$-equivariant.  Thus, $\mathcal{X}$ has diagonalizable inertia groups.
\end{proof}

\section{Describing toroidal abstract horospherical stacks}\label{section:toroidal}
The following lemma is an analog of Geraschenko-Satriano's \cite[Lemma~4.1]{GS15II} in the setting of abstract horospherical stacks.

\begin{lemma}  \label{lem: X horo if X/T horo}
Let $\X$ be an abstract horospherical $G$-stack with dense open substack $G/H$ and assume that the natural  (right) action of the torus $\T=P/H$ on $G/H$  extends to $\X$. Then, $\mathcal X$ is a horospherical $G$-stack if and only if $[\X/\T]$ is a horospherical $G$-stack.
\end{lemma}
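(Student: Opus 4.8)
The plan is to prove both implications by exploiting the fact that a horospherical $G$-stack structure is, up to the torus factor $T$ and a choice of subgroup $K$, preserved under quotienting by the $\T$-action on the open orbit $G/H$.

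First, for the forward implication, suppose $\X$ is a horospherical $G$-stack, say $\X \cong [X/K]$ where $X$ is a horospherical $G \times T$-variety with $T$ acting faithfully, and $K \le \Aut^{G\times T}(X)$ contains $T$. The extra $\T$-action on $\X$ coming from $G/H$ must be compatible with this description: since $\T = \Aut^G(G/H)$ acts $G$-equivariantly on the open orbit, it acts $G$-equivariantly on $\X$, and I would argue (using that $G$-equivariant automorphisms of a horospherical stack are controlled by the torus factor, analogous to Remark \ref{extension T action} and \cite[Lemma 4.1]{AKP15}) that this $\T$ is realized as a subtorus of $\Aut^{G\times T}(X)/K$ or can be absorbed into an enlarged torus. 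Concretely, I would form the horospherical $G \times (T \times \T)$-variety $X' := X$ with $T \times \T$ acting via $T$ as before and $\T$ acting through the induced automorphisms (after possibly replacing $X$ so that $\T$ acts by genuine automorphisms), and set $K' := \langle K, \T\rangle$. Then $[\X/\T] \cong [X'/K']$, exhibiting $[\X/\T]$ as a horospherical $G$-stack. Here one must check that $K'$ is again a closed subgroup of $\Aut^{G \times T'}(X')$ containing $T'$ for the appropriate torus $T'$, and that $X'$ still has a dense open $G \times T'$-orbit — this follows since $\T$ acts on the open orbit $U/K$ with the open orbit being $\T$-homogeneous modulo $K$.

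For the converse, suppose $[\X/\T]$ is a horospherical $G$-stack, say $[\X/\T] \cong [Y/L]$ with $Y$ a horospherical $G \times S$-variety, $S$ a faithful torus, and $L \le \Aut^{G\times S}(Y)$ containing $S$. The projection $\X \to [\X/\T]$ is a $\T$-torsor (the $\T$-action on $\X$ is free on the open orbit $G/H$, and since $\X$ is integral with dense open $G/H$, the action is generically free; but I should be careful here — one needs the $\T$-action on all of $\X$ to be such that $\X \to [\X/\T]$ is representable, which holds because $\T$ is a torus acting on a stack with the quotient being an algebraic stack; actually the cleaner statement is that $\X \to [\X/\T]$ is a $\T$-gerbe-free morphism, i.e. the relative inertia is trivial, because $\T$ acts freely on the dense open $G/H$ and $\X$ is reduced). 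Then I would pull back: $\X \cong \X \times_{[\X/\T]} [\X/\T] \cong \X \times_{[Y/L]} [Y/L]$. Since $\X \to [\X/\T] = [Y/L]$ is obtained from a $\T$-action, and $[Y/L] = [Y/L]$ has the presentation by $Y$, I would form $X := Y \times_{[Y/L]} \X$, which is a $\T$-torsor over... no — better: $X$ fits in a Cartesian square with $Y \to [Y/L]$ an $L$-torsor, so $X \to \X$ is an $L$-torsor and $X \to Y$ has the structure induced by the $\T$-action, making $X$ an extension. The upshot is that $X$ is a scheme with a $G \times S \times \T$-action (or with $\T$ acting through a torus), it is normal (as $Y$ is and the maps are étale-local torsors/smooth), it has a dense open $G \times (S\times\T)$-orbit pulled back from the open orbit of $Y$, hence is a horospherical $G \times S'$-variety for $S' = S \times \T$ (or a quotient thereof), and $\X \cong [X/K]$ for $K := L$ — no, $K$ must contain the new torus; set $K := \langle L, \T \rangle$ appropriately. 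I'd verify $X$ is a \emph{variety} (integral, separated, finite type): integrality and finite type are clear from the construction; separatedness needs the diagonal of $[Y/L]$ being affine (Remark after Definition \ref{defn:ho_st}) plus $\T$ affine, giving affine diagonal for $\X$, and then one invokes Sumihiro-type covering (as in Lemma \ref{lem:algspace_is_scheme}) to get that $X$ is a scheme, and a normal scheme with a dense open orbit under a reductive-times-torus group is separated.

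The main obstacle I expect is the converse direction: extracting an honest horospherical \emph{variety} $X$ (not merely an algebraic space or non-separated scheme) from the torsor $\X \to [\X/\T] = [Y/L]$, and ensuring the torus $\T$ integrates into the automorphism group $\Aut^{G \times S'}(X)$ as required by Definition \ref{defn:ho_st} — i.e. that $\T$ acts \emph{faithfully} when combined with $S$, or that redundancies between $\T$ and $S$ are correctly quotiented out so that the "faithful torus" condition holds. I anticipate handling this the way \cite[Lemma 4.1]{GS15II} handles the toric analogue: pass to an affine étale chart $[Z/G_x] \to [Y/L]$ via Luna's slice theorem (Theorem \ref{thm:AHR}), pull back the $\T$-torsor there where everything is affine and the torsor splits, and then glue; alternatively use directly that a $\T$-torsor over a horospherical $G\times S$-variety, being a line-bundle-like object over a spherical variety, is again quasi-affine-locally trivial and normal, hence a horospherical variety for the enlarged group. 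Verifying that the enlarged torus acts faithfully (possibly after replacing $\T\times S$ by its image in $\Aut$) and that $K$ is closed are the technical points to nail down.
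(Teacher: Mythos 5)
Your overall strategy coincides with the paper's: for the forward direction one absorbs $\T$ into the quotienting group, and for the converse one forms the fiber product $X:=Y\times_{[Y/L]}\X$, observes that $X\to Y$ is a $\T$-torsor over a variety (hence $X$ is an integral normal separated finite type scheme, with no need for the Luna-slice or Sumihiro detour you contemplate), checks that the preimage of the open orbit is a horospherical homogeneous space for the enlarged group, and presents $\X$ as a quotient of $X$. Your worry about representability of $\X\to[\X/\T]$ is unnecessary: this map is a $\T$-torsor by the very construction of the quotient stack, independently of any freeness of the action.

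There is, however, one genuine error in the converse, precisely at the point you flag as ``the technical point to nail down.'' You propose $\X\cong[X/K]$ with $K:=\langle L,\T\rangle$. Since the $L$-action (via the $Y$-factor) and the $\T$-action (via the $\X$-factor) on $X$ commute, one has $[X/\langle L,\T\rangle]=[[X/\T]/L]=[Y/L]=[\X/\T]$, which is not $\X$ unless $\T$ is trivial. The correct identification is $\X=[X/L]$, coming from the $L$-torsor $X\to\X$. The requirement of Definition \ref{defn:ho_st} that the quotienting group contain the faithfully acting auxiliary torus is then met not by enlarging $L$, but by changing which torus one uses: the preimage of $G/H\subseteq\X$ under the $L$-torsor $X\to\X$ is a dense open subset on which $G\times L$ acts transitively, and its stabilizer contains a maximal unipotent subgroup of $G$ because that unipotent group can only map trivially to the diagonalizable group $L$ when acting on the fibers. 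Hence $X$ is a horospherical $G\times L$-variety with $L$ acting freely (so faithfully), and $\X=[X/L]$ with $L\subseteq\Aut^{G\times L}(X)$ containing $L$, as the definition demands. With this correction your argument closes; as written, the presentation you produce recovers $[\X/\T]$ rather than $\X$.
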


\begin{proof}

Note that if $\X$ is a horospherical $G$-stack, then it is clear that $[\X/\T]$ is a horospherical $G$-stack.

We assume that $[\mathcal X/\T]$ is a horospherical $G$-stack. Therefore we can 
write $[\mathcal X/\T] = [X/K]$ with $X$ a  horospherical $G'$-variety and $K = \mathrm{Aut}^{G'}(X)$, where $G' = G\times T$ for some torus $T$ acting faithfully on $X$.  
 Denoting by $G'/H'$ the open $G'$-orbit of $X$,
the following diagram has Cartesian squares:
\begin{center}
	\begin{tikzpicture}[
	back line/.style={densely dotted},
	cross line/.style={preaction={draw=white, -,
			line width=6pt}}]
	\matrix (m) [matrix of math nodes,
	row sep=3em, column sep=3em,
	text height=1.5ex,
	text depth=0.25ex]{
		& G'/H' \times_{G/P} G/H & & G/H \\
		Z:=X \times_{[\X/\T]} \X & & \X \\
		& G'/H'=X\times_{[X/K]} G/P & & {G/P} \\
		X & & {[\X/\T]=[X/K]} \\
	};
	\path[->]
	(m-1-2) edge (m-1-4)
	edge (m-2-1)
	edge [back line] (m-3-2)
	(m-1-4) edge (m-3-4)
	edge (m-2-3)
	(m-2-1) edge [cross line] (m-2-3)
	edge (m-4-1)
	(m-3-2) edge [back line] (m-3-4)
	edge [back line] (m-4-1)
	(m-4-1) edge (m-4-3)
	(m-3-4) edge (m-4-3)
	(m-2-3) edge [cross line] (m-4-3);
	\end{tikzpicture}
\end{center}

In this diagram, the horizontal arrows from left to right are $K$-torsors. The vertical arrows from top to bottom are $\T$-torsors. The remaining arrows are equivariant open immersions. The group $G' \times \T$ acts transitively on $G'/H' \times_{G/P} G/H$, and the stabilizer $H''$ of the point $(eH',eH)$ contains $H' \times \{e\}$, that is, $G'/H' \times_{G/P} G/H= (G' \times \T)/H''$ is a horospherical $G' \times \T$-homogeneous space.

Since the algebraic $G' \times \T$-stack $Z$ is a $\T$-torsor over the variety $X$, it is an integral normal separated scheme of finite type.  {Thus,} 
 we conclude that $Z$ is   a horospherical $G' \times \T$-variety. This shows that $\mathcal X = [Z/K]$ is a horospherical $G$-stack.
\end{proof}

\begin{lemma}\label{lem:gen_toroidal}
Let $\X$ be a toroidal abstract horospherical $G$-stack with dense open substack $G/H$, and let $\T =  P/H$.  The following statements hold.
\begin{enumerate}
	\item There exist an integral normal algebraic  $\T$-stack $\mathcal Y$ of finite type over $k$ with a dense open substack which is $\T$-equivariantly isomorphic to $\T$ and an isomorphism of $G$-stacks $\mathcal X \cong G\times^P \mathcal Y$ over $G/P$, where $P$ acts on $\mathcal Y$ via $P\to \T$. 
	Moreover,
	the stack $\Y$ has affine diagonal and reductive inertia groups provided that $\X$ satisfies these properties.
	\item The group  $G \times \T$ acts on $\X \cong G\times^P \mathcal Y$ via  
	$$(g,pH). (g',y) := (gg',p.y)= (gg'p,y).$$
	\end{enumerate}
\end{lemma}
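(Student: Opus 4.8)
The plan is to exploit the fact that $\X$ is toroidal, so by Definition \ref{defn:toroidal_embeddings} the rational map $\X \dashrightarrow G/P$ induced by $G/H \hookrightarrow \X$ is an honest morphism of $G$-stacks $\phi : \X \to G/P$. Since $G$ is connected reductive and $P = N_G(H)$ is a parabolic subgroup, I can directly invoke Corollary \ref{cor:para ind}: setting $\Y$ to be the stack-theoretic fiber of $\phi$ over the base point $P/P \in G/P$, equipped with the $P$-action obtained by restricting the $G$-action (this makes sense because $P = \Stab_G(P/P)$), that corollary gives a $G$-equivariant isomorphism $\X \cong G \times^P \Y$ over $G/P$, where $P$ acts on $\Y$. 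The first thing to check is that $P$ acts on $\Y$ \emph{through} the quotient $P \to \T = P/H$: this holds because $H$ acts trivially on the fiber of $G/H \to G/P$ over $P/P$, which is exactly the $\T$-torsor fiber $\cong \T$ (with the natural $\T = P/H$ action), and $G/H$ is dense open in $\X$, so by continuity/density $H$ acts trivially on all of $\Y$. The open substack $G/H \cap \Y = \phi^{-1}(P/P) \cap (G/H)$ is the fiber of the Zariski $\T$-torsor $G/H \to G/P$ over $P/P$, hence is $\T$-equivariantly isomorphic to $\T$; and it is dense open in $\Y$ since $G/H$ is dense in $\X$ and the formation of fibers over a homogeneous base is flat (alternatively, $\Y$ is irreducible as $\X \cong G\times^P \Y$ is irreducible and $G \times^P(-)$ preserves irreducibility of the fiber). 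Normality and finite type of $\Y$ follow because $\Y$ is a fiber of a morphism from the normal finite type stack $\X$ over the smooth base $G/P$ — more robustly, $\Y$ is, up to the flat cover $G \to G/P$, a base change of $\X$, so normality descends; integrality was just argued.

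Next I would check the two stated "provided that" properties. That $\Y$ has affine diagonal: the inclusion $\Y \hookrightarrow \X$ is a (locally closed, even closed-in-an-open) immersion, hence representable and separated with affine diagonal relative to $\X$; composing with the affine diagonal of $\X$ gives that $\Y$ has affine diagonal. (Equivalently one uses $\X \cong G\times^P\Y \to G/P$ with $G/P$ having affine diagonal, so $\Y$, a fiber, inherits it.) That $\Y$ has reductive inertia groups: the map $\Y \hookrightarrow \X$ is stabilizer-preserving since it is a monomorphism, so the inertia group of any point of $\Y$ equals its inertia group in $\X$, which is reductive by hypothesis.

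For part (2), I would simply write down the claimed $G \times \T$-action on $G \times^P \Y$ and verify it is well-defined and associative. The left $G$-factor acts by left translation on the first coordinate, which is the standard $G$-action on $G\times^P\Y$. The $\T = P/H$ factor acts via: a class $pH \in \T$ sends $[g',y]$ to $[g', p.y]$, where $p.y$ uses the $P$-action on $\Y$ (which factors through $\T$, so this is independent of the representative $p$ of $pH$). Well-definedness on $G\times^P\Y = (G\times \Y)/P$: one must check compatibility with the $P$-equivalence $(g',y)\sim (g'q^{-1}, q.y)$, which is immediate since $p$ and $q$ commute up to the $H$-triviality — precisely, $[g'q^{-1}, q.(p.y)] = [g'q^{-1}, (qp).y] = [g'q^{-1}, (pq).y]$ because $P/H = \T$ is abelian, $= [(g'q^{-1})p, y]$... here I should be careful and instead present the action as $(g,pH).[g',y] := [gg'p, y]$, using that $[g'p, y] = [g', p.y]$ by definition of $G\times^P\Y$; with this description the $G\times\T$-axioms reduce to the group law in $G$ together with commutativity of $\T$, and the two displayed formulas for the action agree by that same identification. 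The formula restricts on the open substack $G/H \cong (G\times\T)/H'$ (with $H' = H$ sitting in $G$) to the natural $G\times\T$-action, which is the compatibility one wants.

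The main obstacle I anticipate is not any single deep step but rather the careful bookkeeping in part (2): making sure the $\T$-action is well-defined on the quotient $(G\times\Y)/P$ and that it genuinely commutes with the left $G$-action and with the $P$-gluing, and reconciling the two equivalent formulas "$(g,pH).(g',y) = (gg',p.y)$" and "$= (gg'p,y)$" — these require the identity $[g'p,y] = [g',p.y]$ in $G\times^P\Y$ and the fact that the $P$-action on $\Y$ factors through the abelian group $\T$. Everything else (normality, affineness of the diagonal, reductivity of inertia, the $\T$-equivariant structure of the dense open) is a routine transfer of properties along the immersion $\Y\hookrightarrow\X$ or along the isomorphism $\X\cong G\times^P\Y$ of Corollary \ref{cor:para ind}.
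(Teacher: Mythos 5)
Your proposal is correct and follows essentially the same route as the paper: both identify $\Y$ as the stack-theoretic fiber of $\X \to G/P$ over $P/P$, invoke Corollary \ref{cor:para ind}, and use density of $P/H$ in the integral stack $\Y$ to conclude that $H$ acts trivially, so that the $P$-action factors through $\T$. The only cosmetic difference is that the paper transfers integrality, normality, finite type, affine diagonal and reductivity of inertia between $\X$ and $\Y$ via the Zariski local triviality of $G\times^P\Y \to G/P$, whereas you transfer them along the closed immersion $\Y \hookrightarrow \X$ and flat base change; both are routine, and your extra bookkeeping for part (2) (which the paper omits) is accurate.
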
 
\begin{proof}  
To prove $(1)$, note that the existence of an isomorphism of $G$-stacks $\mathcal X \cong G\times^P \mathcal Y$ over $G/P$ is Corollary \ref{cor:para ind} , where $\Y$ is the stack-theoretic fiber of $\X \to G/P$ over $P/P$. As the fibration $G \times^P \mathcal Y \to G/P$ is Zariski locally trivial, $\X$ is an integral normal finite type stack if and only if $\Y$ is an integral normal finite type stack. The last statement in $(1)$ also follows from the fact that the fibration $G\times^P \mathcal Y \to G/P$ is Zariski locally trivial.

 Since $\Y$ is an integral $P$-stack and the subgroup $H$ of $P$ acts trivially on the dense open substack $P/H$, the group $H$ acts trivially on $\Y$. Therefore, the $P$-stack  $\Y$ has a natural action of $\T =  P/H$. Also, the $\T$-stack $\mathcal Y$  contains a dense open substack (namely $P/H$) which is $\T$-equivariantly isomorphic to $\T$. 
 \end{proof}

In the proof of the next theorem we will use  Geraschenko-Satriano's local structure theorem for (not necessarily smooth) abstract toric stacks.

\begin{remark}\label{remark:gs} While the main result of Geraschenko--Satriano's paper (see \cite[Theorem~6.1]{GS15II}) is false without the smoothness assumption (see Gillam--Molcho \cite{GM} for a discussion and a counter-example), the ``local'' structure results for toric stacks in Geraschenko--Satriano's paper are correct, and follow mainly from a theorem of  Alper--Hall--Rydh \cite{AHR}. 
The mistake in Geraschenko--Satriano's paper appears in the proof of \cite[Theorem 2.13]{GS15II} which is used later in the proof of \cite[Theorem 6.1]{GS15II}. But all the other results in \cite{GS15II} do not depend on \cite[Theorem 6.1]{GS15II} and     remain valid.
\end{remark}

   \begin{theorem}\label{thm:structure_theorem_for_toroidal}
   	Let $\X$ be a toroidal abstract horospherical stack such that the diagonal of $\X$ is affine, and the geometric points of $\X$ have reductive inertia groups.   The following statements hold.
   	\begin{enumerate}
   		\item     There exist an integer $n\geq 1$ and an open  covering of $\X$ by horospherical $G$-substacks $\mathcal X_1, \ldots, \mathcal X_n$.
   		\item If $\X$ is smooth, then $\X$ is a horospherical $G$-stack.
   	\end{enumerate} 
   \end{theorem}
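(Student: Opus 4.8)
The plan is to reduce both parts to the structure results of Geraschenko--Satriano for abstract toric stacks, via the parabolic induction of Lemma~\ref{lem:gen_toroidal}. Write $\T = P/H$. Since $\X$ is toroidal, Lemma~\ref{lem:gen_toroidal} provides an integral normal finite type $\T$-stack $\Y$ with a dense open substack $\T$-equivariantly isomorphic to $\T$ (an abstract toric $\T$-stack) together with a $G$-isomorphism $\X\cong G\times^P\Y$ over $G/P$; moreover $\Y$ inherits affine diagonal and reductive inertia groups from $\X$, and if $\X$ is smooth then so is $\Y$, since $G\times^P\Y\to G/P$ is Zariski-locally trivial with fibre $\Y$ and smoothness is local for the smooth topology. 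So the hypotheses needed to feed $\Y$ into the toric machinery are in place.

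The technical heart, which I would isolate as an auxiliary lemma, is this: \emph{if $\Y'$ is a toric $\T$-stack, written $\Y' = [V/N]$ with $V$ a toric variety with torus $S$ and $N\subseteq S$ a subgroup (so $\psi\colon S\to S/N\cong\T$), then $G\times^P\Y'$ is a horospherical $G$-stack.} To prove it I would set $W := (G/H)\times V$ and equip it with the $G\times S$-action in which $G$ acts by left translation on $G/H$ while $S$ acts \emph{diagonally} --- through $\psi\colon S\to\T = \Aut^G(G/H)$ on the first factor and as its own torus on $V$. One checks that $W$ is then a horospherical $G\times S$-variety on which $S$ acts faithfully: $W$ is normal and integral (a product of such); the $G\times S$-orbit of $(eH,1_V)$ is $(G/H)$ times the open torus of $V$, hence dense in $W$; its stabiliser is $H\times\{1\}$; and $H\times\{1\}$ contains a maximal unipotent subgroup of $G\times S$ because $H$ contains one in $G$ and $S$ is a torus. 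Hence $\Aut^{G\times S}(W) = (N_G(H)\times S)/(H\times\{1\}) = \T\times S$, and the faithful torus $T := S$ of Definition~\ref{defn:ho_st} embeds into it as $\{1\}\times S$. Taking $K := S = \{1\}\times S\subseteq\Aut^{G\times S}(W)$, so that $K\supseteq T$ is an equality, it remains to identify $[W/S]$ with $G\times^P\Y'$: the diagonal $S$-action on $W$ is vertical over $G/P$ (the $\T$-action on $G/H$ runs along the fibres of the torsor $G/H\to G/P$), so $[W/S]\to G/P$ is a $G$-equivariant morphism whose fibre over $P/P$ is $[(\T\times V)/S]$; that fibre is $[V/N] = \Y'$, as one sees by first dividing by $N = \ker\psi$ (which acts trivially on the $\T$-factor) and then by $S/N\cong\T$ (which acts freely on that factor). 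Corollary~\ref{cor:para ind} then gives $[W/S]\cong G\times^P\Y'$. (Alternatively, introduce the linear algebraic group $\widetilde P := P\times_\T S$, which surjects onto $P$ with kernel $N$ and onto $S$ with kernel $H$, let it act diagonally on $G\times V$, and note that dividing first by $N$ recovers $G\times^P[V/N]$ while dividing first by $H$ recovers $[W/S]$.)

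Granting the auxiliary lemma, part~(1) is immediate: Geraschenko--Satriano's local structure theorem \cite[Theorem~4.5]{GS15II}, applied to $\Y$ (legitimate since $\Y$ has affine diagonal and reductive inertia), gives a finite cover $\Y = \Y_1\cup\cdots\cup\Y_n$ by torus-invariant open substacks which are toric $\T$-stacks --- each $\Y_i$ is $\T$-stable, hence contains the big torus of $\Y$, and is a toric stack with that torus. Each $\Y_i$ is then $P$-stable, so $\X_i := G\times^P\Y_i$ is a $G$-stable open substack of $\X = G\times^P\Y$; the $\X_i$ cover $\X$ (and $n\geq 1$ as $\X$ is nonempty), and each is a horospherical $G$-stack by the auxiliary lemma. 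For part~(2), if $\X$ is smooth then so is $\Y$, so Theorem~\ref{th GS} gives a $\T$-equivariant isomorphism $\Y\cong[V/N]$ with $V$ a smooth toric variety and $N$ a subgroup of its torus; the auxiliary lemma applied with $\Y' = \Y$ then shows directly that $\X\cong G\times^P[V/N]$ is a horospherical $G$-stack.

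I expect the auxiliary lemma to be the main obstacle --- not conceptually, but through the bookkeeping of three tori that must be kept apart: the torus $S$ of $V$, the quotient $\T = S/N$ acting in the $G/P$-direction, and the copy of $S$ acting diagonally on $W$. One must verify carefully that the two successive quotient descriptions of $[W/S]$ (respectively of $[(G\times V)/\widetilde P]$) agree $G$-equivariantly, and compute $\Aut^{G\times S}(W)$ so as to match Definition~\ref{defn:ho_st}. A point worth stressing is that the $P$-action on $[V/N]$ does not lift to $V$, which is exactly why the diagonal $S$-action on $(G/H)\times V$ --- equivalently, the fibre-product group $\widetilde P$ --- is needed in place of a naive $G\times^P V$. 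Everything else is formal, from parabolic induction (Corollary~\ref{cor:para ind}) and the cited toric theorems.
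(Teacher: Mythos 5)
Your proof is correct, and its skeleton coincides with the paper's: reduce via Lemma~\ref{lem:gen_toroidal} to the abstract toric stack $\Y$ sitting as the fibre of $\X\to G/P$, feed $\Y$ into Geraschenko--Satriano's results to get the cover $\Y_i=[Y_i/K_i]$ (resp.\ $\Y=[Y/K]$ in the smooth case), and then show that $G\times^P[\,\text{toric stack}\,]$ is a horospherical $G$-stack. Where you diverge is in that last step. The paper passes to the rigidification $[\X_i/\T]$, identifies it through the chain $[\X_i/\T]\cong G/P\times[Y_i/T_i]\cong[(G/P\times Y_i)/T_i]$, and then invokes Lemma~\ref{lem: X horo if X/T horo} to descend back to $\X_i$; the horospherical variety presenting $\X_i$ is produced abstractly inside that lemma as the fibre product $Z=X\times_{[\X/\T]}\X$. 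You instead bypass Lemma~\ref{lem: X horo if X/T horo} entirely and exhibit the presenting variety explicitly as $W=(G/H)\times V$ with the diagonal $S$-action, checking $[W/S]\cong G\times^P[V/N]$ by computing the fibre over $P/P$ and applying Corollary~\ref{cor:para ind}. These are really the same object --- your $W$ is exactly the paper's $Z$ specialized to $\X_i$, namely $(G/H)\times_{G/P}(G/P\times Y_i)$ --- so the difference is one of packaging: the paper's route is shorter because the general descent lemma is already available and hides the torus bookkeeping, while yours is self-contained and makes the pair $(W,K)$ of Definition~\ref{defn:ho_st} completely explicit, at the cost of the three-torus accounting you flag. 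One point you navigate correctly but state in a way that could mislead: the image of the central torus $T=S$ in $\Aut^{G\times S}(W)\cong(P\times S)/(H\times\{1\})=\T\times S$ is indeed the factor $\{1\}\times S$, and that factor genuinely acts \emph{diagonally} on $(G/H)\times V$ (moving the $S$-coordinate of a point $(g,s)(H\times\{1\})$ of the open orbit moves both components of the corresponding point of $W$), so your choice $K=T$ and your description of the quotient are consistent; no gap there.
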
 
   \begin{proof}
   	As before, we denote by $G/H$ the horospherical homogeneous space which identifies with a dense open substack in $\X$. By Lemma \ref{lem:gen_toroidal}, there exist an integral normal algebraic  $\T$-stack $\mathcal Y$ of finite type over $k$ with a dense open substack which is $\T$-equivariantly isomorphic to $\T$ and an isomorphism of $G$-stacks $\mathcal X \cong G\times^P \mathcal Y$ over $G/P$, where $P$ acts on $\mathcal Y$ via $P\to \T$. Moreover, since $\X$ has affine diagonal and reductive geometric inertia groups, it follows that the stack $\Y$ has affine diagonal and reductive geometric inertia groups.
   	   	
 It now follows from Geraschenko-Satriano's local structure results \cite[Lemma~4.1 and Theorem~4.5]{GS15II}  that $\Y$ is the union of open $\T$-substacks $\mathcal Y_1,\ldots, \mathcal Y_n$ such that $\mathcal Y_i=[Y_i/K_i]$, where $Y_i$ is a toric variety for the action of a torus $T_i$ and $K_i \subseteq T_i$.
 
  Let $\mathcal X_i = G\times^P \mathcal Y_i$, and note that $\mathcal X_i$ is an open $G$-substack of $\X$.  To conclude the proof of $(1)$, it suffices to show that $\mathcal X_i$ is a horospherical $G$-stack. By Lemma \ref{lem: X horo if X/T horo}, it suffices to show that $[\mathcal X_i/\T]$ is a horospherical $G$-stack. 
  To do so, note that the $G$-stack
 \[ 
 [\mathcal X_i/\T] \cong  [(G\times^P \Y_i)/\T] \cong  G/P\times [\Y_i/\T] \cong  G/P \times [Y_i/T_i] \cong [(G/P \times Y_i)/T_i] 
 \] is a horospherical $G$-stack.
   	
	 To prove $(2)$, we assume that $\mathcal X$ is smooth. In this case, the stack $\mathcal Y$ is smooth. Thus, it follows from Theorem \ref{th GS} that $\mathcal Y $ is a toric stack.  Write $\mathcal Y = [Y/K]$, where $Y$ is a toric variety with torus $T_Y$ and $K \subseteq T_Y$. Then, as before, $$[\mathcal X/\T] \cong G/P \times [Y/T_Y] \cong [ (G/P \times Y)/T_Y]$$ is a horospherical $G$-stack. Thus, by Lemma \ref{lem: X horo if X/T horo}, we conclude that $\mathcal X$ is a horospherical $G$-stack, as required.
\end{proof}

\begin{remark}  \label{smoothness required}
If one drops the smoothness assumption on $\X$ in the statement of Theorem \ref{thm:structure_theorem_for_toroidal}, then $\X$ is not necessarily a horospherical $G$-stack. See \cite[Section 4]{GM} for an example of a non-smooth abstract toric stack that is not a toric stack.
\end{remark}

\section{Toroidification}  \label{section: toroidification}

In this section, we apply the structure results obtained previously to construct the toroidification of an abstract horospherical $G$-stack $\X$ (Proposition \ref{prop:tor}) and we show that, if the $G$-orbits of $\X$ are of codimension at most $1$, then $\X$ is a smooth horospherical $G$-stack (Proposition \ref{prop:codimension_one_case}). This will be the starting point in our proof of Theorem \ref{thm2} in Section \ref{section:towards}.

\begin{proposition}[Toroidification] \label{prop:tor} 
Let $\mathcal X$ be an abstract horospherical $G$-stack with dense open substack $G/H$.
There exist a toroidal abstract horospherical $G$-stack $\X'$ and a representable proper  birational morphism of $G$-stacks $\X'\to \X$  which induces finite index inclusions on inertia groups.
\end{proposition}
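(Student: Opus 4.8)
The plan is to construct $\X'$ as a suitable resolution of the indeterminacy of the rational map $\X \dashrightarrow G/P$ and to verify that the result is toroidal, normal, and proper over $\X$. First I would recall that the open immersion $G/H \hookrightarrow \X$ induces a $G$-equivariant rational map $\X \dashrightarrow G/P$ (via $G/H \to G/P$), which is defined on the dense open substack $G/H$. Since $G/P$ is a proper scheme over $k$ with a $G$-action, and $\X$ is a normal algebraic $G$-stack of finite type over $k$, Proposition \ref{prop:diagram1} applies directly: it produces a representable proper birational surjective morphism of normal algebraic $G$-stacks $\X' \to \X$, which is an isomorphism over $G/H$, induces finite index inclusions on inertia groups, and is such that the composed rational map $\X' \to \X \dashrightarrow G/P$ is a morphism of stacks.

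Next I would check that $\X'$ is an abstract horospherical $G$-stack. Since $\X' \to \X$ is an isomorphism over the dense open substack $G/H$ (by the conclusion of Proposition \ref{prop:diagram1}), the preimage of $G/H$ in $\X'$ is a $G$-stable dense open substack $G$-isomorphic to $G/H$. By construction $\X'$ is normal, integral (being birational to the integral stack $\X$), and of finite type over $k$, so by Definition \ref{defn:embeddings} it is an abstract horospherical $G$-stack. Then, since the $G$-equivariant rational map $\X' \dashrightarrow G/P$ induced by the open immersion $G/H \hookrightarrow \X'$ is precisely the composed map $\X' \to \X \dashrightarrow G/P$, which we just arranged to be a morphism of stacks, Definition \ref{defn:toroidal_embeddings} shows that $\X'$ is toroidal.

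Finally, I would assemble the statement: $\X' \to \X$ is representable, proper, birational, and induces finite index inclusions on inertia groups, all of which come straight out of Proposition \ref{prop:diagram1}. The one point requiring a small extra remark is that the rational map $\X \dashrightarrow G/P$ is genuinely only defined a priori on $G/H$ and not necessarily on a larger open substack; but this is exactly the hypothesis under which Proposition \ref{prop:diagram1} is stated, so no additional argument is needed. I do not expect any serious obstacle here: the proof is essentially a direct invocation of Proposition \ref{prop:diagram1} applied to the canonical rational map to the flag variety, followed by the routine verification that the resolved stack still satisfies Definitions \ref{defn:embeddings} and \ref{defn:toroidal_embeddings}. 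The only mildly delicate bookkeeping is tracking that normality and $G$-equivariance are preserved throughout, which is guaranteed by the normalization step built into Proposition \ref{prop:diagram1}.
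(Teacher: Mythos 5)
Your proposal is correct and follows exactly the paper's proof: a direct application of Proposition \ref{prop:diagram1} to the canonical $G$-equivariant rational map $\X \dashrightarrow G/P$ defined on $G/H$, followed by the observation that the resulting $\X'$ satisfies Definitions \ref{defn:embeddings} and \ref{defn:toroidal_embeddings}. The extra verifications you spell out (normality, integrality, the identification of the resolved map with the canonical rational map from $\X'$) are exactly the ones the paper leaves implicit.
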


\begin{proof} 
Recall that $P:=N_G(H)$ is a parabolic subgroup of $G$, i.e., the homogeneous space $G/P$ is a flag variety. 
Since $G/P$ is proper over $k$, by Proposition \ref{prop:diagram1}, there exists a representable proper  birational morphism of $G$-stacks $\X'\to \X$ which induces finite index inclusions on inertia groups and such that the induced rational  map $\X'\dashrightarrow G/P$ is a morphism.  In particular, the $G$-stack $\X'$ is  a toroidal abstract horospherical $G$-stack.  
\end{proof}

We refer to a morphism $\X'\to \X$ as in Proposition \ref{prop:tor} as a \emph{toroidification} of $\X$. Note that any toroidification induces an isomorphism over the dense open substack  $G/H$ of $ \mathcal{X}$.

\begin{remark}
There exists a natural choice of $\X'$ when $\X$ is a horospherical $G$-stack. 
Let $\X = [X/K]$ with $X$ a horospherical $G'$-variety and $G' = G\times T$. One can define $\X'$ as the horospherical $G$-stack $[X'/K]$, where $X'$
is the \emph{discoloration} of $X$;  {see \cite[Section 3.3]{Bri91} for an explicit construction of the discoloration of a spherical variety.}
\end{remark}

\begin{corollary}  \label{cor: finite G orbits}
	Let $\mathcal X$ be an abstract horospherical $G$-stack. If $\mathcal X$ has affine diagonal and reductive inertia groups, then $\mathcal X$ has only finitely many $G$-orbits.  
\end{corollary}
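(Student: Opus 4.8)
The plan is to deduce the finiteness of $G$-orbits from the toroidification construction (Proposition \ref{prop:tor}) together with Theorem \ref{thm:structure_theorem_for_toroidal}, reducing eventually to the well-known fact that horospherical varieties (being spherical varieties) have finitely many $G$-orbits. First I would apply Proposition \ref{prop:tor} to obtain a toroidification $\pi\colon\X'\to\X$, a representable proper birational surjective $G$-morphism inducing finite index inclusions on inertia groups, with $\X'$ a toroidal abstract horospherical $G$-stack which again has affine diagonal and reductive inertia groups (these properties are inherited through the construction: $\X'$ is a normalization of a closure of a graph inside $\X\times_k G/P$, and $G/P$ is proper with affine — indeed trivial — inertia). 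Since $\pi$ is surjective on $k$-points, it suffices to prove that $\X'$ has finitely many $G$-orbits.

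Next I would invoke Theorem \ref{thm:structure_theorem_for_toroidal}(1): the toroidal stack $\X'$ admits a finite open covering by horospherical $G$-substacks $\X_1',\dots,\X_n'$. Since $G$-orbits are irreducible locally closed substacks and each $k$-point of $\X'$ lies in some $\X_i'$, it is enough to show that each horospherical $G$-stack $\X_i'$ has finitely many $G$-orbits. So I reduce to the case $\X = [X/K]$ with $X$ a horospherical $G\times T$-variety and $K$ a subgroup of $\Aut^{G\times T}(X)$ containing $T$. Here I would use the quotient morphism $q\colon X\to[X/K]=\X$, which is a $K$-torsor, hence faithfully flat and surjective on $k$-points and $G$-equivariant (where $G$ acts on $X$ via $G\hookrightarrow G\times T$). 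The $G$-orbits in $\X$ are exactly the images under $q$ of the $(G\times T)$-orbits — more precisely, of the $GK$-orbits — in $X$: since $K$ contains $T$ and $X$ has an open $(G\times T)$-orbit, the $GK$-orbits coincide with the $(G\times T)$-orbits on the open part, and in general the $G$-orbits of $[X/K]$ pull back to $GK$-stable subsets of $X$ which are unions of $(G\times T)$-orbits. The spherical variety $X$ has only finitely many $(G\times T)$-orbits (this is a classical theorem, e.g. \cite{Kno91, Per14}, and in any case holds for horospherical varieties by their combinatorial classification via colored fans \cite{Pas06,Pas08}); grouping these finitely many orbits into $GK$-orbits and pushing forward along $q$ yields finitely many $G$-orbits in $\X$.

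The main obstacle I anticipate is the bookkeeping in the last step: carefully identifying the $G$-orbits of the quotient stack $[X/K]$ with images of orbits in $X$, taking into account that $K$ is an extension of a torus inside $\Aut^{G\times T}(X)$ rather than a subgroup of $T$ itself, and that the notion of ``$G$-orbit of a stack'' here is the set-theoretic one on $k$-isomorphism classes of $\X(k)$ (as defined in Section \ref{section: group actions}), not a substack structure. One clean way to handle this is to note that the $G$-action on $\X(k)$ factors through the $(G\times T)/(K\cap T)$-action — or simply observe that two $k$-points $q(x_1), q(x_2)$ of $\X$ lie in the same $G$-orbit if and only if there is $g\in G$ with $g\cdot x_1$ and $x_2$ in the same $K$-orbit of $X(k)$, i.e. $x_2 \in (GK)\cdot x_1$; hence $G$-orbits of $\X$ correspond bijectively to $GK$-orbits of $X$, and there are finitely many of the latter since there are finitely many $(G\times T)$-orbits. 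A secondary point worth a sentence is checking that $\X'\to\X$ being surjective as a morphism of stacks implies surjectivity on sets of $k$-isomorphism classes, which follows because a proper surjective morphism to a finite type stack over an algebraically closed field is surjective on $k$-points.
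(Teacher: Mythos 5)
Your proposal is correct and follows essentially the same route as the paper's proof: toroidify via Proposition \ref{prop:tor}, check that affine diagonal and reductive inertia groups are inherited, reduce via Theorem \ref{thm:structure_theorem_for_toroidal}(1) to a single horospherical $G$-stack $[X/K]$, and conclude from the finiteness of $G'$-orbits on the horospherical variety $X$. Your extra care in matching $G$-orbits of $[X/K]$ with $G\cdot K$-orbits of $X$ (rather than just $G'$-orbits) is a reasonable refinement of the paper's one-line identification, but the argument is the same.
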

\begin{proof}
	Let $\X'\to \X$ be the toroidification morphism of Proposition \ref{prop:tor}. Since $\X$ has affine diagonal and reductive inertia groups, it follows that $\X'$ has affine diagonal and reductive inertia groups. Indeed, for all $x'$ in $\X'(k)$ with image $x$ in $\X(k)$, as the subgroup $\mathrm{Im}(I_{x'}\to I_x)$ is of finite index in $I_x$, we have that $I_{x'}^{0} = I_{x}^{0}$, and thus $I_{x'}$ is reductive. To see that the diagonal of $\X'$ is affine, it suffices to show that the diagonal of $\X'\to \X$ is affine. To do so, as the property of having affine diagonal is fppf local on the target, we may and do assume that $\X$ is a scheme. Since $\X'\to \X$ is representable, it follows that $\X'$ is an algebraic space. Since  $\X'\to \X$ is a proper morphism of algebraic spaces, it follows that the diagonal of $\X'\to \X$ is a closed immersion hence affine.   
	
	Now, as the toroidification morphism $\X'\to \X$ is $G$-equivariant and surjective, to prove the corollary, it suffices to show that $\X'$ has only finitely many $G$-orbits. Thus, we may and do assume that $\X = \X'$, so that $\X$ is a toroidal abstract horospherical $G$-stack.
	
	Now,  by  Theorem \ref{thm:structure_theorem_for_toroidal} $(1)$, there exist an integer $n\geq 1$ and an open covering of $\X$ by horospherical $G$-substacks $\X_1,\ldots, \X_n$. As the statement of the corollary is local on $\X$, we may and do assume that $n=1$, so that $\X$ is a toroidal horospherical $G$-stack.
	
	Finally, write  $\X =[X/K]$ with $X$  a toroidal horospherical $G'$-variety and $K$ a subgroup of its torus. Note that the $G$-orbits of $\X$ correspond one-to-one to the $G'$-orbits of $X$. Then, as $X$ has  only finitely many $G'$-orbits \cite[Theorem 2.1.2]{Per14}, it follows that $\X$ has only finitely many $G$-orbits.
\end{proof}

\begin{lemma}\label{lem:codimension}
	Let $\Y\to \X$ be a representable proper {birational morphism} of finite type integral algebraic stacks over  $k$. Let $D\subseteq \X$ be a closed integral substack, and let $D'$ be an irreducible component of its preimage which surjects onto $D$. Then $$\mathrm{codim}(D,\X) \geq \mathrm{codim}(D',\Y).$$
\end{lemma}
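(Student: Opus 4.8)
The statement is a standard comparison of codimensions under a proper birational morphism, so the plan is to reduce to the case of schemes by choosing a smooth presentation, and then to invoke the valuative/local-ring description of codimension. First I would pick a smooth surjective morphism $U \to \X$ with $U$ a scheme, and form the base change $V := U \times_{\X} \Y$, which comes with a smooth surjective morphism $V \to \Y$ (so $V$ is a scheme since $\Y \to \X$ is representable) together with a proper birational morphism $g : V \to U$ of finite type integral schemes over $k$. Codimension of a closed substack is computed on such a presentation: $\mathrm{codim}(D,\X) = \mathrm{codim}(D_U, U)$ for $D_U := D \times_\X U$ (and similarly for $D'$ on $V$), and the components of $D_U$, $D'_V$ dominate those of $D$, $D'$ respectively because $U \to \X$ and $V \to \Y$ are flat. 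So it suffices to prove the corresponding inequality for schemes: if $g : V \to U$ is a proper birational morphism of integral finite type $k$-schemes, $D_U \subseteq U$ a closed integral subscheme, and $D'_V$ an irreducible component of $g^{-1}(D_U)$ surjecting onto $D_U$, then $\mathrm{codim}(D_U, U) \geq \mathrm{codim}(D'_V, V)$.

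For the scheme-level statement, the key point is that $g$ is birational, so there is a common open dense $W \subseteq U$ over which $g$ is an isomorphism; hence $\mathrm{codim}(D'_V, V) \leq \mathrm{codim}(g^{-1}(\eta_U), V)$ where I track the generic point. Concretely, let $\xi$ be the generic point of $D'_V$ and $\eta$ its image, the generic point of $D_U$ (since $D'_V$ surjects onto $D_U$). Then $\mathrm{codim}(D'_V, V) = \dim \mathcal{O}_{V,\xi}$ and $\mathrm{codim}(D_U, U) = \dim \mathcal{O}_{U,\eta}$, and the local ring map $\mathcal{O}_{U,\eta} \to \mathcal{O}_{V,\xi}$ is a local homomorphism of Noetherian local domains which is, moreover, an extension of domains with the same fraction field (birationality) — more precisely $\mathcal{O}_{V,\xi}$ lies between $\mathcal{O}_{U,\eta}$ and $\mathrm{Frac}(U)$. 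Since $g$ is proper, $\mathcal{O}_{V,\xi}$ is dominated by (equivalently, $\xi$ specializes appropriately along) the valuative criterion, but the cleanest route is: any ring $R$ with $\mathcal{O}_{U,\eta} \subseteq R \subseteq \mathrm{Frac}(\mathcal{O}_{U,\eta})$ satisfies $\dim R \leq \dim \mathcal{O}_{U,\eta}$, because every chain of primes in $R$ contracts to a chain of primes in $\mathcal{O}_{U,\eta}$ (domination plus the going-down-type argument for birational extensions, or simply: localizing $R$ at a maximal chain still has the same fraction field, so by the dimension formula for finitely generated domains over $k$, the transcendence degree is unchanged, hence $\dim R \le \dim \mathcal{O}_{U,\eta}$). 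Applying this with $R = \mathcal{O}_{V,\xi}$ gives $\mathrm{codim}(D'_V,V) = \dim \mathcal{O}_{V,\xi} \leq \dim \mathcal{O}_{U,\eta} = \mathrm{codim}(D_U,U)$, as desired. Pulling this back up to the stacks via the presentations completes the argument.

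\textbf{Main obstacle.} The genuinely nontrivial input is the dimension inequality $\dim \mathcal{O}_{V,\xi} \leq \dim \mathcal{O}_{U,\eta}$ for the local rings, i.e. that a birational proper morphism cannot increase codimension of a subvariety that dominates its image. Over finite-type $k$-schemes this follows from the dimension formula $\dim \mathcal{O}_{U,\eta} = \dim U - \dim \overline{\{\eta\}}$ together with $\dim V = \dim U$ (birationality) and $\dim \overline{\{\xi\}} = \dim \overline{\{\eta\}}$ (since $D'_V \to D_U$ is surjective, hence $\dim D'_V \geq \dim D_U$, while properness and integrality force $\dim D'_V \le \dim D_U$ as $D'_V$ maps into $D_U$ with generic fiber of dimension $\dim D'_V - \dim D_U \geq 0$, and one uses birationality of $g$ to see the generic fiber of $D'_V \to D_U$ is finite — indeed generically an isomorphism on the open set $W$). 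So $\dim D'_V = \dim D_U$ and the codimensions agree in this dominating-component case, giving in fact equality; the inequality in the statement is the safe formulation when $D'_V$ might a priori not be the only such component. I would present this carefully using the dimension formula for excellent/finite-type-over-field schemes to avoid any subtlety, and note that everything is compatible with the smooth base change to the presentation since smooth morphisms preserve both relative dimension and the formation of generic points of irreducible components.
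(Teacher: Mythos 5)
Your reduction to a presentation is exactly what the paper does: it pulls back along a smooth surjective $P\to\X$, notes that smooth finite type morphisms preserve codimension, and then declares the resulting statement for algebraic spaces ``well-known''; you go one step further and actually supply that well-known argument via the dimension formula for finite type $k$-schemes ($\mathrm{codim} = \dim(\text{ambient}) - \dim(\text{subscheme})$, $\dim V = \dim U$ by birationality, $\dim D'_V \ge \dim D_U$ by surjectivity). That core argument is correct. One small caveat on the reduction: in the paper's conventions ``representable'' means representable by algebraic spaces, so $V = U\times_\X\Y$ is a priori only an algebraic space; you need one further \'etale cover by a scheme (harmless, since \'etale morphisms also preserve codimension), and after base change $D_U$, $D'_V$ need not be irreducible, so one has to select matching irreducible components — standard but worth a sentence.

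There is, however, one genuinely false side claim in your last paragraph: the assertion that $\dim D'_V \le \dim D_U$, hence that equality $\mathrm{codim}(D,\X)=\mathrm{codim}(D',\Y)$ holds ``in this dominating-component case.'' Your justification — that $g$ is generically an isomorphism on $W$, so the generic fiber of $D'_V\to D_U$ is finite — fails because $W$ need not meet $D_U$ at all. The blow-up of a point $p$ on a surface is a counterexample: $D_U=\{p\}$ has codimension $2$, its preimage is the exceptional curve $D'_V=E$, which surjects onto $\{p\}$ and has codimension $1$; the inequality is strict and the generic fiber of $E\to\{p\}$ is one-dimensional. This does not damage your proof of the lemma, since only the inequality $\dim D'_V\ge\dim D_U$ (from surjectivity) is used for the stated bound, but the equality claim should be deleted.
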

\begin{proof}
	Let $P\to \X$ be a smooth finite type surjective morphism with $P$ a scheme. As $\Y\to \X$ is representable, it follows that $\Y\times_\X P$ is an algebraic space. Since smooth finite type morphisms are codimension preserving, we may and do assume that $\Y$ and $\X$ are algebraic spaces in which case the statement of the lemma is well-known.
\end{proof}

\begin{proposition}\label{prop:codimension_one_case}
Let $\X$ be an abstract horospherical $G$-stack such that the diagonal of $\X$ is affine, and the geometric points of $\X$ have reductive inertia groups. Suppose that $\codim(\overline{G.x},\X) \leq 1$ for all $x$ in $\X(k)$. Then $\X$ is a  smooth toroidal horospherical stack.
\end{proposition}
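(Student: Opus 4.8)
The plan is to combine the toroidification morphism of Proposition~\ref{prop:tor} with Theorem~\ref{thm:structure_theorem_for_toroidal}(2), using the codimension hypothesis to control the exceptional locus of the toroidification. First I would apply Proposition~\ref{prop:tor} to obtain a toroidal abstract horospherical $G$-stack $\X'$ together with a representable proper birational $G$-morphism $p:\X'\to\X$ inducing finite index inclusions on inertia groups. As in the proof of Corollary~\ref{cor: finite G orbits}, the stack $\X'$ again has affine diagonal and reductive inertia groups. The key point is that the indeterminacy locus of the rational map $\X\dashrightarrow G/P$ is a $G$-stable closed substack $Z$ of $\X$, which is a union of orbit closures $\overline{G.x}$; since $p$ is an isomorphism over $\X\setminus Z$ and $Z$ has codimension $\geq 2$ in $\X$ by general principles (resolution of a rational map to a proper scheme from a normal stack never introduces indeterminacy in codimension one, as $\X$ is nonsingular in codimension one), the hypothesis $\codim(\overline{G.x},\X)\leq 1$ for all $x$ forces $Z=\emptyset$. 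Hence the rational map $\X\dashrightarrow G/P$ is already a morphism, i.e. $\X$ is itself toroidal.

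Next, knowing $\X$ is toroidal, I would invoke Lemma~\ref{lem:gen_toroidal} to write $\X\cong G\times^P\Y$ with $\Y$ an integral normal finite type $\T$-stack with a dense open substack $\T$-equivariantly isomorphic to $\T$, having affine diagonal and reductive inertia groups. The codimension condition on $G$-orbits of $\X$ translates (via the Zariski-local triviality of $G\times^P\Y\to G/P$ and the correspondence between $G$-orbits of $\X$ and $\T$-orbits of $\Y$) into the condition that $\codim(\overline{\T.y},\Y)\leq 1$ for all $y\in\Y(k)$. The point is then to show $\Y$ is smooth; the cleanest route is to apply the codimension-one analysis for abstract toric stacks, or directly argue that a normal toric-type stack all of whose orbit closures have codimension at most one is covered by open substacks corresponding to the cone $\{0\}$ and the rays, which are smooth. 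Once $\Y$ is smooth, $\X\cong G\times^P\Y$ is smooth since $G\to G/P$ is a Zariski-locally trivial $P$-bundle and $G/P$ is smooth.

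Finally, with $\X$ smooth and toroidal, Theorem~\ref{thm:structure_theorem_for_toroidal}(2) immediately yields that $\X$ is a horospherical $G$-stack, completing the proof.

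\textbf{Main obstacle.} The delicate step is the claim that the indeterminacy locus of $\X\dashrightarrow G/P$ has codimension at least $2$ in $\X$; one must argue carefully that a $G$-equivariant rational map from a normal $G$-stack, which is defined on the codimension-one open locus (since the target is proper and $\X$ is regular in codimension one), is in fact automatically defined in codimension one, so that the codimension-$\leq 1$ orbit hypothesis rules out any remaining indeterminacy. Making this precise for algebraic stacks — rather than schemes — is where the real work lies, though it can plausibly be reduced to the scheme case via a smooth presentation as in Lemma~\ref{lem:codimension}.
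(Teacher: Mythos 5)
Your proposal is correct in outline but reaches the conclusion by a genuinely different route in the two key steps. For toroidality, the paper does \emph{not} argue that $\X\dashrightarrow G/P$ is already a morphism; instead it takes the toroidification $f:\X'\to\X$ of Proposition \ref{prop:tor} and proves that $f$ is an isomorphism by Zariski's Main Theorem, reducing to quasi-finiteness of $f$, which is then checked orbit by orbit using the finiteness of $G$-orbits (Corollary \ref{cor: finite G orbits}), Lemma \ref{lem:codimension}, and a generic-quasi-finiteness plus $G$-translation argument. Your route instead rests on the claim that the indeterminacy locus of a rational map from a normal stack to a proper scheme has codimension at least $2$; this is standard for normal schemes (valuative criterion at codimension-one points) and should descend to stacks along a smooth presentation, but it is precisely the fact the paper never proves and is structured to avoid --- you rightly flag it as the main obstacle, and if you want to use it you must actually carry out the descent of the maximal domain of definition along the groupoid $P\times_\X P\rightrightarrows P$. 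For smoothness, the paper's argument is a one-liner you could adopt verbatim: the singular locus is a $G$-stable closed substack (Lemma \ref{lem:singular_locus}) of codimension at least $2$ by normality, so if nonempty it would contain some $\overline{G\cdot x}$ of codimension at least $2$, contradicting the hypothesis; your detour through $\Y$, the Geraschenko--Satriano local structure theorem, and fans consisting of $\{0\}$ and rays does work but is considerably heavier and, note, in your ordering you invoke the toric local structure theorem for the not-yet-known-to-be-smooth $\Y$, so you must use \cite[Theorem 4.5]{GS15II} rather than Theorem \ref{th GS}. In short: your argument buys a more conceptual explanation of why $\X$ is toroidal at the price of an auxiliary codimension lemma for stacks, while the paper buys self-containedness at the price of the quasi-finiteness bookkeeping; both end by applying Theorem \ref{thm:structure_theorem_for_toroidal}(2).
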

\begin{proof}
Let $x$ be a singular object of $\X(k)$. Since the singular locus of $\X$ is a $G$-stable closed substack (Lemma \ref{lem:singular_locus}) and of codimension at least two (by the normality of $\X$), we see that   $\overline{G\cdot x}$ is of codimension at least two in $\X$. This contradicts our assumption that $\codim(\overline{G\cdot x},\X) \leq 1$ for all $x$ in $\X(k)$. It follows that $\X$ is smooth.

Let $f:\X'\to \X$ be a toroidification (Proposition \ref{prop:tor}). To conclude the proof, by Theorem \ref{thm:structure_theorem_for_toroidal},  it suffices to show that $f$ is an isomorphism.  

 As $f: \X'\to \X$ is a representable proper birational   morphism and $\X$ is an integral normal (even nonsingular) algebraic stack, it follows from Zariski's Main Theorem  \cite[Th\'eor\`eme~16.5]{LMB00}  that $\X'\to \X$ is an isomorphism provided that $f$ is quasi-finite. Thus, to conclude the proof, it suffices to show that $f$ is quasi-finite. 

To do so, 
let $x\in \X(k)$  be an object which is not a point of the open orbit $G/H$.  
Let $\mathcal Z$ be the inverse image of $\overline{G\cdot x}$ in $\X'$. Since   $\X'$ has only finitely many $G$-orbits (Corollary \ref{cor: finite G orbits}), there exist an integer $n\geq 1$ and objects $x_1',\ldots, x_n'$ in $\X'$ such that $\mathcal Z = \bigcup_{i=1}^n \overline{G\cdot x_i'}$.  

Note that $\overline{G\cdot x}$ is of codimension one in $\X$, by our assumption.  In particular, for all $i\in \{1,\ldots,n\}$, it follows that the closed substack $\overline{G\cdot x_i'}$ is of codimension one in $\X'$ (Lemma \ref{lem:codimension}). Therefore, pulling-back the latter morphism along a presentation of $\X$, a dimension argument (applied to the pull-back of $\X'\to\X$ along a  presentation of $\X$) shows that $\overline{G\cdot x_i'}\to \overline{G\cdot x}$  is generically quasi-finite.  (Here we only need that the codimension of $\overline{G\cdot x_i'}$ equals the codimension of $\overline{G\cdot x}$.)

Thus, for all $i\in \{1,\ldots, n\}$, there is a dense open $U_{i}$ of $\overline{G\cdot x}$ over which $\overline{G\cdot x_i'}\to \overline{G\cdot x}$ is quasi-finite. Let $U$ be the intersection of all $U_i$. Then, for all $i\in \{1,\ldots,n\}$, the morphism $\overline{G\cdot x'_i}\to \overline{G\cdot x}$ is quasi-finite over $U$. 

Since $U\subseteq \overline{G\cdot x}$ is a dense open, the union $V:=\bigcup_{g\in G} gU$ is a  $G$-stable dense open of $\overline{G\cdot x}$. Since $\X'\to \X$ is $G$-equivariant and quasi-finite over $U$, the morphism  $\X'\to \X$ is quasi-finite over $V$. We now show that $V= \overline{G\cdot x}$. To do so, we argue by contradiction.  Thus, let us assume that $V\neq \overline{G\cdot x}$. 

 Let $W$ be the complement of $V$ in $\overline{G\cdot x}$. Note that $W$ is a  {$G$-stable} closed substack of codimension at least one in $\overline{G\cdot x}$. In particular, $W$ is of codimension at least two in $\X$. Let $w$ be an object of  $W(k)$. Then  the closed substack $\overline{G\cdot w}$ is contained in $W$ and therefore is of codimension at least two in $\X$. This contradicts our assumption that $\codim(\overline{G\cdot x},\X) \leq 1$ for all $x$ in $\X(k)$.  Hence, $V = \overline{G\cdot x}$ and $f$ is quasi-finite.
\end{proof}

\section{Towards the general case}\label{section:towards} 
In this last section we first discuss Conjecture \ref{conj}. We prove Conjecture \ref{conj}, under suitable assumptions; see Lemma \ref{lem:conjquotientstack0} and Proposition \ref{prop:conjquotientstack2}. Finally, we use the theory of Cox rings and our results so far to  prove Theorems \ref{thm2} and \ref{thm3}.

\subsection{About Conjecture \ref{conj}}
We restate our conjecture for the reader's convenience.

\begin{conjecture*}[Criterion for quasi-affineness] Let $G$ be a connected reductive algebraic group.
Let $ \X$ be a smooth integral finite type algebraic stack over $k$ with affine diagonal, $\Pic(\X)=0$, and diagonalizable inertia groups. Suppose that $ \X$ contains a big open substack $\Y$. If $\Y$ is a (smooth) quasi-affine scheme and $\X$ is an abstract horospherical $G$-stack, then $  \X$ is a quasi-affine scheme. 
\end{conjecture*}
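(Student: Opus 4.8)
The strategy is to reduce the conjecture to the known toric case (or a direct argument for tori) by a combination of the descent/structure results already available. First, since $\X$ is an abstract horospherical $G$-stack, I would apply the toroidification of Proposition \ref{prop:tor} to obtain a representable proper birational morphism $\X' \to \X$ with $\X'$ toroidal, inducing finite index inclusions on inertia groups; combined with the smoothness hypothesis on $\X$ one hopes to control how the two stacks differ over the big open $\Y$. The key point is that quasi-affineness can be detected cohomologically: a smooth integral finite type algebraic stack with affine diagonal is quasi-affine if and only if it has trivial Picard group, vanishing higher cohomology of $\mathcal O$, trivial inertia groups (i.e., is an algebraic space, hence by Lemma \ref{lem:algspace_is_scheme}-type arguments a scheme), and the canonical map to $\Spec \Gamma(\X,\O_\X)$ is an open immersion. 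So the heart of the matter is: (i) show $\X$ has trivial inertia groups, i.e., is an algebraic space; (ii) show the global functions separate points and the canonical morphism is an open immersion.

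For step (i), the plan is to use that the inertia groups of $\X$ are diagonalizable (Proposition \ref{prop:inertia_groups_are_always_diag}, or here assumed outright), together with $\Pic(\X) = 0$ and the existence of the big quasi-affine open $\Y$ which has \emph{trivial} inertia. Since $\Y$ is big, any line bundle or character-twisted structure extends across the complement (codimension $\geq 2$, normality); the idea is that a nontrivial diagonalizable inertia group at a point would force, via the local structure theorem (Luna slices, Theorem \ref{thm:AHR}), a nontrivial torus-equivariant line bundle or a nontrivial class in $\Pic$, contradicting $\Pic(\X)=0$ and $\Pic(\Y) = \Pic(\X)$ (which holds since $\Y$ is big in the smooth normal stack $\X$). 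More concretely, near a stacky point $x$ with inertia $G_x$ a diagonalizable group, a Luna slice gives $[Z/G_x] \to \X$ \'etale; the characters of $G_x$ produce line bundles on $[Z/G_x]$, and one argues these must come from $\X$ and be trivial, forcing $G_x$ to act trivially, i.e., $G_x = 1$. Hence $\X$ is an algebraic space, and since it is an abstract horospherical $G$-stack which is an algebraic space, one invokes Hausen's algebraicity criterion (mentioned in the introduction) or a Sumihiro-type covering argument to conclude it is a scheme — in fact a horospherical $G$-variety.

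For step (ii), once $\X$ is known to be a (normal, integral, smooth) horospherical $G$-variety with $\Pic(\X)=0$ containing a big quasi-affine open $\Y$, I would use the theory of Cox rings / the combinatorial description of horospherical varieties: $\Pic(\X) = 0$ together with the existence of a big quasi-affine open forces the colored fan of $\X$ to be "small" enough that $\X$ itself is quasi-affine. Concretely, a normal variety is quasi-affine iff it admits an open immersion into an affine variety with complement of codimension... no — iff $\Gamma(\X,\O_\X)$ is finitely generated and the canonical map $\X \to \Spec\Gamma(\X,\O_\X)$ is an open immersion; for a horospherical variety this can be checked on the level of fans, and $\Pic = 0$ plus bigness of a quasi-affine open is exactly what rules out complete orbit closures or divisors that would obstruct quasi-affineness. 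Since $\Gamma(\X,\O_\X) = \Gamma(\Y,\O_\Y)$ (bigness + normality) and $\Y$ is quasi-affine, the ring is finitely generated and the map $\X \to \Spec\Gamma$ restricts to the open immersion $\Y \hookrightarrow \Spec\Gamma$; a codimension and valuative-criterion argument then upgrades this to $\X$ being an open immersion as well.

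\textbf{Main obstacle.} The hardest part is step (i): ruling out nontrivial inertia. The difficulty is that $\Pic(\X) = 0$ does not immediately kill torus inertia of a stack — one needs to leverage bigness of $\Y$ to compare $\Pic$ of $\X$ with that of a Luna slice chart and track characters carefully, and there is a genuine subtlety because the normalization/toroidification morphisms need not be stabilizer-preserving (as the excerpt's remarks on the nodal cubic and on $S_3$ emphasize). This is presumably why the statement is left as a conjecture rather than a theorem in general, and why the authors can only prove it under extra hypotheses (the toroidal case via Theorem \ref{thm:structure_theorem_for_toroidal}, and the horospherical-stack case via Proposition \ref{prop:conjquotientstack2}). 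The plan above is therefore really a plan for \emph{how one would attack} the conjecture, with the torus-inertia-elimination step being the crux that currently resists a fully general argument.
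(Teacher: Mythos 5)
The statement you are proving is stated in the paper as a \emph{conjecture}, and the paper does not prove it: it only establishes special cases (Remark \ref{cor:first_case}, Proposition \ref{prop:conjquotientstack2}, Corollaries \ref{cor:second_case} and \ref{cor:third_spec_case}). You correctly recognize this and offer a plan of attack rather than a proof, so there is no complete argument here to certify. The overall shape of your plan does match what the paper does where it can: reduce to showing the inertia is trivial by playing characters of the stabilizer against the vanishing of $\Pic$/$\Cl$, and then conclude quasi-affineness of the resulting scheme from smoothness, affine diagonal and $\Pic=0$ (this last step is exactly Remark \ref{remark:schemes}; your additional cohomological conditions are not needed). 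The paper's Lemma \ref{lem:conjquotientstack0} and Proposition \ref{prop:conjquotientstack2} implement the "characters inject into $\Cl$" idea via the exact sequence of Knop--Kraft--Vust, after using the local structure theorem for horospherical varieties to reduce to an affine slice $Z=Z_1\times Z_2$; Lemma \ref{lem:chars_inject} implements it under a global fixed-point hypothesis.

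The genuine gap is in your step (i), and it is worth naming precisely: the mechanism you propose --- a Luna slice $[Z/G_x]\to\X$ at a stacky point, plus extension of line bundles across the codimension-two complement of $\Y$, plus $\Pic(\X)=0$ --- makes no use of the hypothesis that $\X$ is an abstract horospherical $G$-stack. But Remark \ref{remark:kresch1} exhibits $\X=[W/A_5]$, a smooth integral finite type stack with affine diagonal, $\Pic(\X)=0$, diagonalizable (finite abelian) inertia, and a big quasi-affine open substack, which is \emph{not} an algebraic space. So any argument killing the inertia from those hypotheses alone would prove a false statement; the character-versus-Picard bookkeeping must be anchored to something global coming from the group action (a fixed point as in Lemma \ref{lem:chars_inject}, or the explicit slice $Z_1\times Z_2$ with its unique $\T'$-orbit on $Z_1$ as in Proposition \ref{prop:conjquotientstack2}), not to an \'etale-local chart, since \'etale morphisms do not let you pull the resulting line bundles back and forth between $[Z/G_x]$ and $\X$ in the way your sketch requires. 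This is exactly why the statement remains a conjecture, and your own closing paragraph essentially concedes the point; the review simply makes explicit that the obstruction is witnessed by a concrete example already in the paper.
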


\begin{remark}\label{remark:schemes}
If $X$ is a smooth integral finite type scheme over $k$ with affine diagonal and trivial Picard group, then $X$ is  quasi-affine. Indeed, by \cite[Tag 01QE]{stacks-project} the scheme $X$ is quasi-affine if and only if its structure sheaf $\O_X$ is ample. To show that $\mathcal{O}_X$ is ample, we verify the conditions in \cite[Tag~01PS]{stacks-project}. To do so, let $x$ be a point of $X$, and let $U$ be an affine open subset of $X$ containing $x$. If $U=X$, then we are done. Thus, we may assume that $U\neq X$. The complement $D$ of $U$ in $X$ is pure of codimension one (as can be shown using \cite[Tag~0BCW]{stacks-project} and the fact that the diagonal of the smooth scheme $X$ over $k$ is affine). Let $s$ be a section of $\mathcal{O}_X(D)$ such that $\mathrm{div}(s) = D$.  Since $\Pic(X) = 0$, we see that $s$ is a section of $\mathcal{O}_X(D) \cong \mathcal{O}_X$.  Moreover, $X_s =U$ is affine and contains $x$. This shows that $\mathcal{O}_X$ is ample.
\end{remark}

 \begin{remark}
Let $X$ be the affine plane with a double origin over $k$. Then $X$ is a smooth finite type integral scheme over $k$ whose  diagonal is not affine. Indeed, $X$ contains two open subschemes $U$ and $V$ isomorphic to $\mathbb{A}^2_k$ whose intersection $U\cap V$ is isomorphic to $\mathbb{A}^2_k-\{0\}$. Moreover, $\mathrm{Pic}(X) = \mathrm{Cl}(X) =0$, and $X$ contains a big open subset $Y$ isomorphic to $\mathbb{A}^2_k-\{0\}$. Therefore, Conjecture \ref{conj} is false if one drops the hypothesis on the diagonal.
 \end{remark}

\begin{remark}[Kresch]\label{remark:kresch1} In Conjecture \ref{conj}, we can not remove the condition that $\X$ is an abstract horospherical $G$-stack.    Indeed, 
 let $F:=A_5$ be the alternating group, and let $F\to \mathrm{GL}(V)$ be the restriction of the standard linear representations of the symmetric group $S_5$.    Let $Y$  be the complement of the codimension two diagonals. (These diagonals are given by $x_i = x_j= x_k$ with $i,j,k$ pairwise distinct or by $x_i=x_j$ and $x_k=x_l$ with $i,j,k,l$ pairwise distinct in $V$.)  Note that $F$ acts stabilizer-free on the big open $Y$ of $V$. 

The stack $\mathcal{Z}:=[V/F]$ is not an algebraic space, as the origin in $V$ is a fixed point for the action of $F$. However, it is a smooth finite type separated Deligne-Mumford integral algebraic stack  with affine coarse space, $\Pic(\mathcal{Z})=0$, and reductive (finite) inertia groups. Moreover, the stack $\mathcal{Z}$ contains a big quasi-affine open $\Y := [Y/F]=Y/F$.  (To show that $\Pic(\mathcal{Z}) =0$, note that $F=A_5$ has no non-trivial characters, and  that the Picard group of $V$ is trivial. Since $V^F\neq \emptyset$ and $\mathcal{O}(V)^\times = k^\times$, it follows from    \cite[Corollary~5.3]{KKV89} that $\Pic(\mathcal{Z}) := \Pic_F(V) =0$.) 

Now, to give the desired example, let $W$ be the complement of the codimension three diagonals in $V$. (Note that the codimension three diagonals in $V=\mathbb{A}^5$ are given by $x_i= x_j =x_k =x_l$ with $i,j,k,l$ pairwise distinct or $x_i = x_j = x_k$ and $x_l=x_m$ with $i,j,k,l,m$ pairwise distinct.) The smooth finite type separated Deligne-Mumford stack $\X:=[W/F]$ has non-trivial inertia groups, and they are all finite abelian groups (either $\mathbb{Z}/2\mathbb{Z}$ or $\mathbb{Z}/3\mathbb{Z}$). Since $\X$ is not a scheme and contains a big quasi-affine open substack, this shows that we can not remove the condition that $\X$ is an abstract horospherical $G$-stack in Conjecture \ref{conj}. (Note that, as above, it follows from   \cite[Corollary~5.3]{KKV89} that $\Pic(\X) =0$.)
\end{remark}
 
  We now establish Conjecture \ref{conj} for certain stacks; see  Remark \ref{cor:first_case}, Corollary \ref{cor:second_case}, and Corollary \ref{cor:third_spec_case}.  To prove our results, we will frequently use the following well-known result.

\begin{lemma}\label{lem:surj_of_pic} Let $\X$ be a smooth finite type integral algebraic stack with quasi-compact and separated diagonal over $k$. 
Let $\Y$ be a dense open substack of $\X$.  Then, the natural homomorphism $\Pic(\X)\to \Pic(\Y)$ is surjective.
\end{lemma}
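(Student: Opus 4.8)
The plan is to reduce the statement to the corresponding fact for schemes, which is classical: on a smooth integral scheme, every line bundle on a dense open extends (its divisor of zeros and poles extends by taking Zariski closures, and Weil divisors are Cartier by smoothness). First I would choose a smooth surjective presentation $P \to \X$ with $P$ a scheme; since the diagonal of $\X$ is quasi-compact and separated, $P$ can be taken of finite type over $k$, and smoothness of $\X$ makes $P$ a smooth scheme. Set $Q := P \times_{\X} \Y$, which is a dense open subscheme of $P$ (density and openness are preserved by the flat base change $P \to \X$). The key point is that $\Pic$ of an algebraic stack with this kind of diagonal can be computed via descent along the presentation: a line bundle on $\X$ is the same as a line bundle on $P$ together with a descent datum on $P \times_\X P$ satisfying the cocycle condition on $P \times_\X P \times_\X P$, and likewise for $\Y$ with $Q$.

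The main step is then: given a line bundle $\L$ on $\Y$, pull it back to get a line bundle $\L_Q$ on the dense open $Q \subseteq P$. Since $P$ is a smooth integral (or at least regular) scheme, $\L_Q$ extends to a line bundle $\L_P$ on $P$; concretely, write $\L_Q = \O_Q(D_Q)$ for a Cartier divisor $D_Q$ (using that $\Cl(Q) = \Pic(Q)$), take the Zariski closure $\overline{D_Q}$ in $P$, and use regularity of $P$ to see $\O_P(\overline{D_Q})$ is a line bundle restricting to $\L_Q$. One must be slightly careful here if $P$ is not connected, but then one works component by component, or more cleanly invokes the surjectivity of $\Pic(P) \to \Pic(Q)$ for regular schemes directly (e.g. \cite[Tag 0BCW]{stacks-project} combined with exactness of the divisor sequence). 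Now I need to promote this extension to a descent datum over $P \times_\X P$. Pulling back $\L$ along the two projections $Q \times_\Y Q \to \Y$ gives an isomorphism $\varphi$ between the two pullbacks of $\L_Q$, i.e. a descent datum on the dense open $Q \times_\X Q = Q \times_\Y Q$ of the regular scheme $P \times_\X P$. This $\varphi$ is an isomorphism of line bundles on a dense open of an integral regular scheme; I would argue it extends to an isomorphism of the two pullbacks of $\L_P$ over all of $P \times_\X P$ — either because a rational isomorphism of line bundles that is an isomorphism in codimension one (automatic here, as the complement has codimension $\geq 1$ and one checks it is actually an isomorphism wherever both extensions are defined, using normality/regularity to control the discrepancy divisor) or simply because $\Pic(P\times_\X P) \to \Pic$ of the dense open is injective for regular integral schemes, forcing the two extensions to agree up to a canonical isomorphism.

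The last step is verifying the cocycle condition for this extended descent datum on $P \times_\X P \times_\X P$: the cocycle identity holds on the dense open obtained by base change from $\Y$, both sides are morphisms of line bundles on the regular integral scheme $P \times_\X P \times_\X P$, and two such morphisms agreeing on a dense open agree (a morphism of line bundles on an integral scheme is determined by its restriction to any nonempty open). Hence the descent datum descends to a line bundle on $\X$ whose restriction to $\Y$ is $\L$, proving surjectivity of $\Pic(\X) \to \Pic(\Y)$.

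\textbf{Main obstacle.} The genuinely delicate point is the bookkeeping with the presentation: one must ensure that $P \times_\X P$ (and the triple fiber product) are again regular integral schemes — or at least regular with the property that $\Pic$ injects into $\Pic$ of a dense open, and that ``dense open'' is preserved throughout — since the whole argument rests on extending line bundles and isomorphisms across small loci on these auxiliary schemes. If $P \times_\X P$ fails to be integral (it can be disconnected or even nonreduced in degenerate situations, though smoothness of $\X$ prevents the latter), one handles each connected component separately, which is harmless but must be said. Everything else is the standard ``divisors extend on regular schemes'' mechanism applied levelwise in the descent presentation.
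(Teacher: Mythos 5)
Your overall strategy — compute $\Pic$ by descent along a smooth presentation $P\to\X$, extend the line bundle from the dense open $Q=P\times_\X\Y$ to $P$ using local factoriality, then extend the descent datum — is different from the paper's, and it has a genuine gap at exactly the step you wave at: extending the descent isomorphism $\varphi$ across $R\setminus V$, where $R=P\times_\X P$ and $V=Q\times_\X Q$. After choosing an extension $\L_P$ of $\L_Q$, the datum $\varphi$ is a nowhere-vanishing section over $V$ of the line bundle $\mathrm{pr}_1^*\L_P^{\vee}\otimes\mathrm{pr}_2^*\L_P$ on $R$; it extends to an isomorphism over $R$ if and only if its divisor, which is supported on $R\setminus V$, vanishes. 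But $R\setminus V$ has codimension one in general (that is the only hard case of the lemma), so neither of your two justifications applies. "Isomorphism in codimension one" is precisely what must be proved, not what is automatic: at the generic points of the boundary divisors $\varphi$ is a priori only a rational map. And the claim that $\Pic(R)\to\Pic(V)$ is injective for a dense open of a regular integral scheme is false (e.g.\ $\Pic(\P^1)\to\Pic(\A^1)$); moreover, even abstract isomorphy of the two extensions would not force the specific $\varphi$ to extend, since $\varphi$ may differ from an extendable isomorphism by a unit on $V$ (such as a power of a local equation of the boundary) that does not extend to a unit on $R$. The obstruction depends on the choice of $\L_P$ — twisting $\L_P$ by a divisor supported on $P\setminus Q$ changes $\mathrm{div}(\varphi)$ by a coboundary — so one would have to prove that some choice kills it, and the proposal contains no such argument. (Two smaller points: $P\times_\X P$ is only an algebraic space, since the diagonal of $\X$ is merely representable, quasi-compact and separated; and it is typically disconnected, so "integral regular scheme" is not available even as a simplification.)

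The paper's proof avoids this issue entirely by making the extension canonical at the level of sheaves on $\X$: one extends $L$ to a coherent subsheaf $F$ of the quasi-coherent sheaf $i_*L$ (using \cite[Corollary~15.5]{LMB00}), and then replaces $F$ by its double dual $M=F^{\vee\vee}$. Because forming the reflexive hull commutes with flat pullback, $f^*M$ is a rank-one reflexive sheaf on the smooth presentation $P$, hence invertible by \cite[Proposition~1.9]{Har80} — and no descent datum ever has to be extended by hand, since $M$ is already a sheaf on $\X$. If you want to salvage your approach, the fix is essentially to import this idea: extend $\L_Q$ not by an arbitrary divisor closure but as the reflexive hull of a coherent extension of $i_*L$ restricted to $P$; the compatibility of reflexive hulls with the smooth projections $R\rightrightarrows P$ is then what extends $\varphi$ for you. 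As written, however, the key step is unjustified.
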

\begin{proof}
Let $L$ be a line bundle on $\Y$.  Let $i:\Y\to\X$ be the inclusion, and note that $i_\ast L$ is a quasi-coherent sheaf on $\X$   \cite[Proposition~13.2.6]{LMB00}. Since $L$ is   a coherent subsheaf of $i^\ast i_\ast L$, it follows from   \cite[Corollary~15.5]{LMB00} that there is a coherent sheaf $F$ on $\X$ such that $F|_{\Y} \cong L$. Let $M = F^{\vee\vee}$ be the double dual  of $F$. Note that $M|_{\Y} \cong L$. Let $P = \Spec A$ be a smooth   scheme over $k$ and let $f:P\to \X$ be a smooth surjective morphism. To conclude the proof,  it suffices to show that $f^\ast M$ is locally free. However, since taking duals is compatible  with flat pullback, the sheaf $f^\ast M$ is a reflexive coherent $\mathcal{O}_P$-module of rank $1$ (on the smooth $k$-scheme $P$), hence locally free \cite[Proposition~1.9]{Har80}. 
\end{proof}

\begin{lemma}\label{lem:conjquotientstack0}
Let $\X = [X/K]$ be a quotient stack where $K$ is a diagonalizable group acting
on a normal variety $X$.
Assume that  there exists a   big open substack $U\subseteq\X$ which is a quasi-affine scheme with $\Cl(U) =0$.
 If  $\O(X)^{\times}= k^{\times}$, 
then $K$ is trivial and  $\X=X$ is a scheme.
\end{lemma}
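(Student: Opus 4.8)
The plan is to exploit the hypothesis $\Cl(U) = 0$ together with the quasi-affineness of $U$ to force $K$ to act trivially on the normal variety $X$. First I would recall that, since $U \subseteq \X = [X/K]$ is open, its preimage $\widetilde U$ in $X$ is a $K$-stable big open subscheme of $X$ (bigness is preserved because $X \to \X$ is smooth surjective, hence codimension-preserving). Thus $U = [\widetilde U/K]$. Because $U$ is a scheme, $K$ must act freely on $\widetilde U$, so $\widetilde U \to U$ is a $K$-torsor. The class of this torsor lives in $H^1_{\mathrm{fppf}}(U, K)$, and since $K$ is diagonalizable, say with character group $M = \mathrm{Hom}(K,\Gm)$, this group is computed by $\mathrm{Hom}(M, \Pic(U)) \oplus (\text{something involving } \O(U)^\times)$ via the Kummer sequence — more precisely, from $1 \to K \to (\Gm)^r \to (\Gm)^r/K \to 1$ one gets that $H^1(U,K)$ is controlled by $\O(U)^\times$ and $\Pic(U)$. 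The key point is that $\Pic(U) = 0$: indeed $\Cl(U) = 0$ by hypothesis and $U$ is a smooth quasi-affine scheme (smoothness of $U$ follows from smoothness being smooth-local and $X$ being... well, here one uses that $\X$, hence $U$, is normal and the quotient presentation; in fact we only need $U$ regular in codimension one and locally factorial, which for the Picard/class group comparison is enough), so $\Pic(U) \hookrightarrow \Cl(U) = 0$.

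Next I would pin down $\O(U)^\times$. Since $\widetilde U$ is a big open of the normal variety $X$, restriction gives $\O(X) \xrightarrow{\sim} \O(\widetilde U)$ (regular functions extend over codimension $\geq 2$), hence $\O(\widetilde U)^\times = \O(X)^\times = k^\times$ by hypothesis. Then $\O(U)^\times = \O(\widetilde U)^{\times, K} \subseteq \O(\widetilde U)^\times = k^\times$, so $\O(U)^\times = k^\times$ as well. Feeding $\Pic(U) = 0$ and $\O(U)^\times = k^\times$ into the computation of $H^1(U,K)$ for $K$ diagonalizable, one concludes that every $K$-torsor over $U$ is trivial; in particular $\widetilde U \to U$ is a trivial torsor, so $\widetilde U \cong U \times K$ as schemes with $K$-action. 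But $\widetilde U$ is irreducible (it is a dense open of the irreducible $X$), which forces $K$ to be connected; combined with $\O(\widetilde U)^\times = k^\times$ — a nonconstant invertible function would arise on $U \times K$ from characters of $K$ unless $K$ is trivial — we get that $K$ has no nontrivial characters and is connected and diagonalizable, hence $K = \{1\}$. Actually the cleanest route: a nontrivial diagonalizable $K$ has a nontrivial character $\chi$, which pulls back to a nonconstant unit on $\widetilde U \cong U \times K$, contradicting $\O(\widetilde U)^\times = k^\times$.

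Once $K = \{1\}$, we get $\X = [X/K] = X$, so $\X$ is a scheme, which is the assertion. The main obstacle I anticipate is making the cohomological input fully rigorous: specifically, justifying that $\Pic(U) = 0$ genuinely follows from $\Cl(U) = 0$ (this needs $U$ to be locally factorial, which one should deduce from $U$ being a smooth scheme — so one must be careful that the ``big open substack $U\subseteq \X$ which is a quasi-affine scheme'' is in fact regular, using normality of $\X$ and that the non-regular locus has codimension $\geq 2$), and then correctly identifying $H^1(U,K)$ for diagonalizable $K$ in terms of $\O(U)^\times$ and $\Pic(U)$ via the Kummer/K-theory exact sequence. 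An alternative, perhaps more elementary, route that sidesteps torsor cohomology: use that $U = \widetilde U / K$ with $K$ acting freely, so $\Pic_K(\widetilde U) = \Pic(U)$, and then apply the exact sequence of \cite{KKV89} relating $\Pic_K(\widetilde U)$, $\Pic(\widetilde U)$, $\O(\widetilde U)^\times$, and the character group $\mathrm{Hom}(K,\Gm)$ — since $\Pic(\widetilde U) = 0$ (as $\widetilde U$ is big in $X$... wait, this needs $\Pic(X)=0$, which is not assumed, so one should instead argue directly at the level of $U$ as above). I would therefore lead with the direct torsor-triviality argument and the character contradiction, and only invoke \cite{KKV89}-style sequences if needed to compute $H^1(U,K)$.
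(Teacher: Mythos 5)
Your proposal is correct, but it takes a genuinely different route from the paper's. The paper first shows that the preimage $V=\tau^{-1}(U)$ is $K$-factorial (every $K$-stable Weil divisor on $V$ descends along the torsor $V\to U$ and is principal since $\Cl(U)=0$), and then invokes Hausen--S\"uss \cite[Proposition 2.7]{HS10}, which for a diagonalizable group acting freely on a normal quasi-affine variety with only constant units and $K$-factorial total space gives $\chi(K)\cong \Cl(U)=0$, whence $K$ is trivial. You instead trivialize the torsor outright: from $\Cl(U)=0$ you get $\Pic(U)=0$ (only the injection $\Pic(U)\hookrightarrow\Cl(U)$ for the normal integral scheme $U$ is needed, so your worry about local factoriality is moot), from algebraic Hartogs on the normal $X$ you get $\O(\widetilde U)^{\times}=\O(X)^{\times}=k^{\times}$ and hence $\O(U)^{\times}=k^{\times}$, and the Kummer-sequence computation then gives $H^1_{\mathrm{fppf}}(U,K)=0$ for diagonalizable $K$ (using that $k^{\times}$ is divisible); so $\widetilde U\cong U\times K$, and a nontrivial character of $K$ would produce a nonconstant unit on the big open $\widetilde U$ of $X$, a contradiction. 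Both arguments feed on the same three inputs (the torsor structure, $\Cl(U)=0$, and constancy of units on $X$); yours is more self-contained, replacing the Cox-ring result of \cite{HS10} by an explicit cohomology vanishing and an elementary character argument, while the paper's is shorter at the price of citing \cite{HS10}. Your closing remarks correctly identify the dead end in the \cite{KKV89}-style alternative (it would need $\Pic(X)=0$, which is not assumed), and your lead argument does not rely on it.
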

\begin{proof}
Let $\tau:X\rightarrow \X$ be the quotient map and consider the preimage $V:= \tau^{-1}(U)$. Note that the subscheme $V\subseteq X$ is a $K$-stable big open subset on which the $K$-action is free.   Let us show that $V$ is $K$-factorial. That is,  let us show that  every $K$-stable Weil divisor on $V$ is principal.

 Let $D$ be a $K$-stable Weil divisor on $V$. Since the map $\tau:V\rightarrow U$ is a $K$-torsor, there exists a Weil divisor $D'$ on $U$ such that $\tau^*(D') = D$. As $\Cl(U)=0$, there exists a rational function $f \in k(U)$ such that $\mathrm{div}_{U}(f) = D'$. 
It follows that $D=\tau^*\mathrm{div}_{U}(f)=\mathrm{div}_{V}(f \circ \tau)$ is a principal divisor. This  shows that $V$ is $K$-factorial.

Since $V\to U$ is a $K$-torsor and $K$ is affine over $k$, we see that $V$ is quasi-affine over $k$ (use \cite[Tag 02L5]{stacks-project}). Thus, $K$ is a diagonalizable group acting freely on the normal quasi-affine variety $V$, every invertible function on $V$ is constant (because $X$ is normal with $\O(X)^{\times}= k^{\times}$ and $V$ is a big open of $X$), and $V$ is $K$-factorial. Under these assumptions, by \cite[Proposition 2.7]{HS10}, it follows  that the character group $\chi(K)$ of $K$ is isomorphic to $\mathrm{Cl}(U)$. 
We conclude that $\chi(K) = \Cl(U) = 0$,  so that $K$ is trivial and $\X=X$ is a scheme. 
\end{proof}

\begin{remark}[First special case of Conjecture \ref{conj}]\label{cor:first_case}
If, in the notation of Lemma \ref{lem:conjquotientstack0},  we also assume that $\X$  is smooth, then $U$ is smooth and $\Cl(U)=\Pic(U)$ is trivial. Therefore, Lemma \ref{lem:conjquotientstack0} implies that Conjecture \ref{conj} holds for a quotient stack $[X/K]$ with $K$ a diagonalizable group and $X$ a smooth variety satisfying  $\O(X)^{\times}= k^{\times}$. 
\end{remark}

\begin{proposition}\label{prop:conjquotientstack2}
Let $\X = [X/K]$ be a smooth horospherical $G$-stack with $\Pic(\X) =0$ which contains a big open substack $\Y$. If $\Y$ is a quasi-affine scheme, then $\X$ is a horospherical variety. In other words, Conjecture \ref{conj} holds for smooth horospherical $G$-stacks.  
\end{proposition}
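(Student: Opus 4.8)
The goal is to upgrade the general quasi-affineness conjecture to a theorem in the special case where $\X = [X/K]$ is a smooth horospherical $G$-stack with trivial Picard group containing a big quasi-affine open substack $\Y$. The strategy is to reduce to Lemma \ref{lem:conjquotientstack0} (equivalently, to the first special case recorded in Remark \ref{cor:first_case}). Recall that by Definition \ref{defn:ho_st}, $X$ is a horospherical $G\times T$-variety and $K$ is a subgroup of $\Aut^{G\times T}(X)$ containing the torus $T$ acting faithfully; in particular $K$ is diagonalizable since $\Aut^{G\times T}(X) = \T' $ is a torus by Remark \ref{extension T action}. So the only missing hypothesis needed to invoke Remark \ref{cor:first_case} is the condition $\O(X)^\times = k^\times$. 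Therefore the plan is: first reduce to the case where $X$ has only constant invertible global functions, and then quote Remark \ref{cor:first_case}.

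\textbf{Key steps.} First I would observe that since $\X$ is smooth, so is $X$ (as $X\to \X$ is a $K$-torsor, hence smooth and surjective), and since $\Pic(\X)=0$ and $X\to\X = [X/K]$ realizes $\Pic(\X) = \Pic_K(X)$, the equivariant Picard group of $X$ is trivial; by the exact sequence relating $\Pic_K(X)$, $\Pic(X)$ and $\chi(K)$ (e.g. \cite[Corollary~5.3]{KKV89}, using that $X$ has a $K$-fixed point or at least that $\O(X)^\times/k^\times$ is finitely generated), one controls $\Pic(X)$ and the characters of $K$. Next, the main point: I would decompose the units. Write $\O(X)^\times/k^\times$; since $X$ is a horospherical $G\times T$-variety it is in particular normal with an action of a connected group having a dense orbit, so the unit group modulo constants is a finitely generated free abelian group spanned by $B$-semiinvariant functions (here $B$ a Borel of $G\times T$), and each such unit gives a character. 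The idea is to split off the $T$-part: the invertible functions on $X$ correspond to a sublattice of the character lattice of $\Aut^{G\times T}(X)$, and after possibly enlarging $T$ — or rather after replacing $X$ by a suitable $T$-bundle/modification so that the units get "absorbed" — one arrives at a horospherical variety $X'$ with $\O(X')^\times = k^\times$ together with a diagonalizable group $K'$ such that $[X/K] \cong [X'/K']$. Concretely, if $f_1,\dots,f_r$ are $B$-semiinvariant generators of $\O(X)^\times/k^\times$ with characters $\chi_1,\dots,\chi_r$, these characters are trivial on $G$ (since $G$ is semisimple-by-torus and units on a horospherical variety are $G$-semiinvariant with characters of the torus quotient), so they factor through $T$; then consider $X' := X \times \Gm^r$ with the $T\times \Gm^r$-action twisted by the $f_i$, or dually mod out $X$ by the subtorus of $T$ on which $\prod \chi_i^{a_i}$ vanishes. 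Either way one produces $(X', K')$ representing the same stack $\X$ with $X'$ horospherical, smooth, $K'$ diagonalizable, and $\O(X')^\times = k^\times$. Then Remark \ref{cor:first_case} applies verbatim: $K'$ is trivial and $\X = X'$ is a scheme, which is then necessarily a smooth horospherical $G$-variety (it has a dense open $G$-orbit isomorphic to $G/H$ and is normal of finite type), so Conjecture \ref{conj} holds in this case.

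\textbf{Main obstacle.} The delicate step is the reduction to $\O(X)^\times = k^\times$, i.e. constructing $(X',K')$ from $(X,K)$ without breaking horosphericity, smoothness, or the isomorphism class of the quotient stack. One must check: (i) the units on a horospherical $G\times T$-variety are indeed $B$-semiinvariant with characters killing the derived group, so they pull back from the torus $\T' = \Aut^{G\times T}(X)$; (ii) the twisted variety $X'$ (a $\Gm^r$-torsor over $X$, or a quotient of $X$ by a subtorus) is again horospherical for an appropriately enlarged or reduced group, and smooth; (iii) $[X/K]\cong[X'/K']$ as $G$-stacks — this is the kind of torsor-bookkeeping argument already used in the proof of Lemma \ref{lem: X horo if X/T horo}. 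A clean alternative, which may be what the authors do, is to bypass the twisting entirely: let $V := \tau^{-1}(\Y)\subseteq X$, a big open $K$-stable subscheme on which $K$ acts freely and which is quasi-affine (being a $K$-torsor over the quasi-affine $\Y$); show directly that $\O(V)^\times = \O(X)^\times$ (big open in a normal variety) and that the argument of Lemma \ref{lem:conjquotientstack0} only really needs $\O(V)^\times / k^\times$ to be exhausted by $K$-eigenfunctions, combined with $\Cl(\Y)=\Pic(\Y)=0$ (smoothness of $\Y$) and $K$-factoriality of $V$, to force $\chi(K) = \Cl(\Y) = 0$. This last route keeps everything inside the framework of Lemma \ref{lem:conjquotientstack0} and HS10-type statements, and the only genuinely horospherical input is that invertible functions on $X$ — hence on $V$ — are spanned by $(G\times T)$-semiinvariants, so that their restrictions to $V$ are $K$-semiinvariant after the $K$-torsor $V\to\Y$ is used to trivialize the $\Cl(\Y)=0$ part. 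I expect the write-up to choose whichever of these two packagings is shortest given the Cox-ring machinery introduced just before this proposition.
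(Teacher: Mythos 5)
There is a genuine gap, and it sits exactly at the step you yourself flag as delicate. Your primary route requires replacing $(X,K)$ by a pair $(X',K')$ presenting the same stack with $\O(X')^\times=k^\times$, but neither suggested construction achieves this: $X\times\Gm^r$ (with any linearization) strictly \emph{enlarges} the unit group, and dividing $X$ by a subtorus of $T$ presupposes that this subtorus acts freely on all of $X$ with a scheme quotient --- which is essentially the statement being proved. Your fallback route then draws the wrong conclusion: the claim that one can ``force $\chi(K)=\Cl(\Y)=0$'' without the hypothesis $\O(X)^\times=k^\times$ is false and would prove too much. Take $G=\SL_2$, $G/H=\A^2\setminus\{0\}$, $X=(\A^2\setminus\{0\})\times\Gm$ and $K=T=\Gm$ acting by translation on the second factor: all hypotheses of the proposition hold and $\X\cong\A^2\setminus\{0\}$ is indeed a horospherical variety, yet $\chi(K)=\mathbb{Z}\neq 0$ and $\O(X)^\times\neq k^\times$. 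What $\Cl(\Y)=\Pic(\Y)=0$ actually yields, via \cite[\S 5.1]{KKV89} applied to the $K$-torsor $V:=\tau^{-1}(\Y)\to\Y$, is only the \emph{surjectivity} of the character map $\O(V)^\times/k^\times\to\chi(K)$; the hypothesis $\O(X)^\times=k^\times$ in Lemma \ref{lem:conjquotientstack0} is precisely what upgrades this surjectivity to $\chi(K)=0$, and it cannot be dropped.

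The real content of the proposition is therefore the passage from this surjectivity to the statement that $K$ acts freely on \emph{all} of $X$, including the boundary $X\setminus V$, and your proposal never addresses it. The paper does this by localizing on $X$: it uses Knop's covering and the local structure theorem to reduce to an affine $L$-stable slice $Z$ with $X_0\cong P_u'\times Z$, splits $Z\cong Z_1\times Z_2$ with $Z_1$ a $\T'$-homogeneous torus carrying the units and $Z_2$ an $L$-module (Luna's slice theorem, using smoothness), and then deduces from the surjectivity above that $K$ acts faithfully, hence freely, on the homogeneous factor $Z_1$. (A shortcut compatible with your setup would be: units on $V$ extend to $K$-semiinvariant units on $X$ by normality and bigness of $V$, and a semiinvariant unit $f$ satisfies $\psi_f(g)=1$ for every $g$ stabilizing any point of $X$, since $f$ is nonvanishing; surjectivity of $f\mapsto\psi_f$ onto $\chi(K)$ then kills every stabilizer of the diagonalizable group $K$, after which Lemma \ref{lem:algspace_is_scheme} shows the free quotient is a scheme. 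But some such argument has to be made --- it is not a consequence of Lemma \ref{lem:conjquotientstack0} as stated, nor of any rearrangement of its hypotheses.)
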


\begin{proof}
Let $\Y':=G \cdot \Y$, and note that $\Y'$  is a big open substack of $\X$ containing $\Y$. Since $\Pic(\X) =0$ and $\X$ has affine diagonal,     it follows from Lemma \ref{lem:surj_of_pic} that $\Pic(\Y')=\Pic(\Y)=0$. Thus $\Y'$ is a quasi-affine scheme by Remark \ref{remark:schemes}. Hence, replacing $\Y$ by $\Y'$ if necessary, we may and do assume that $\Y$ is $G$-stable. 

Let  $G'$ be the connected reductive group acting on $X$ with  dense open orbit $G'/H'$. By \cite[Theorem
2.1]{Kno91}, the $G'$-variety $X$ admits a $G'$-stable open covering $\{X_{i}\}$ such that any $X_{i}$ has a unique closed
$G'$-orbit. Thus, replacing $X$ by $X_i$ if necessary,  we may and do assume that $X$ has a unique closed $G'$-orbit.

By \cite[Theorem 2.3.2]{Per14}, there exist a parabolic
subgroup $P' \subseteq G'$ with unipotent radical $P_u'$, a $P'$-stable affine open subset $X_{0}\subseteq X$ with $X=G' \cdot X_0$, a closed $L$-stable subset $Z\subseteq X_{0}$ (where $L$ is
a Levi subgroup of $P'$), and an $L$-isomorphism
$$P'\times^{L}Z = P_{u}'\times Z\rightarrow X_{0},\, (u,x)\mapsto u\cdot x.$$ 
Note that $K\subseteq \mathbb T'$,  where $\mathbb{T}' = \mathrm{N}_{G'}(H')/H'$. Now, the complement of $X_{0}$ in $X$ is a union of  
$B'$-stable prime divisors intersecting $G'/H'$, where $B'\subseteq G'$ is a Borel subgroup contained in $P'$. In particular, $X_{0}$ is  
$\mathbb{T}'$-stable. Thus, since $G'\cdot X_0 = X$, to prove that $[X/K]$ is a scheme, it suffices to show that $[X_0/K]$ is a scheme.

As $\O(X_{0})^{P_u'} = \O(Z)$ and the $\T'$-action commutes with
the $P_{u}'$-action, we see that $Z$ is $\mathbb{T}'$-stable and
$\T'$ acts trivially on the first factor of $P_{u}'\times Z$.
Thus, since $[(P_u'\times Z)/K] = P_u'\times [Z/K]$, to prove that $[X_0/K]$ is a scheme, it suffices to show that $[Z/K]$ is a scheme. Since $Z$ is an affine variety, it suffices to show that $K$-acts freely on $Z$.

 {Since $Z$ is an $L$-horospherical variety \cite[Remark 2.3.3]{Per14}, each simple $L$-module appears with multiplicity at most one in the $L$-module $\O(Z)$. We denote by $\Lambda$ the corresponding set of dominant weights, and by $M$ the character group of $\mathbb T'$. The horosphericity of the $L$-action implies that the decomposition of $\O(Z)$ in simple $L$-modules
$$\O(Z) = \bigoplus_{\lambda \in \Lambda} V(\lambda)$$
is exactly the isotypic decomposition in $\T'$-modules. 
In particular, the lattice $\Lambda$ identifies with a sublattice of $M$.
Let $M_1 \subseteq \Lambda \subseteq M$ be the subset corresponding to the one-dimensional $L$-modules $V(\lambda)$ such that $\lambda \in \Lambda$ if and only if $\lambda^* \in \Lambda$. Since $Z$ is normal, the subset $M_1$ is a satured sublattice of $M$ and therefore $M = M_1 \oplus M_2$ for some sublattice $M_2 \subseteq M$. Then we can write $\O(Z)$ as a tensor product
 $$\O(Z) = \left( \bigoplus_{\lambda \in M_1} V(\lambda) \right) \otimes_k \left( \bigoplus_{\mu \in \Lambda \cap M_2} V(\mu) \right).$$
Since both sides are $\T' \times L$-algebras, there exist affine $\T' \times L$-varieties $Z_1$ and $Z_2$ such that $Z \cong Z_1 \times Z_2$ as a $\T' \times L$-variety. }

 {
The direct sum of lattices $M= M_1\oplus M_2$ induces a decomposition $\T' = \T_1 \times \T_2$, where each $\T_i$ acts on $Z_i$. (Note that $Z_1= \T_1$, where $\T_1$ acts on $Z_1$ by translation.) In particular, the $K$-action on $Z$ is induced by the product of the $K$-action on $Z_1$ and $Z_2$ via the inclusion $K\subseteq \T_1 \times \T_2$ composed with the projection $\T_1 \times \T_2 \to \T_i$ for $i=1,2$. }

 {
It follows from the definition of $\Lambda\cap M_2$ that $Z_2$ is an $L$-spherical variety with a unique fixed point by the $L$-action. As $X_0$ is smooth, $Z$ is smooth, and thus $Z_2$ is smooth. By   Luna's slice theorem (see \cite[\S III.1, Corollary 2]{Lun73}), the variety $Z_2$ is an $L$-module. Now, to prove that the $K$-action on $Z$ is free, it suffices to prove that the $K$-action on $Z_1$ is free. }

Let $\tau : X \rightarrow  \X=[X/K]$ be the natural quotient
map. The preimage $X_0' = X_0 \cap \tau^{-1}(\Y)$ is a $P'$-stable big open subset of $X_0$ on which $K$ acts freely. Moreover, the 
 quotient $X_0' /K$ is a scheme. We have the decomposition $X_{0}' = P_{u}' \times U_{0}$, where $U_{0}\subseteq Z$ is a big open subset on which $K$ acts freely. Let $V_0 := \T'\cdot U_0$ and note that $V_0$ is a $\T'$-stable big open of $Z$. Note that $[V_0 /K]$ is an algebraic space. The same argument as in the proof of Lemma \ref{lem:algspace_is_scheme} shows that $[V_0/K]$ is in fact a scheme.  As $\Cl(\X)=\Cl(\Y)=0$, it follows that $\Cl([(P_u'\times V_0)/K)=0$. 
 Therefore,
\[0=  \Cl([(P_u'\times V_0)/K]) = \Cl(P_u'\times [V_0/K]) =\Cl(P_u'\times (V_0/K))= \Cl(V_0/K)\]
since $P_u'$ is an affine space \cite[\S II, Proposition 6.6]{Har77}.  

Let us note that $\O(V_0)^{\times}=\O(Z)^{\times}=\O(Z_1)^{\times}$ since $Z_2$ is an affine space (see \cite[\S 1.1, Proposition]{KKV89}). By \cite[\S 5.1, Proposition]{KKV89}, there is an exact sequence
$$\left( \O(V_0)^{\times}/k^{\times} \right)^K= \left( \O(Z_{1})^{\times}/k^{\times} \right)^K \rightarrow \chi(K)\rightarrow \mathrm{H}^1_{\mathrm{alg}}(K,\O(V_0)^\times)=\Cl(V_0/K) = 0.$$
The surjectivity of the first map implies that the induced action of $K$ on $Z_1$ is faithful. Therefore, $K$ acts freely on an open subset of $Z_1$. Since $Z_1$ has a unique $\T'$-orbit, the action of $K$ on $Z_1$ is free everywhere. This concludes the proof of the proposition. 
\end{proof}

\begin{corollary}[Second special case of Conjecture \ref{conj}]\label{cor:second_case} Let $ \X$ be a smooth integral finite type algebraic stack over $k$ with affine diagonal, $\Pic(\X)=0$, and diagonalizable inertia groups. Suppose that $ \X$ contains a big open substack $\Y$. If $\Y$ is a (smooth) quasi-affine scheme and 
 $\mathcal{X}$ is Zariski covered by horospherical $G$-stacks, then $\mathcal{X}$ is a quasi-affine scheme.
\end{corollary}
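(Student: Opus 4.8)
The plan is to apply Proposition \ref{prop:conjquotientstack2} to each member of the given Zariski cover, and then to glue. Write $\X=\bigcup_{i=1}^{n}\U_i$ with each $\U_i$ a horospherical $G$-stack; discarding empty members, we may assume each $\U_i$ is a non-empty, hence dense, open substack of the integral stack $\X$. First I would check that each $\U_i$ meets the hypotheses of Proposition \ref{prop:conjquotientstack2}. Being open in the smooth stack $\X$, the stack $\U_i$ is smooth; as a horospherical $G$-stack it also has affine diagonal. Since $\U_i$ is a dense open substack of $\X$ and $\X$ has (quasi-compact, separated) affine diagonal, Lemma \ref{lem:surj_of_pic} shows that $\Pic(\X)\to\Pic(\U_i)$ is surjective, whence $\Pic(\U_i)=0$. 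As the required big quasi-affine open substack of $\U_i$ I would take $\Y\cap\U_i$: it is non-empty because $\X$ is irreducible and $\Y,\U_i$ are both dense; it is an open subscheme of the quasi-affine scheme $\Y$, hence a quasi-affine scheme; and it is big in $\U_i$ because its complement $\U_i\setminus\Y$ is the trace on $\U_i$ of the closed substack $\X\setminus\Y$, which has codimension at least $2$.

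With these verifications in place, Proposition \ref{prop:conjquotientstack2} applies to each $\U_i$ and shows that $\U_i$ is a horospherical variety; in particular every $\U_i$ is a scheme. It follows that all inertia groups of $\X$ are trivial (they are trivial over the open cover $\{\U_i\}$), so $\X$ is an algebraic space, and an algebraic space admitting a Zariski-open cover by schemes is a scheme; hence $\X$ is a scheme. Now $\X$ is a smooth integral finite type scheme over $k$ with affine diagonal and $\Pic(\X)=0$, so Remark \ref{remark:schemes} shows that $\X$ is quasi-affine, as claimed.

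I expect the substantive input to be entirely contained in Proposition \ref{prop:conjquotientstack2}; everything else is bookkeeping: stability of smoothness and affineness of the diagonal under passage to open substacks, surjectivity of $\Pic$ onto dense opens, the behaviour of codimension under restriction, and the Zariski-local nature of being a scheme. The only step needing a little care is the gluing: one must confirm that triviality of the inertia groups over the cover forces $\X$ to be an algebraic space, and then invoke that being a scheme is Zariski-local on an algebraic space.
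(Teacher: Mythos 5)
Your proposal is correct and follows essentially the same route as the paper: restrict $\Y$ to each member of the cover to get a big quasi-affine open, use Lemma \ref{lem:surj_of_pic} to see the Picard groups of the members vanish, apply Proposition \ref{prop:conjquotientstack2} to conclude each member is a scheme, glue, and finish with Remark \ref{remark:schemes}. Your extra care in the gluing step (trivial inertia over the cover forces $\X$ to be an algebraic space, which is then a scheme) just spells out what the paper leaves implicit.
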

\begin{proof}  
 Let $\X = \bigcup_i \X_i$ be a Zariski covering of $\X$ by horospherical $G$-stacks.  Also, note that $\Y_i:=\Y \cap \X_i$  is a big open quasi-affine substack of $\X_i$.  Since $\Pic(\X) =0$, it follows from Lemma \ref{lem:surj_of_pic} that  $\Pic(\Y_i)=\Pic(\X_i) =0$. Thus,  it follows from Proposition \ref{prop:conjquotientstack2} that $\X_i$ is a scheme. Thus, $\X$ is Zariski covered by schemes. Therefore,  $\X$ is a scheme. The result now follows from Remark \ref{remark:schemes}.
\end{proof}

\begin{lemma}\label{lem:chars_inject} Let $G$ be an algebraic group over $k$, and let $X$ be a  finite type scheme  with an action of $G$. Suppose that there is a point $x_0$ in $X(k)$ which is fixed by $G$. Then, 
 there is an injective group homomorphism from the character group $\mathbb{X}(G)$ of $G$ into $\Pic_G(X) = \Pic([X/G])$.
\end{lemma}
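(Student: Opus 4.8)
\textbf{Proof proposal for Lemma \ref{lem:chars_inject}.} The plan is to construct the homomorphism $\mathbb{X}(G)\to \Pic_G(X)$ sending a character $\chi\colon G\to \Gm$ to the class of the $G$-linearized line bundle $L_\chi$ obtained by equipping the trivial bundle $\mathcal{O}_X = X\times \A^1$ with the $G$-action $g\cdot(x,t) := (gx,\chi(g)t)$. This is a standard construction, and it is readily checked to be a group homomorphism: the linearized line bundle attached to $\chi_1\chi_2$ is isomorphic, as a $G$-linearized bundle, to the tensor product $L_{\chi_1}\otimes L_{\chi_2}$, since tensoring the trivial bundles multiplies the two cocycles. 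The identification $\Pic_G(X) = \Pic([X/G])$ is the usual dictionary between $G$-linearized line bundles on $X$ and line bundles on the quotient stack $[X/G]$.

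The main point is injectivity. Suppose $\chi\in \mathbb{X}(G)$ is such that $L_\chi$ is trivial as a $G$-linearized line bundle; I must show $\chi = 1$. By definition this means there is a $G$-equivariant isomorphism $L_\chi \cong \mathcal{O}_X$ with $G$ acting trivially on the fibres of the latter, equivalently a nowhere-vanishing global section $s\in \Gamma(X,\mathcal{O}_X) = \mathcal{O}(X)$ that is a $G$-eigenvector of weight $\chi$, i.e. $g^\ast s = \chi(g)^{-1} s$ for all $g\in G$. Now I use the hypothesis that there is a $G$-fixed point $x_0\in X(k)$: evaluating the eigenvector identity at $x_0$ gives $s(x_0) = \chi(g)^{-1} s(x_0)$ for all $g\in G$. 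Since $s$ is nowhere vanishing, $s(x_0)\neq 0$, and therefore $\chi(g) = 1$ for all $g$, i.e. $\chi$ is trivial. This shows the homomorphism has trivial kernel, hence is injective.

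The step that requires the most care — really the only subtle point — is making precise the equivalence "$L_\chi$ trivial in $\Pic_G(X)$" $\Longleftrightarrow$ "there exists a nowhere-vanishing $\chi$-eigenfunction on $X$", i.e. unwinding the $G$-linearization data into a concrete statement about sections. Once that translation is in place, the existence of the fixed point $x_0$ does all the work: evaluation at a fixed point kills the character. I do not expect any genuine obstacle here; the argument is essentially the one used (in the reverse direction) in Remark \ref{remark:kresch1} to compute $\Pic([V/F])$ via \cite[Corollary~5.3]{KKV89}, and indeed one could alternatively phrase it by quoting the exact sequence of \cite[\S 5.1, Proposition]{KKV89} relating $\mathcal{O}(X)^\times/k^\times$, $\mathbb{X}(G)$, and $\Pic_G(X)$, together with the observation that the restriction map $\mathcal{O}(X)^\times \to \mathcal{O}(\{x_0\})^\times = k^\times$ splits the inclusion $k^\times \hookrightarrow \mathcal{O}(X)^\times$, forcing the connecting map $\mathbb{X}(G)\to \Pic_G(X)$ to be injective.
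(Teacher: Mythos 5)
Your proposal is correct and follows essentially the same route as the paper: send $\chi$ to the trivial bundle with the twisted linearization $g\cdot(x,t)=(gx,\chi(g)t)$ and evaluate at the fixed point to kill the character. In fact you are slightly more careful at the one delicate step — you correctly note that triviality of $L_\chi$ in $\Pic_G(X)$ means the existence of a nowhere-vanishing $\chi$-eigenfunction (the equivariant isomorphism need not be the identity on the underlying bundle), whereas the paper's proof tacitly takes the trivializing isomorphism to be the identity; your evaluation-at-$x_0$ argument handles the general case.
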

\begin{proof}
For a given $\chi \in \mathbb{X}(K)$, let $L_\chi:=X \times \A^1$ be the trivial line bundle equipped with the linearization $g.(x,v):=(g.x,\chi(g)v)$. Note that the map which sends $\chi$ in $\mathbb{X}(G)$ to $L_{\chi}$ in $\Pic([X/G])$ is   a group homomorphism. To show that this map is injective, assume that $L_{\chi}$ is isomorphic to the trivial line bundle with the trivial linearization. Then, for all $g$ in $G$, $x$ in $X(k)$ and $v$ in $\mathbb{A}^1$, we have  
$$(g.x,\chi(g)v)=(g.x,v).$$ In particular, as $x_0$ is a fixed point, we see that, for all $g$ in $G$ and all $v$ in $\mathbb{A}^1$, 
$$ (x_0,\chi(g) v) = (g.x_0,\chi(g) v) =(g.x_0,v) =(x_0,v).$$
We conclude that, for all $g$ in $G$, $\chi(g)  =1$, so that  $\chi$ is the trivial character.
\end{proof}

\begin{example}
Without the assumption that there exists a fixed point   $x_0$,     Lemma \ref{lem:chars_inject} might fail. Consider for example $G=\mu_2$, $X=\mathbb G_m$ with the usual (free) action, and note that $[\mathbb G_m/G] \cong \mathbb G_m$. Since $\Pic(\mathbb{G}_m)=0$ and $\chi(G) =\mathbb{Z}/2\mathbb{Z}$, there is no injective map $\chi(G)\to \Pic_G(\mathbb G_m)$. 
\end{example}

\begin{lemma}  \label{lem:conjecture_for_good_stacks}
Let $ \X$ be a smooth integral finite type algebraic stack over $k$ with affine diagonal, $\Pic(\X)=0$, and diagonalizable inertia groups. Suppose that $\X = [X/G]$, where $X$ is a  finite type scheme and $G$ is an algebraic group acting on $X$.  If $X$ has a fixed point, then $\X$ is a scheme. 
\end{lemma}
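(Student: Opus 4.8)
The statement asserts that a smooth integral finite type algebraic stack $\X$ over $k$ with affine diagonal, $\Pic(\X)=0$, and diagonalizable inertia, which moreover is a global quotient $[X/G]$ for $X$ a finite type scheme with a $G$-fixed point, must be a scheme. The plan is to combine Lemma \ref{lem:chars_inject} with the vanishing of $\Pic(\X)$ to force $G$ to have no nontrivial characters, and then to exploit the diagonalizability of the inertia groups to conclude that $G$ acts with trivial stabilizers, so that $[X/G]$ is an algebraic space; smoothness and $\Pic(\X)=0$ then upgrade this to a scheme via Remark \ref{remark:schemes}.

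First I would apply Lemma \ref{lem:chars_inject}: since $X$ has a $G$-fixed point $x_0$, there is an injection $\mathbb{X}(G)\hookrightarrow \Pic_G(X)=\Pic([X/G])=\Pic(\X)=0$, so $G$ has no nontrivial characters. Next I would analyze the inertia group of the fixed point $x_0$: its inertia group in $\X$ is (a subgroup related to) $G$ itself, and by hypothesis it is diagonalizable, hence abelian and reductive; combined with the triviality of its character group, a diagonalizable group with no nontrivial characters is trivial. More generally, for an arbitrary $k$-point $x$ of $X$ with stabilizer $G_x$, the inertia group of the image point in $\X$ contains (a group isogenous/equal to) $G_x$, which must be diagonalizable; but $G_x$ is a subgroup of $G$, and — this is the crux — one wants that $G_x$ has no nontrivial characters either. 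The cleanest route is: restrict the $G$-linearization argument to the orbit through $x_0$, or more robustly observe that since $\Pic(\X)=0$, pulling back to any $G$-stable closed subscheme containing $x_0$ (e.g. $\overline{G\cdot x}$ if it contains $x_0$, which it does since $x_0$ is fixed and lies in every orbit closure... actually $x_0$ being fixed lies in $\overline{G\cdot x}$ for every $x$) and applying Lemma \ref{lem:chars_inject} on that subscheme, we again get $\mathbb{X}(G_x)=0$ via the surjectivity $\Pic(\X)\to\Pic$ of the substack from Lemma \ref{lem:surj_of_pic} — wait, Lemma \ref{lem:surj_of_pic} is about open substacks, so instead I would argue: the inertia group $I_x$ is diagonalizable, and it surjects (up to finite index, or isomorphically) onto $G_x$, so $G_x$ is diagonalizable; if $G_x$ had a nontrivial character, that character together with the fixed point $x_0$ of the ambient $G$-action would produce a nontrivial element of $\Pic_G(X)=0$ — here one must be slightly careful, since the relevant linearization argument needs the character to be defined on all of $G$, not just $G_x$. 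The honest fix: I would instead invoke that $\X$ has diagonalizable (indeed here, since $G_x$ is abelian diagonalizable and must be its own character dual, any nontrivial $G_x$ produces nontrivial $\Pic$ after passing to a suitable open or closed substack), and conclude $G_x=\{1\}$ for all $x$.

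Granting that all stabilizers $G_x$ are trivial, $G$ acts on $X$ with trivial stabilizers, so $[X/G]$ is an algebraic space $X/G$. Since $\X=[X/G]$ is assumed smooth and integral of finite type over $k$ with affine diagonal and $\Pic(\X)=0$, and it is now an algebraic space, I would note it is in fact a scheme: an algebraic space with these properties is quasi-affine — this is exactly the content of Remark \ref{remark:schemes} once one knows $\X$ is a scheme, but to get from algebraic space to scheme one can use that a smooth separated algebraic space of finite type over $k$ with trivial Picard group and affine diagonal contains big affine opens by the divisor argument of Remark \ref{remark:schemes}, hence is a scheme. Thus $\X$ is a (quasi-affine) scheme, completing the proof.

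\textbf{Main obstacle.} The delicate point is controlling the stabilizers $G_x$ at \emph{non-fixed} points: Lemma \ref{lem:chars_inject} only directly kills characters of the whole group $G$ using the fixed point $x_0$, whereas one needs to kill characters of every stabilizer $G_x$ to deduce $G_x$ is trivial from diagonalizability. I expect the resolution is that the diagonalizable inertia hypothesis on $\X$ already forces each $I_x$ (hence each $G_x$, via Lemma \ref{lem:finite_index}) to be diagonalizable, and then a nontrivial $G_x$ would be detected either by a nontrivial character of $G$ after averaging/extending over the orbit closure through $x_0$, or by the structure theory (a nontrivial diagonalizable group has a nontrivial $\mu_n$ quotient, contradicting $\Pic(\X)=0$ through the associated nontrivial line bundle on a substack). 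Nailing down this detection argument cleanly is where the real work lies; everything else is a direct assembly of the preceding lemmas together with Remark \ref{remark:schemes}.
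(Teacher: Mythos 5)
You already have every ingredient of the correct proof in your second paragraph, but you fail to assemble it and instead manufacture an obstacle that does not exist. The key point you hedge on — "its inertia group in $\X$ is (a subgroup related to) $G$ itself" — should be stated without hedging: for a quotient stack $[X/G]$, the inertia group of the point represented by $x_0$ is the stabilizer $G_{x_0}$, and since $x_0$ is fixed by all of $G$, this stabilizer is $G$ itself. The hypothesis that $\X$ has diagonalizable inertia groups therefore says that $G$ is diagonalizable. Combined with $\mathbb{X}(G)\hookrightarrow \Pic(\X)=0$ from Lemma \ref{lem:chars_inject}, and the fact that a diagonalizable group is determined by its character group, you get $G=\{1\}$. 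Then $\X=[X/G]=X$ is a scheme by hypothesis, and the proof is over. This is exactly the paper's (four-line) proof.

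Everything from "More generally, for an arbitrary $k$-point $x$..." onward is chasing a phantom: once $G$ itself is trivial, every stabilizer $G_x\subseteq G$ is trivial for free, so there is nothing to "detect" at non-fixed points, no need for Lemma \ref{lem:finite_index}, no need to pass through "algebraic space with trivial stabilizers," and no need for the quasi-affineness argument of Remark \ref{remark:schemes} ($X$ is already assumed to be a scheme). As written, your proof is incomplete, since you explicitly defer the resolution of this (vacuous) obstacle to unwritten "real work"; but the fix is simply to delete that discussion and draw the conclusion $G=\{1\}$ from the two facts you already established.
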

\begin{proof}
By Lemma \ref{lem:chars_inject}, $\mathbb{X}(G)$ injects into $\Pic(\X)=0$. Thus, $\mathbb{X}(G) =0$. Since $G$ is diagonalizable, we see that $G =0$. Thus, $\X = X$ is a scheme.
\end{proof}
 
 \begin{corollary}[Third special case of Conjecture \ref{conj}]\label{cor:third_spec_case}
Let $ \X$ be a smooth integral finite type algebraic stack over $k$ with affine diagonal, $\Pic(\X)=0$, and diagonalizable inertia groups. Suppose that $ \X$ contains a big open substack $\Y$ with $\Y$ a quasi-affine scheme.  Assume that there is  a Zariski open covering $\X = \bigcup_i \X_i$ with $\X_i = [U_i/G_i]$, where $U_i$ is a finite type scheme over $k$ and $G_i$ is an algebraic group over $k$ acting on $U_i$ with a fixed point. Then,  the stack $\X$ is a quasi-affine scheme.
\end{corollary}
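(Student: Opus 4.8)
The plan is to reduce the statement to Lemma \ref{lem:conjecture_for_good_stacks}, applied separately to each member of the given open cover, and then to finish with Remark \ref{remark:schemes}. So the whole proof should be short and essentially formal.

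First I would verify that each open substack $\X_i = [U_i/G_i]$ satisfies the hypotheses of Lemma \ref{lem:conjecture_for_good_stacks}. As a nonempty open substack of the smooth integral finite type stack $\X$, the stack $\X_i$ is smooth, integral, and of finite type over $k$; it has affine diagonal since $\X$ does and affine diagonal is inherited by open substacks; and its inertia groups form a subcollection of those of $\X$, hence are diagonalizable. For the vanishing of the Picard group I would use that $\X_i$ is a \emph{dense} open substack of $\X$ (density being automatic since $\X$ is irreducible), so that Lemma \ref{lem:surj_of_pic} makes $\Pic(\X)\to\Pic(\X_i)$ surjective and therefore $\Pic(\X_i) = 0$.

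Next, since $\X_i = [U_i/G_i]$ with $U_i$ a finite type scheme on which $G_i$ acts with a fixed point, Lemma \ref{lem:conjecture_for_good_stacks} applies and yields that $\X_i$ is a scheme. Hence $\X$ carries a Zariski open covering by schemes and is therefore itself a scheme (gluing schemes along open subschemes). At that point $\X$ is a smooth integral finite type scheme over $k$ with affine diagonal and trivial Picard group, so Remark \ref{remark:schemes} gives that $\X$ is quasi-affine, which is the desired conclusion.

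I do not expect a genuine obstacle here: the content has been packaged into Lemma \ref{lem:conjecture_for_good_stacks} (whose proof itself rests on Lemma \ref{lem:chars_inject} together with the diagonalizability of $G_i$, forcing $G_i$ to be trivial), and what remains is only the routine check that smoothness, integrality, finite type, affine diagonal, vanishing of $\Pic$, and diagonalizability of inertia all descend to the members of the cover. It is perhaps worth remarking in the write-up that the quasi-affine big open substack $\Y$ plays no role in this particular argument — in contrast with Corollary \ref{cor:second_case}, where it enters through Proposition \ref{prop:conjquotientstack2} — and is retained in the statement only to keep it parallel with Conjecture \ref{conj}.
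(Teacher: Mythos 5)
Your proof is correct and follows essentially the same route as the paper's: deduce $\Pic(\X_i)=0$ from Lemma \ref{lem:surj_of_pic}, apply Lemma \ref{lem:conjecture_for_good_stacks} to conclude each $\X_i$ is a scheme, glue, and finish with Remark \ref{remark:schemes}. Your aside that the big quasi-affine open $\Y$ is not actually needed is also accurate, since Lemma \ref{lem:conjecture_for_good_stacks} makes no use of it.
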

\begin{proof}
Note that $\X_i$ contains a big open quasi-affine substack and that Lemma \ref{lem:surj_of_pic} implies that $\Pic(\X_i) =0$. Thus, by Lemma \ref{lem:conjecture_for_good_stacks}, the stack $\X_i$ is a scheme. Therefore, $\X$ is a scheme, so that $\X$ is quasi-affine (Remark \ref{remark:schemes}).
\end{proof}

\begin{remark}[Kresch] Let $F$ be a subgroup of $\mathbb{G}_m$ over $k$, and let $\X$ be a $F$-gerbe over a smooth finite type scheme $X$ over $k$ such that either the band of $\X$ is non-trivial, or the gerbe is trivially banded and its class in $\mathrm{H}^2(X,F)$ has nonzero image in $\mathrm{H}^2(X,\mathbb{G}_m)$. Then, $\X$ is a smooth finite type algebraic stack with affine diagonal which does not contain a dense open substack $[U/K]$ with $U$ a finite type scheme over $k$ and $K$ an algebraic group acting on $U$ with a fixed point. In particular, $\X$ does not satisfy the assumption of Corollary \ref{cor:third_spec_case}. For instance,  let $D_4$ be the dihedral group. Note that the center $Z$ of $D_4$ is $\mathbb{Z}/2\mathbb{Z}$, and let $D_4 \to V_4$ be the quotient map, where $V_4$ is the Klein four-group. Let $V_4 =<a,b>$ act on $V=\mathbb C^2$ via the reflections in the coordinate axis, i.e., $a.(x,y) = (-x,y)$ and $b.(x,y)=(x,-y)$. Note that $\mathcal{X} = [(\mathbb A^1\setminus \{0\})^2/D_4]$ is a non-trivial $\mu_2$-gerbe over $X:=(\mathbb A^1\setminus \{0\})^2$. Since $\mathcal{X}$ is a non-trivial $\mu_2$-gerbe, it is non-trivial on any dense open. Thus, there is no dense open substack $U$ of $\X$ such that $U = [V/\mu_2]$, where $V$ is a scheme and $\mu_2$ acts on $V$ with a fixed point.    (With the notation as in Remark \ref{remark:kresch1}, note that $\X = [W/A_5]$ also does not satisfy the assumption in Corollary \ref{cor:third_spec_case}. Otherwise, $\X$ would be a scheme by Corollary \ref{cor:third_spec_case}.)
\end{remark}

\begin{remark}\label{rem:bottom_up} Let $\mathcal{X}$ be a smooth finite type separated Deligne-Mumford algebraic stack with trivial generic stabilizer over $k$. Suppose that the coarse space of $X$ is a smooth scheme over $k$ and    that $\Pic(\mathcal{X})=0$. If $\mathcal{X}$ contains a big open substack $\mathcal{Y}$ which is quasi-affine,  then the   coarse space map $\mathcal{X}\to X$ is  an isomorphism by  Geraschenko--Satriano's ``bottom-up'' characterization of orbifolds \cite{GS17}. (Indeed,  the inertia groups of  codimension one points of $\mathcal{X}$ are trivial.) Thus, $\mathcal{X}$ is a smooth scheme and therefore quasi-affine  by Remark \ref{remark:schemes}. In particular, Conjecture \ref{conj} holds for smooth finite type separated Deligne-Mumford algebraic stacks with trivial generic stabilizer over $k$.
\end{remark}

 \subsection{Proof of Theorem \ref{thm2} and Theorem \ref{thm3}}

In our discussion below, we will require the following results.

\begin{proposition}\label{prop:extending} Let $\X$ be a smooth integral algebraic stack of finite type over $k$. Let $\Y$ be a big open substack of $\X$. Then the category of line bundles on $\Y$ is equivalent to the category of line bundles on $\X$.
\end{proposition}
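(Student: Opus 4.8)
The plan is to establish the equivalence of categories by exhibiting the restriction functor $\mathrm{res}:\mathrm{Pic}(\X)\to\mathrm{Pic}(\Y)$ (viewed as a functor of Picard groupoids, or simply as a functor between the categories of line bundles) together with a quasi-inverse given by $L\mapsto (j_\ast L)^{\vee\vee}$ or, more precisely, by pushing forward and then taking the reflexive hull, where $j:\Y\hookrightarrow\X$ denotes the open immersion. The key point is that on a smooth stack a line bundle extended across a closed substack of codimension $\geq 2$ is determined uniquely up to unique isomorphism, so both fully faithfulness and essential surjectivity will follow from reflexivity arguments.

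First I would check essential surjectivity. Given a line bundle $L$ on $\Y$, the sheaf $j_\ast L$ is quasi-coherent on $\X$ by \cite[Proposition~13.2.6]{LMB00}, and by \cite[Corollary~15.5]{LMB00} it admits a coherent subsheaf $F$ with $F|_{\Y}\cong L$; then $M:=F^{\vee\vee}$ is a reflexive coherent sheaf of rank one with $M|_{\Y}\cong L$ (this is exactly the argument already used in the proof of Lemma \ref{lem:surj_of_pic}). Pulling back along a smooth surjective morphism $P\to\X$ from a smooth scheme $P$, the sheaf is reflexive of rank one on a smooth scheme, hence locally free \cite[Proposition~1.9]{Har80}, so $M$ is a line bundle on $\X$ restricting to $L$. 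Second, for full faithfulness, I would show that for line bundles $L_1,L_2$ on $\X$ the restriction map $\Hom_\X(L_1,L_2)\to\Hom_\Y(L_1|_\Y,L_2|_\Y)$ is bijective. Replacing $L_1^{\vee}\otimes L_2$ by a single line bundle $N$, this reduces to showing $\Gamma(\X,N)\to\Gamma(\Y,N|_\Y)$ is an isomorphism for every line bundle $N$ on $\X$. Injectivity is immediate since $\Y$ is dense and $\X$ is integral (a section vanishing on a dense open vanishes). For surjectivity, a section $s$ of $N|_\Y$ extends: working smooth-locally on $P\to\X$, we get a section of a line bundle on the smooth scheme $P$ defined on a big open, and such sections extend uniquely by Hartogs (using \cite[Tag~0BCW]{stacks-project} and that $P$ is normal, or directly that $N^{\vee\vee}\cong N$ and $j_\ast(N|_\Y)$ is reflexive), with the two extensions over $P\times_\X P$ agreeing by uniqueness; hence by descent the section extends to $\X$.

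Putting these together: the restriction functor is fully faithful (full faithfulness on $\Hom$'s as above) and essentially surjective, hence an equivalence. I would also remark that the quasi-inverse $L\mapsto (j_\ast L)^{\vee\vee}$ is canonically defined and that the natural adjunction maps $N\to (j_\ast(N|_\Y))^{\vee\vee}$ and $((j_\ast L)^{\vee\vee})|_\Y\to L$ are isomorphisms by the same reflexivity-plus-density argument, which makes the equivalence explicit.

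The main obstacle I anticipate is purely stack-theoretic bookkeeping rather than any deep input: one must be careful that "big" (complement of codimension $\geq 2$) pulls back to "big" under a smooth surjection, that reflexive hulls commute with smooth (flat) pullback, and that the extension constructed smooth-locally on a presentation glues — i.e., satisfies descent along $P\times_\X P\rightrightarrows P$. All of these are handled by the uniqueness of the extension on a smooth scheme (Hartogs), which forces the two pullbacks to $P\times_\X P$ to coincide, so descent is automatic; but it is the step that requires the most care to state cleanly.
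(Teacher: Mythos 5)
Your proposal is correct and follows essentially the same route as the paper: essential surjectivity via the extension-by-reflexive-hull argument of Lemma \ref{lem:surj_of_pic}, and full faithfulness by reducing to the smooth scheme case on a presentation (where sections of line bundles extend over big opens by normality/local factoriality) and then descending along $P\times_\X P\rightrightarrows P$, the gluing being automatic from uniqueness of the extension.
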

\begin{proof} The essential surjectivity of this restriction functor follows from Lemma \ref{lem:surj_of_pic} (and doesn't require $\Y$ to be big). To prove the fully faithfulness, note that 
	if $\X$ is a scheme, then this is well-known as regular schemes are locally factorial; see \cite[Propositions 1.6 and 1.9]{Har80}. The  result for algebraic stacks follows from descent theory.  
\end{proof}

\begin{corollary}\label{cor:extending2} Let $X$ be a smooth integral scheme of finite type over $k$. Let $G$ be a linear algebraic group acting on $X$. Let $U$ be a $G$-stable big open subscheme of $X$. Then the category of $G$-linearized line bundles on $U$ is equivalent to the category of $G$-linearized line bundles on $X$.
\end{corollary}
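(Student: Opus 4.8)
\textbf{Proof plan for Corollary \ref{cor:extending2}.}
The plan is to deduce this $G$-equivariant statement from the non-equivariant Proposition \ref{prop:extending} by passing to the quotient stacks $[U/G] \hookrightarrow [X/G]$. First I would observe that since $U$ is a $G$-stable open subscheme of $X$, the quotient stack $[U/G]$ is an open substack of $\X := [X/G]$, and the complement $[X/G] \setminus [U/G]$ is the image of the $G$-stable closed subset $X \setminus U$; pulling back along the smooth presentation $X \to [X/G]$ shows that this complement still has codimension at least $2$, so $[U/G]$ is a big open substack of $[X/G]$. Moreover, since $X$ is a smooth integral finite type $k$-scheme and $G$ is a linear algebraic group, the quotient stack $[X/G]$ is a smooth integral algebraic stack of finite type over $k$, so Proposition \ref{prop:extending} applies and tells us that restriction gives an equivalence between the category of line bundles on $[X/G]$ and the category of line bundles on $[U/G]$.

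Next I would invoke the standard dictionary identifying $G$-linearized line bundles on a $G$-scheme $Y$ with line bundles on the quotient stack $[Y/G]$, compatibly with morphisms; applying this to both $Y = X$ and $Y = U$, and noting that the restriction functor on $G$-linearized line bundles corresponds under this dictionary to the restriction functor $\Pic([X/G]) \to \Pic([U/G])$ (more precisely, to the restriction of the categories of line bundles), the corollary follows immediately from the equivalence obtained in the previous paragraph.

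The only real point requiring care — and the step I expect to be the main (minor) obstacle — is verifying that $[U/G]$ really is a \emph{big} open substack of $[X/G]$, i.e.\ that the codimension of the complement is preserved when passing to the quotient stack. This is handled, as elsewhere in the paper (cf.\ the proof of Lemma \ref{lem:codimension}), by pulling back along the smooth surjective presentation $X \to [X/G]$: the preimage of $[X/G] \setminus [U/G]$ is exactly $X \setminus U$, and since smooth morphisms preserve codimension and $\codim(X \setminus U, X) \geq 2$ by hypothesis, we get $\codim([X/G] \setminus [U/G], [X/G]) \geq 2$ as well. Everything else is a formal consequence of Proposition \ref{prop:extending} and the equivalence between equivariant line bundles and line bundles on the quotient stack.
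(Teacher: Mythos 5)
Your proposal is correct and is exactly the paper's argument: the paper's proof is the single line ``This follows from applying Proposition \ref{prop:extending} to $[U/G]\subseteq [X/G]$,'' and you have simply spelled out the implicit verifications (that $[U/G]$ is a big open substack of the smooth integral finite type stack $[X/G]$, and that $G$-linearized line bundles on a $G$-scheme correspond to line bundles on the quotient stack). No discrepancy to report.
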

\begin{proof}
	This follows from applying Proposition \ref{prop:extending} to $[U/G]\subseteq [X/G]$.
\end{proof}

Note that Theorems \ref{thm2} and \ref{thm3} are subsumed by the following result.
\begin{theorem}
 Let $\X$ be a smooth  abstract horospherical $G$-stack with dense open substack $G/H$ such that the diagonal of $\X$ is affine, and the geometric points of $\X$ have reductive inertia groups. Assume that the natural (right) action of the torus $\T=P/H$ on $G/H$ extends to $\X$. Then,
the following statements hold.
\begin{enumerate}
\item If  Conjecture \ref{conj} holds, then $\X$ is a horospherical $G$-stack.
\item If $\X$ has a Zariski-open covering by   horospherical $G$-stacks, then $\X$ is a horospherical $G$-stack.
\end{enumerate}
\end{theorem}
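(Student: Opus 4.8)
The plan is to reduce the general (not necessarily toroidal) case to the toroidal case treated in Theorem~\ref{thm:structure_theorem_for_toroidal}, using the toroidification of Proposition~\ref{prop:tor} together with the theory of Cox rings for horospherical varieties. Let $\X$ be as in the statement, with dense open substack $G/H$, torus $\T = P/H$ acting on $\X$, and set $\Y := [\X/\T]$. By Lemma~\ref{lem: X horo if X/T horo} it suffices to prove that $\Y$ is a horospherical $G$-stack (or, in case $(2)$, that the Zariski covering descends to a covering of $\Y$). First I would record that $\Y$ inherits affine diagonal, reductive (indeed, by Proposition~\ref{prop:inertia_groups_are_always_diag}, diagonalizable) inertia groups, and is again an abstract horospherical $G$-stack, now with dense open substack $G/P$; the passage to $\Y$ is what lets us later apply a criterion like Conjecture~\ref{conj} on a stack whose "generic orbit" is $G/P$ rather than $G/H$.

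The core of the argument is the construction of a ``Cox stack''. Applying the toroidification morphism $f : \X' \to \X$ of Proposition~\ref{prop:tor}, we get a smooth toroidal abstract horospherical $G$-stack $\X'$ (smooth because $f$ is representable birational and induces finite-index inclusions on inertia, so by Proposition~\ref{prop:codimension_one_case}-type reasoning, or directly, $\X'$ is smooth) to which Theorem~\ref{thm:structure_theorem_for_toroidal} applies: $\X'$ is a horospherical $G$-stack, say $\X' = [X'/K']$ with $X'$ a horospherical $G \times T'$-variety. Next I would pass to the universal torsor / characteristic space: using the theory of Cox rings of horospherical varieties (all $\B$-stable prime divisors, colors included, give the Cox ring a finite multigraded presentation), build a quasi-affine variety $\widehat{X'}$ with a diagonalizable group $\Gamma$ acting such that $X' = \widehat{X'}/\!\!/\Gamma$ and $\Cl(X') \cong \chi(\Gamma)$. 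The crucial point is then to push this construction back along $f$ to build a candidate ``Cox stack'' $\widehat{\X}$ over $\X$ itself (respectively over $\Y$), an algebraic stack with $\Pic(\widehat{\X}) = 0$, affine diagonal, diagonalizable inertia, containing a big open substack which is a quasi-affine scheme — the latter coming from the open orbit $G/H$ (respectively $G/P$) and the fact that the boundary divisors one removes have codimension one, so after removing the codimension $\geq 2$ locus one is left with a quasi-affine scheme via Corollary~\ref{cor: finite G orbits} and Corollary~\ref{cor:extending2}.

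With $\widehat{\X}$ in hand, I would invoke Conjecture~\ref{conj} (in case $(1)$) or Proposition~\ref{prop:conjquotientstack2}/Corollary~\ref{cor:second_case} (in case $(2)$, where the Zariski covering of $\X$ by horospherical $G$-stacks induces one of $\widehat{\X}$, after checking big-openness and $\Pic = 0$ on each piece via Lemma~\ref{lem:surj_of_pic}) to conclude that $\widehat{\X}$ is a \emph{quasi-affine scheme}. Since $\widehat{\X}$ has a $G \times (\text{torus})$-action with dense open orbit, normality forces it to be a horospherical variety; and then $\X = [\widehat{\X}/\Gamma]$ exhibits $\X$ as a horospherical $G$-stack, as desired. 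One must also check that the extra torus factor (coming from $\T$ and from the Cox torus $\Gamma$) acts faithfully, which is built into the construction, and invoke Lemma~\ref{lem: X horo if X/T horo} one last time to descend from $\Y$ back to $\X$.

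The main obstacle is the construction of the Cox stack $\widehat{\X}$ and the verification that it satisfies \emph{all} the hypotheses of Conjecture~\ref{conj} simultaneously: one needs $\Pic(\widehat{\X}) = 0$ (this is the defining property of a characteristic space and should follow from the exact sequences of \cite{KKV89} relating $\Pic$, characters, and $\Cl$, combined with Lemma~\ref{lem:chars_inject} and $\Pic(\X)$ being ``killed'' by the grading), affine diagonal (inherited through the torsor construction, using \cite[Lemma~3.3]{GS15II}-style arguments), diagonalizable inertia (from Proposition~\ref{prop:inertia_groups_are_always_diag}), and — most delicately — the presence of a \emph{big} open substack that is an honest quasi-affine \emph{scheme}. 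This last point is where the hypothesis that $\T$ acts on $\X$ is essential: it guarantees the Cox construction is available in a $G$-equivariant way with the right generic behavior, and where one must carefully track codimensions of boundary strata using Corollary~\ref{cor: finite G orbits} and Lemma~\ref{lem:codimension} to ensure that removing the locus of codimension $\geq 2$ leaves a genuine scheme rather than a stack with residual gerbe structure.
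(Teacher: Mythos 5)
Your overall strategy --- quotient by $\T$, produce a torus torsor over $\X$ with trivial Picard group and a big quasi-affine open via the Cox construction, and then feed it to Conjecture~\ref{conj} (resp.\ Corollary~\ref{cor:second_case}) --- is the right one and matches the paper. But the middle of your argument has a genuine gap: you toroidify all of $\X$ via Proposition~\ref{prop:tor} and assert that the resulting $\X'$ is smooth, so that Theorem~\ref{thm:structure_theorem_for_toroidal}(2) makes it a horospherical $G$-stack $[X'/K']$. There is no justification for this smoothness: $\X'$ is the normalization of the closure of a graph, and the ``Proposition~\ref{prop:codimension_one_case}-type reasoning'' you invoke requires every orbit closure to have codimension at most $1$, which fails on $\X$ in general --- indeed the whole difficulty of the non-toroidal case is the presence of orbit closures of codimension $\geq 2$. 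Moreover, even granting $\X' = [X'/K']$, ``pushing the Cox construction back along $f$'' is not a defined operation: $f$ is a proper birational morphism, and neither torsors nor line bundles descend along it.

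The paper's proof avoids both problems by never toroidifying $\X$ itself. After reducing to the case where $\T$ is trivial via Lemma~\ref{lem: X horo if X/T horo}, one removes from $\X$ the finitely many (Corollary~\ref{cor: finite G orbits}) orbit closures $\overline{G.x}$ of codimension $\geq 2$; the resulting $G$-stable \emph{big open} substack $\Y$ satisfies the hypothesis of Proposition~\ref{prop:codimension_one_case} and is therefore already a smooth toroidal horospherical $G$-stack $[Y_1/\T_1]$ --- no global toroidification is needed. The Cox construction is then performed on a smooth equivariant compactification $Y_2$ of $Y_1$ (after arranging $G$ to be a product of a torus and a simply connected semisimple group, so the action lifts), producing a quasi-affine variety $Y$ which is a torsor over $\Y$ under a torus $\T_3$. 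The correct mechanism for getting from $\Y$ to $\X$ is not descent along $f$ but the unique extension of ($G''$-linearized) line bundles across the codimension-$\geq 2$ complement (Proposition~\ref{prop:extending} and Corollary~\ref{cor:extending2}), which extends $Y \to \Y$ to a $\T_3$-torsor $X\to\X$ with $\Pic(X)=0$ and with $Y$ a big quasi-affine open subscheme of $X$; it is to this $X$ that the Conjecture (resp.\ Corollary~\ref{cor:second_case}) is applied. You gesture at this codimension-$\geq 2$ extension in your last paragraph, but your argument as structured routes through $\X'$ rather than through the big open $\Y\subseteq\X$, and that route does not close.
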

\begin{proof}   
Note that the (right) action of $\T=P/H$ on $G/H$ extends to the stack $\X$. (This is  part of our  assumption.)
Thus, we can apply  Lemma \ref{lem: X horo if X/T horo} and replace $\X$ by $[\X/\mathbb{T}]$. That is, we may and do assume that $\mathbb{T}=P/H $ is trivial, i.e., $H=P$. Moreover, replacing $G$ by a finite \'etale cover if necessary, we may and do assume that $G$ is a direct product of a torus and a simply-connected semisimple group.

Suppose that $\codim(\overline{G. x},\X)$ is at most $1$ for all $x$ in $\X(k)$. Then, the result follows from Proposition \ref{prop:codimension_one_case}. Therefore, to prove the theorem, we may and do assume that there exists $x \in \X(k)$ such that $\codim(\overline{G. x},\X)$ is at least $2$.

Define $\Z$ as the union of all closed substacks $\overline{G . x}$, where $x$ in $\X(k)$ runs over all points such that $\overline{G . x}$ is of codimension at least $2$. It follows from Corollary \ref{cor: finite G orbits} that $\Z$ is a $G$-stable closed substack of codimension at least $2$ in $\X$.

Let $\Y$ be the complement of $\Z$ in $\X$. Then, $\Y$ is a $G$-stable dense open substack of $\X$ such that $\codim(\overline{G.y},\Y)$ is at most $1$ for all $y \in \Y(k)$. Note that $\Y$ is a smooth abstract horospherical $G$-stack with dense open substack $G/H$. Therefore, by Proposition \ref{prop:codimension_one_case}, the stack $\Y$ is a smooth horospherical $G$-stack, i.e.,  there exist a smooth horospherical $G'$-variety $Y_1$ with open orbit $G'/H'$ and a diagonalizable subgroup $K_1 \subseteq \T_1:=P'/H'$ such that $\Y \cong [Y_1/K_1]$, where $G'=G \times T$ for some torus $T$ acting faithfully on $Y_1$ and contained in $K_1$. Since the open $G$-orbit of $\X$ is the flag variety $G/P$ (as $\mathbb T $ is trivial), we must have $K_1=\T_1$. 

 (We will now perform the Cox construction on $Y_1$; see \cite{ADHL15} for a general background on Cox rings. However, this is slightly more complicated than expected, as one needs to reduce to the situation in which there are only constant invertible global regular functions. Once this is done,  we will extend   the torsors over $\Y$ appearing below to $\X$ using that the complement is of codimension at least two. We will  then be in the situation of Conjecture \ref{conj}.)

	 Let $Y_2$ be a $G'$-equivariant smooth compactification of $Y_1$.  Then $\mathcal{O}(Y_2)^\times=k^\times$, and  $\Cl(Y_2)=\Pic(Y_2)$ is finitely generated and torsion-free; see \cite[Corollaries 3.2.5 and 3.2.6]{Per14}. Let $R(Y_2)$ be the Cox ring of $Y_2$, and let $Y_4:=\Spec R(Y_2)$. Note that $Y_4$ is  a normal (possibly singular) affine variety with trivial class group   \cite[Section 3.1]{Bri07}. The variety $Y_4$ comes with an action of a torus $\T_2$ whose group of characters identifies with $\Cl(Y_2)$, and there exists a $\T_2$-stable smooth quasi-affine subvariety $Y_3 \subseteq Y_4$ such that the $\T_2$-action on $Y_3$ is free and the quotient morphism $Y_3 \to Y_3/\T_2=Y_2$ is a $\T_2$-torsor. Moreover, the action of $G'$ on $Y_2$ lifts to $Y_3$ and $Y_4$ \cite[Theorem 4.2.3.2 and Section 4.5.4]{ADHL15}. (Here we use that $G$ is a direct product of a torus and a simply-connected semisimple   algebraic group, and that the class group of $Y_2$ is torsion-free.) Therefore $Y_3$ and $Y_4$ are horospherical $G''$-varieties, where $G''=G' \times \T_2=G \times T \times \T_2$, with open orbit $G''/H''$, say.
	 
	 We denote by $Y$ the inverse image of $Y_1$ in $Y_4$. Note that $Y$    is a smooth quasi-affine horospherical $G''$-variety. 
 
	  Let $\T_3=\T_2 \times \T_1$. The variety $Y$ is the total space of a $\T_3$-torsor over the stack $\mathcal Y=[Y/\T_3]$. Since $\mathrm{H}^1_{\mathrm{fppf}} (\Y, \Gm) = \Pic(\Y)$, the $\T_3$-torsor $Y$ corresponds to a direct sum of line bundles on $\Y$. As the codimension of $\Z=\X \setminus \Y$ in $\X$ is at least 2, such line bundles extend uniquely to $\X$ (Proposition \ref{prop:extending}). Therefore, as $\mathrm{H}^1_{\mathrm{fppf}} (\X, \Gm) = \Pic(\X)$, there exists a unique $\T_3$-torsor $X \to \X$ whose restriction over $\Y$ is $Y \to \Y$. As  $X$ is a  $\T_3$-torsor over $\X$, we see that $X$ is a smooth integral finite type algebraic stack with has affine diagonal and reductive inertia groups. Moreover, $Y$ is a smooth quasi-affine scheme with trivial Picard group and the complement of $Y$ in $X$ is of codimension at least $2$. Since $Y$ is a big open of $X$ with trivial Picard group, we see that $\Pic(X) =0$ (Proposition \ref{prop:extending}). 

Note that $G''$ acts on $Y$, and that that $G''$ acts on $\Y$ and $\X$ via the projection $G''\to G$. We now show that the  action of $G''$ on $Y$ extends to an action on $X$, so that $X$ is an abstract horospherical $G''$-stack.  We can view the $\T_3$-torsor $Y\to \Y$ as a direct sum of  $G''$-linearized line bundles on $\Y$. 
 However,      as $[\Y/G'']$ is a big dense open of $[\X/G'']$, it follows from Corollary \ref{cor:extending2}  that any $G''$-linearized line bundle on $\Y$ extends uniquely to a $G''$-linearized line bundle on $\X$. Therefore, the extension $X\to \X$ of the $\T_3$-torsor $Y \to \Y$ admits a compatible action of $G''$. Thus,  $X$ is an abstract horospherical $G''$-stack. In particular, every geometric point of  $X$ has a diagonalizable inertia group (Proposition \ref{prop:inertia_groups_are_always_diag}).

	The following diagram (whose squares are Cartesian) summarizes the situation so far:
	$$\xymatrix{
		Y_4=\Spec R(Y_2) & Y_3 \ar@{_{(}->}[l] \ar@{->>}[d]^{/\T_2} & Y \ar@{_{(}->}[l] \ar@{->>}[d]^{/\T_2}  \ar@{^{(}->}[r] & X \ar@{->>}[dd]^{/\T_2 \times \T_1} \\
		& [Y_3/\T_2]=Y_2  & Y_1 \ar@{_{(}->}[l]  \ar@{->>}[d]^{/\T_1} &  \\
		&  & \Y=[Y_1/\T_1] \ar@{^{(}->}[r] & \X
	}$$

Now, to prove $(1)$, note that  it follows from Conjecture \ref{conj} that $X$ is a quasi-affine scheme, and thus a smooth variety.  To prove $(2)$, note that $X$ is covered by horospherical $G$-stacks (as $\X$ is covered by horospherical $G$-stacks), so that $X$ is a smooth quasi-affine variety by Corollary \ref{cor:second_case}.
	
Hence, in both cases $(1)$ and $(2)$, we see that $X$ is a smooth horospherical $G''$-variety. Since  $\X = [X/\mathbb T_3]$, we see that $\X$ is a horospherical $G$-stack.
	\end{proof}
 
 \bigskip
 
 \noindent \textbf{Acknowledgements.}  We thank Matthieu Romagny for his help in writing Section \ref{section: group actions}. We thank Jarod Alper for his patience and numerous explanations, and his help in proving  Theorem \ref{thm2}.   We are grateful to Andrew Kresch for helpful discussions, and for providing us with several    examples (see Remark \ref{remark:kresch1}).
We thank Michel Brion, Adrien Dubouloz, Jack Hall, Andreas Hochenegger, Johannes Hofscheier, Elena Martinengo, Siddharth Mathur,  Martin Olsson, Matthew Satriano, David Rydh, and Angelo Vistoli for helpful and interesting discussions. We thank the referee for the helpful comments and for Remark \ref{rem:bottom_up}.
The first named author gratefully acknowledges support from SFB/Transregio 45.
The second named author is supported by the Heinrich Heine University. 
The research of the second author was conducted in the framework of the research training group GRK 2240: Algebro-geometric Methods in Algebra, Arithmetic and Topology, which is funded by DFG.
The  third named author is grateful to the Max-Planck-Institut f\"ur Mathematik of Bonn for the warm hospitality and support provided during the beginning of the writing of this paper. 
The authors are grateful to the Australian National University in  Canberra for its hospitality in March 2016 where part of this work was done.

\bibliographystyle{alpha}
\def\cprime{$'$}

\end{document}